\DeclareSymbolFont{eulerletters}{U}{zeur}{b}{n}
\DeclareMathSymbol{\eulw}{\mathord}{eulerletters}{`w}
\DeclareMathSymbol{\eulU}{\mathord}{eulerletters}{`U}
\DeclareMathSymbol{\eulO}{\mathord}{eulerletters}{`O}
\DeclareMathSymbol{\eulF}{\mathord}{eulerletters}{`F}
\newcommand{\qo}{\text{ a.e. }}
\newcommand{\uD}{{\rm D}}
\newcommand{\ud}{{\rm d}}
\newcommand{\abs}[1]{\vert {#1}\vert}
\newcommand{\wh}{\widehat}
\newcommand{\wt}{\widetilde}
\newcommand{\wb}{\boldsymbol}
\newcommand{\wq}{\wh q}
\newcommand{\wx}{\wh q}
\newcommand{\wxu}{\wh q_1}
\newcommand{\wpsi}[1]{\wh\psi_{#1}}
\newcommand{\R}{\mathbb{R}}
\newcommand{\wxi}{\widehat{\xi}}
\newcommand{\wu}{\widehat{u}}
\newcommand{\cV}{\mathcal{V}}
\newcommand{\cW}{\mathcal{W}}
\newcommand{\bcW}{\boldsymbol{\cW}}
\newcommand{\bcV}{\boldsymbol{\cV}}
\newcommand{\bJ}{\boldsymbol{J\se}}
\newcommand{\wla}{\widehat{\lambda}}
\newcommand{\wtau}{\widehat{\tau}}
\newcommand{\well}{\widehat{\ell}}
\newcommand{\cF}{{\mathcal F}} 
\newcommand{\cG}{{\mathcal G}} 
\newcommand{\wF}{H}
\newcommand{\vwF}{\vec{\wF}}
\newcommand{\cH}{{\mathcal H}} 
\newcommand{\cK}{{\mathcal K}} 
\newcommand{\vPhi}[1]{{\vec\Phi}^{#1}} %
\newcommand{\vF}[1]{{{\vec F}_{#1}}} %
\renewcommand{\AA}[2]{\big( L_{\dot{g^1_{#1}}} \big(\wh\gamma_T + \wS{T}^0 - \wS{#1}^0\big) + L_{g^0_{#1}}\wpsi{#1}\big)(#2) }
\newcommand{\RR}[1]{ 
	L_{ \liebr{\dot{g^1_{#1}}}{g^1_{#1}}   }  \big(\wh\gamma_T + \wS{T}^0 - \wS{#1}^0\big)(\wxu) + L_{\dot{g^1_{#1}}}\wpsi{#1}(\wxu) 
	-  L_{g^1_{#1}}L_{g^0_{#1}}\wpsi{#1}(\wxu)}
\newcommand{\Rtre}{\mathfrak{r}_3}
\newcommand{\attr}{\rho}
\newcommand{\Hex}{G\se}
\newcommand{\se} {^{\prime \prime}} %
\newcommand{\dl}{{\delta\ell}} %
\newcommand{\de}{{\delta e}} %
\newcommand{\dz}{\delta z} %
\newcommand{\wS}[1]{\widehat S_{#1}} %
\newcommand{\wSinv}[1]{\widehat S^{-1}_{#1}} %
\newcommand{\liebr}[2]{\left[  {#1}, {#2} \right] } %
\newcommand{\liede}[3]{ L_{#1}{#2} \big( {#3}\big) } %
\newcommand{\liededo}[3]{ L^2_{#1}{#2} \big( {#3}\big) } %
\newcommand{\liedede}[4]{ L_{#1}L_{#2}{#3}\big( {#4}\big)  } %
\newcommand{\lieder}[2]{L_{#1}{#2}(\wxu)} %
\newcommand{\liederder}[3]{ L_{#1}L_{#2}{#3}(\wxu)  } %
\newcommand{\liederdo}[2]{ L^2_{#1}{#2} ( \wxu ) } %
\newcommand{\scal}[2]{\langle {#1} \, , \, {#2} \rangle} %
\newcommand{\ep}{\varepsilon}
\newcommand{\cO}{{\mathcal O}} %
\newcommand{\cU}{{\mathcal U}} %
\renewcommand{\paragraph}[1]{\smallskip \noindent {\bf #1}}
\newtheorem{lemma}{Lemma}
\newtheorem{proposition}{Proposition}
\newtheorem{theorem}{Theorem}
\newtheorem{remark}{Remark}
\newtheorem{definition}{Definition}
\newtheorem{ass}{Assumption}
\numberwithin{equation}{section}
\numberwithin{theorem}{section}
\numberwithin{lemma}{section}
\numberwithin{proposition}{section}
\numberwithin{remark}{section}
\begin{document}

\title{Singular extremals in $L^1$-optimal control problems: sufficient optimality conditions}
\thanks{This work has been supported by  "National Group for Mathematical Analysis, Probability and their Applications" (GNAMPA-INdAM), by UTLN–Appel à projet “Chercheurs	invit\'es”, Universit\'e de Toulon, by Progetto Internazionalizzazione, Universit\`a degli Studi di Firenze and by CARTT - IUT de Toulon.}
\author{Francesca C. Chittaro}\address{Universit\'e de Toulon, Aix Marseille Univ, CNRS, LIS, Marseille, France}
\author{Laura Poggiolini}\address{DiMaI, Universit\`a di Firenze, 50139 Firenze, Italy}
%
%
\begin{abstract}
In this paper we are concerned with generalised $L^1$-minimisation problems, i.e.~Bolza problems involving the absolute value of the control with a control-affine dynamics. We establish sufficient conditions for the strong local optimality of extremals given by the concatenation of bang, singular and inactive (zero) arcs. The sufficiency of such conditions is proved by means of Hamiltonian methods. As a by-product of the result, we provide an explicit invariant formula for the second variation along the singular arc. 
 \end{abstract}
\subjclass{49J15,49J30,49K30}
\keywords{sufficient optimality conditions, control-affine systems, singular control, $L^1$ minimisation, minimum fuel problem}

\maketitle

\section{Introduction}

$L^1$-minimisation problems, that is, optimal control problems aiming to minimise the $L^1$ norm of the control, have shown to model  quite effectively fuel consumption optimisation problems in engineering \cite{ChenCC16,Chen16,Ross06,Sakawa99} and some systems appearing in neurobiology \cite{Berret08}. In a recent paper dealing with the problem of minimising the  fuel consumption of an academic vehicle \cite{boizot}, 
a generalised version of this problem is studied:  the cost to be minimised is  the {\it absolute work}, modelled as the integral of the absolute value of the control, weighted
by the absolute value of a function dependent on the state (such kind of problems have been called \emph{generalised $L^1$ optimal control problems}
in \cite{CP19}).

Besides the well known solutions given by concatenations of singular and bang arcs, generalised $L^1$-optimal control problems are known to present
a new category of extremal arcs, in which the control 
 is neither  singular nor bang, that is, its value is uniquely determined by Pontryagin Maximum Principle, but 
is not an extremum point of the control set.
In these arcs the control is identically zero, 
so they are commonly called {\it zero arcs, zero thrust arcs, inactivated arcs} or {\it cost arcs}.
The property of $L^1$ minimisation of generating zero arcs is 
well known and exploited in practical situations, see for instance 
\cite{NQN}.

The Pontryagin Maximum Principle, suitably generalised for non-smooth frameworks (see e.g. \cite{Cla83}), provides
a set of necessary conditions for the optimality of admissible trajectories. The aim of this paper is to give a set of sufficient conditions
for the optimality of admissible trajectories that satisfy the Pontryagin Maximum Principle and whose associated extremal contains bang, singular and
inactivated arcs.

Before proceeding with further discussion, let us state the problem in detail. Let $M$ be a smooth $n$-dimensional manifold, $\psi \colon M\to \mathbb{R}$ a smooth function, $f_0$, $f_1$
 two smooth vector fields on $M$.  Finally let $T>0$ be fixed and let $q_0$, $q_T$ be two given points in $M$.
Consider a Bolza optimal control problem of the following form:
\begin{equation} \label{eq: cost}
	\mbox{minimise} \int_0^T |u(t) \psi(\xi(t))| \;\ud t 
\end{equation}
over all absolutely continuous trajectories that are Carath\'eodory solutions of the boundary value problem  {\it (admissible trajectories)} 
\begin{equation} \label{eq: contr sys}
	\begin{cases}
		\dot{\xi}(t) = \left( f_0 +  u(t) f_1 \right) \circ \xi(t) ,  \\
\xi(0)= q_0, \quad \xi(T) = q_T, \\
u \in L^\infty([0,T], [-1,1]).
	\end{cases}
\end{equation}
For a solution $\wxi$ of the Cauchy problem~\eqref{eq: contr sys}, according to \cite{ASZ98b}, we adopt the following notion of optimality.
\begin{definition}[Strong local optimality]
	The trajectory $\wxi$ is a strong local minimiser of problem \eqref{eq: cost}-\eqref{eq: contr sys}  if there
	exists a neighbourhood $\eulO$ of its graph in $\R \times M$  such that $\wxi$ is a minimiser among
	the admissible trajectories whose graphs are contained in $\eulO$ , independently of the
	values of the associated control function. We say that $\wxi$ 
	is a strict   strong local minimiser if it is the only minimising trajectory whose graph is  in $\eulO$.
\end{definition}

Pontryagin Maximum Principle states that, if $\wxi$ is locally optimal, then it must be an {\em extremal trajectory}, 
i.e.~the projection on the manifold $M$ of  the solution of a suitable Hamiltonian system defined on the cotangent bundle $T^*M$,
see e.g.~\cite{AS04,Cla83}. If we limit ourselves to consider normal extremals only, then such Hamiltonian system
is determined by the  values of the two Hamiltonian functions $\Phi^{\pm}(\ell) = 
\scal{\ell}{f_1(\pi\ell)} \pm|\psi(\pi\ell)|,\ \ell \in T^*M$, whose role is
analogous to the one played by the switching functions in 
smooth control-affine  optimal control problems. 
In particular, when neither of the two functions vanishes, 
the maximised Hamiltonian is realised by one and only one admissible control value, 
which, according to the sign of these two functions, is either $\pm 1$ or $0$. 
In the first case, in analogy with the control-affine case, we say that we have a {\em regular bang control},  
while in the latter case we say that we have a {\em zero} or {\em  inactivated control}; 
on the other hand, if  either $\Phi^+(\ell)$ or $\Phi^-(\ell)$ vanishes along a nontrivial arc of a Pontryagin extremal, 
then only the sign of the control is prescribed, and we say that we have a {\em singular control}. 

In \cite{boizot} the authors consider a specific parameter-dependent problem fitting in the class of problem 
\eqref{eq: cost}--\eqref{eq: contr sys}. They show that, according to the values of the parameters, the optimal extremal trajectories are
given by the concatenations of bang-zero-bang arcs or 
of bang-singular-zero-bang arcs.  Inspired by this result, we look for sufficient optimality conditions for such extremals: in  \cite{CP17,CP19}, we 
focus on extremals made by
a concatenation of bang-zero-bang arcs; 
here we consider the case of a concatenation of  bang-singular-zero-bang arcs and provide an adequate set of sufficient conditions.

Our approach, successfully applied also for other classes of problems (see for instance \cite{ASZ98a,ASZ02b,PS04,PS11,PSp15,CS16,SZ16}) 
relies on Hamiltonian methods. 
The main steps in these methods are the following:
\begin{itemize}
	\item  if the flow generated by the maximised Hamiltonian is well defined, and the maximised Hamiltonian is at least $C^2$,   find a Lagrangian submanifold $\Lambda_1$ of the cotangent bundle that projects injectively onto the base manifold, and such that 
at each time $t \in [0,T]$ its image under the flow generated by the maximised Hamiltonian projects 
one-to-one onto a neighbourhood of $\wxi(t)$ in the base manifold; 
\item thanks to  the local invertibility of this flow, lift  all admissible trajectories, with graph belonging to the neighbourhood of the graph of the reference one, to $\Lambda_1$; 
\item estimate the cost associated with every trajectory,  by means of a line integral  in $\Lambda_1$. 
\end{itemize}

As long as this construction is possible  - that is, as long as the image of the manifold via the 
flow generated by the maximised Hamiltonian is invertible  - it is possible 
to show that the reference trajectory is indeed a local minimiser.

In the series of papers \cite{ASZ02b, PS04, Ste07, CS10, PS11}, the existence of a suitable manifold $\Lambda_1$ is shown to be related to the coerciveness of the second variation associated with some sub-problem of the original one. Indeed, the second variation is written as an accessory linear-quadratic control problem on the tangent space; by the classical theory of linear-quadratic systems (see e.g.  \cite[Theorem 2.6]{SZ97}), if the second variation is coercive, then the image, under the linearised flow, of the space of 
transversality conditions of the accessory problem projects injectively onto the base manifold. A good candidate for the manifold $\Lambda_1$ is 
thus given by the image of such 
manifold of transversality conditions, under a suitably defined symplectic (or anti-symplectic) isomorphism. 

We stress that one of the main strengths of this approach relies on the fact that all trajectories whose graph is close to the reference one can be lifted to 
$\Lambda_1$, regardless of the value of the associated control, thus yielding optimality in the strong topology.

In the case of bang-bang extremals, the maximised Hamiltonian is not $C^2$. Nevertheless, the Hamiltonian methods described above can be applied  with minor adjustments, see e.g.~\cite{ASZ02b, PS04, Pog06, PSp08, PSp11}. 
When the extremal is singular or is the concatenation of bang and singular arcs, then a more sophisticated construction is required. 
In particular, it is no longer possible to use the flow associated with the maximised Hamiltonian of the control system 
 (see Section~\ref{sec:  over max Ham} for details).  However, 
as observed for the first time 
in \cite{Ste07}, Hamiltonian methods work even if the maximised Hamiltonian is replaced by a suitable {\em over-maximised Hamiltonian}. In the present 
paper, this construction is made possible thanks to a set of \emph{regularity assumptions} (Assumptions~\ref{ass:  nonzero at switch}-\ref{ass:  sglc}), holding along the reference trajectory.

The problem under study presents another tough issue:  indeed, we are dealing with a Bolza problem containing a singular arc. As it happens, for instance,
for singular extremals of the minimum-time problem or of a Mayer problem, the second variation is degenerate, thus not coercive. This problem can be 
surmounted by means of a Goh transformation, which provides a non-degenerate second variation, defined on a larger Hilbert space. Moreover, 
since we are dealing with a Bolza problem,
the construction of such second variation is particularly elaborate and, 
up to the authors' knowledge, this is the first time it is computed, at least in the invariant form we are using.

The paper is organised as follows: in Section~\ref{sec:notation} we state the notations that we are going to use throughout the paper; in Section~\ref{sec:regularityass} we state the first part of our assumptions. In Section~\ref{sec:extsecvar} we define the extended second variation. In Section~\ref{sec:overmaxi}, by using the regularity assumptions, we construct the over-maximised flow and in Section~\ref{sec:invertibility} we prove that the projected over-maximised flow emanating from an appropriate Lagrangian manifold is locally invertible. Finally, in Section~\ref{sec:  main results} we state and prove the main result of the paper. The result is illustrated in Section~\ref{sec:example} with an example for which explicit analytical computations can be done.  For the sake of readability, some technical computations are postponed to the Appendices.

\section{Notations}\label{sec:notation}

We denote with $TM$  and with $T^*M$ the tangent bundle and the cotangent bundle to $M$, respectively.
$\pi$ denotes the canonical projection of $T^*M$ on $M$;  the elements of $T^*M$ are denoted with $\ell$. 

In the following, small letters $f,g,k$ denote vector fields on the manifold $M$, and the corresponding capital letters are used to denote
the corresponding Hamiltonian lift, i.e. $F(\ell)=\langle \ell,f(\pi \ell)\rangle$.
Given a vector field $f$ on $M$, the  Lie derivative at a point $q\in M$ of  a smooth function $\varphi \colon M \to \R$ with respect to $f$ is denoted with
$L_f \varphi(q)= \langle d\varphi(q),f(q)\rangle$, and $L_f^2\varphi(q)=L_f\big(L_f\varphi\big)(q)$.
The Lie bracket of two vector fields $f,g$ is denoted as commonly with $[f,g]$.
In particular, for Lie brackets of indexed vector fields as $f_0,f_1$, we adopt the following notations:
\begin{equation}
f_{ij}=[f_i,f_j],\qquad f_{ijk}=[f_i,f_{jk}]. 
\end{equation}
Analogously, $F_{ij}(\ell)=\langle \ell,f_{ij}(\pi \ell)\rangle$ and $F_{ijk}(\ell)=\langle \ell,f_{ijk}(\pi \ell)\rangle$.

The symbol $\varsigma$ denotes the Poincar\'e-Cartan invariant on $T^*M$,  defined as  $\varsigma_{\ell}=\ell\circ \pi_* \ \forall \ell\in T^*M$.
The symbol $\boldsymbol{\sigma}_\ell =d\varsigma_\ell$ denotes the canonical symplectic form on $T^*M$. 
With each Hamiltonian function $F$ we associate the Hamiltonian vector field $\vec{F}$ on $T^*M$
defined by 
\begin{equation}
\scal{d F(\ell)}{\cdot} = \boldsymbol{\sigma}_{\ell}(\cdot,\vec{F}(\ell)).
\end{equation}
In this paper, a special role is played by the switching time $\wtau_1$ between the first bang arc and the singular one. 
Thus, we shall always consider flows starting from time $\wtau_1$,  evolving backwards in time up to time $t=0$  or forward in time up to time $t=T$. 
Capital cursive letters are used for the Hamiltonian flows associated with some Hamiltonian vector fields: for instance, $\cF_t$ denotes
the  flow,  from time $\wtau_1$ to time $t$, associated with the Hamiltonian vector field $\vec{F}$.

\smallskip
Throughout the paper, the superscript $\widehat{\cdot}\, $ is used for objects related to the reference trajectory $\wh{\xi}$. In particular,
$\wu(t)$ denotes the control associated with $\wxi$, $\widehat{f}_t$ the vector field $f_0+\wu(t) f_1$ and 
$\wh S_t$ its flow, i.e. $\wh S_t(q)$ is the solution at time $t$ of the Cauchy problem
\begin{equation}
\begin{cases}
\dot{\xi}(t)=\wh{f}_t\circ \xi(t),\\
\xi(\wtau_1)=q.
\end{cases}
\end{equation}
Analogously, $\widehat{F}_t(\ell)=\scal{ \ell}{\wh{f}_t(\pi\ell)}$ is the Hamiltonian associated with $\wh{f}_t$, and 
$\wh{\cF}_t$  denotes its associated  Hamiltonian flow.
Finally, we define the function $\wpsi{t}\colon M\to \R$  as $\wpsi{t}=\psi\circ \wh{S}_t$. 

\section{Regularity assumptions}\label{sec:regularityass}
 We consider an admissible trajectory  $\wxi$  of the control system~\eqref{eq: contr sys}
whose associated control $\wu$ has the following structure:
\begin{equation} \label{eq: struttura controllo}
\wu(t)=
\begin{cases}
1 & t\in(0,\wtau_1), \\
\wu_S(t)\in (0,1) & t\in(\wtau_1,\wtau_2),\\
0 & t\in(\wtau_2,\wtau_3),\\
-1 & t\in(\wtau_3,T).
\end{cases}
\end{equation}
where  $0<\wtau_1<\wtau_2<\wtau_3<T$.
$\wxi$ is called the \emph{reference trajectory}. The times $\wtau_i,\ i=1,2,3$ are called
 \emph{(reference) switching times} and,
analogously, the points  $\wx_i=\wxi(\wtau_i)$ are called \emph{(reference) switching points}.

This section is devoted to the statement of the necessary conditions for optimality and the discussion of the regularity assumptions along the reference trajectory.

\begin{ass} \label{ass:  nonzero at switch}
	Along the reference trajectory,	the cost $\psi$ does not vanish for $t\in[\wtau_1,\wtau_2]$ and 
	$t=\wtau_3$. Without loss of generality, we assume that $\psi\circ\wxi|_{[\wtau_1,\wtau_2]}>0$.
\end{ass}

\begin{ass} \label{ass:  non vanish}
For every  $t\in[0,T]$ such that $\psi(\wxi(t))=0$, it holds $L_{\dot{\wxi}(t)}\psi(\wxi(t))\neq 0$.  
\end{ass}
Assumptions~\ref{ass:  nonzero at switch}-\ref{ass:  non vanish} deal with the behaviour of the
function $\psi$; in particular, a direct consequence of Assumption~\ref{ass:  non vanish} is that  
the zeroes of the cost function $\psi$ along $\wxi$ are isolated, thus finite.
Since the zeroes of $\psi$ are the only non-smoothness points of the  
running cost, we can apply a classical version of PMP. Indeed,  
due to  non-smoothness, 
more general versions of 
PMP (see, e.g.,  \cite[Theorem 22.2]{Cla83}, \cite{kipka}) would be required. Nevertheless, thanks to Assumption~\ref{ass:  non vanish}, we can 
 rearrange
the optimal control problem into
a hybrid control problem, as defined in \cite[Section~22.5]{Cla83}, setting the  surface
$S=\{(t, x, y)  \colon   \psi(x) = 0, y = x\}$ as switching surface.  The analogue of the classical PMP for hybrid optimal control problems is 
the Hybrid Maximum Principle (see \cite[Theorem 17.4.1]{Sussmann99} and \cite[Theorem~22.26]{Cla83}) which, under Assumption~\ref{ass:  non vanish},  reduces to the standard smooth version of PMP,  see\cite{AS04}.
For this reason, here below we refer to the classical notions of Pontryagin extremal and PMP.

\begin{definition}[Pontryagin extremals]
For every $u \in [-1,1]$ and for $p_0 \in \{0,1 \}$, we consider the pre-Hamiltonian function associated with the optimal control problem \eqref{eq: cost}-\eqref{eq: contr sys}
\begin{equation} \label{eq: preHam}
h(\ell, u)  := F_0(\ell) + u F_1(\ell) - p_0 \abs{u \, \psi(\pi\ell)},
\end{equation}	
and we define the maximised Hamiltonian as
\begin{equation} \label{eq: Hmax}
H_{\max}(\ell)= \max_{u\in[-1,1]} h(\ell,u).
\end{equation}
Let $\lambda \colon [0,T] \to T^*M$ be an  absolutely continuous curve  such that $\psi(\pi\lambda(t))$  vanishes only for a finite number of times $0< t_1 < \ldots < t_k <T $. 
$\lambda$ is called a Pontryagin extremal of problem \eqref{eq: cost}-\eqref{eq: contr sys},  if there exist a constant $p_0 \in \{ 0, 1\}$ and an admissible control $u(t)$ (called {\em extremal control}) such that
	\begin{subequations} \label{PMP}
	\begin{alignat}{2}
        & (\lambda(t),p_0) \neq 0, \qquad && \ \forall \ t \in [0,T], \label{PMP nonzero} \\
	& \dot\lambda(t) = \vec{ h}(\lambda(t), u(t)), \qquad && \qo t \in [0,T], \label{PMP equation} \\
	&  h(\lambda(t), u(t)) = H_{\max} (\lambda(t)), \qquad && \qo  t \in [0,T],  \label{PMP max} \\
	& \pi\lambda(0) = q_0, \quad  \pi\lambda(T) =q_T. \label{PMP endpoints}	
	\end{alignat}
	\end{subequations}

If $p_0 =1$, then $\lambda$ is called a {\em normal Pontryagin extremal}, if $p_0 = 0$ we say that $\lambda$ is an {\em abnormal Pontryagin extremal}.
	\end{definition}

As discussed in the Introduction, in the case of normal extremals, the Hamiltonian functions
\begin{equation}
\Phi^\pm(\ell)  := F_1(\ell) \pm \abs{\psi(\pi\ell)} 
\end{equation}
play the same role as the switching functions do in control-affine problems. Indeed, if both $\Phi^-$ and $\Phi^+$ are non-zero along an extremal, then  the extremal control is uniquely determined by equation~\eqref{PMP max}: in particular, it is zero if $\Phi^-$ and $\Phi^+$ have 
different signs, it is $+1$ if both are positive, and it is $-1$ if both are negative.
If only one between $\Phi^-$ and $\Phi^+$ is zero, then PMP prescribes only the sign of the extremal control. The last case, where both $\Phi^-$ and $\Phi^+$
are zero, is highly degenerate and in this case equation~\eqref{PMP max} gives no information about the value of the extremal control.
For these reasons, $\Phi^-$ and $\Phi^+$ are called \emph{switching functions}. Subarcs of a Pontryagin extremal are thus classified according to the signs of the switching functions.

\begin{definition} \label{def: reg}
Let $\lambda \colon [0,T]\to T^*M$ be a normal Pontryagin extremal for Problem~\eqref{eq: cost}-\eqref{eq: contr sys}, and let $I\subset[0,T]$ be an open interval. 

If $\Phi^-(\lambda(t))\Phi^+(\lambda(t))>0$ for every $t\in I$, then $\lambda|_{I}$ is called a \emph{regular bang arc}.  

If $\Phi^-(\lambda(t))\Phi^+(\lambda(t))<0$ for every $t\in I$, then $\lambda|_{I}$ is called an \emph{inactivated} or \emph{zero arc}. 

If $\Phi^-(\lambda(t))\Phi^+(\lambda(t))=0$ for every $t\in I$,
then $\lambda|_{I}$ is called a \emph{singular arc}. 

In particular, if  one between $\Phi^-(\lambda(t))$ and $\Phi^+(\lambda(t))$ is different from zero for every $t \in I$, then  $\lambda|_I$ is a \emph{ non-degenerate singular arc}. Else we call it \emph{degenerate}.
\end{definition}

\begin{ass} \label{ass:  struttura arco}
There exists a normal Pontryagin extremal $\wla$ associated with the reference control $\wu$ such that $\pi\wla(t) =\wxi(t)$ for every $t \in [0,T]$. 
We assume that $\wla|_{[0,\wtau_1)}$ and $\wla|_{(\wtau_3,T]}$ are regular bang arcs, that $\wla|_{(\wtau_1,\wtau_2)}$ 
is a non-degenerate~\footnote{  We  point out that degenerate singular arcs occur if and only if for some $t \in (\wtau_1, \wtau_2)$ $F_1(\lambda(t)) = \psi(\pi\lambda(t)) = 0$; for the extremal $\wla$ this situation  is precluded by Assumption \ref{ass:  nonzero at switch}.} singular arc, and that
$\wla|_{(\wtau_2,\wtau_3)}$ is an inactivated arc.
\end{ass}

\noindent
{\bf Notation. } {\it
We set $\well_i=\wla(\wtau_i)$, for $i=0,1,2,3$,  
and $\well_T=\wla(T)$.

Moreover, we define the following constants:
\begin{equation}
a_i=\mbox{sign}\big(\psi(\wx_i)\big),\ i=1,2,3. 
\end{equation}
Thanks to Assumption~\ref{ass:  nonzero at switch}, it follows that $a_1=a_2=1$.
}

\smallskip

\begin{remark}
	We recall  that, if the reference extremal is optimal, then it must satisfy PMP and the switching functions must satisfy the mild version of 
the inequalities appearing in Definition~\ref{def: reg}; the only additional requirements in Assumption~\ref{ass:  struttura arco} are the regularity of the arcs and
the fact that the extremal is normal.
\end{remark}%

\begin{remark} \label{rem: zeropsi}
Assumption~\ref{ass:  nonzero at switch} ensures that, in a neighbourhood of $\wla([0,T])$, the switching surfaces $\{\Phi^+=0\}$ and $\{\Phi^-=0\}$ 
do not intersect each other.
Together with the fact that the zeroes of $\psi$ along $\wxi$ are finite, this fact
guarantees that the Hamiltonian vector field associated with $\wh F_t$ 
is well defined along the reference trajectory, but for at most a finite number of times, that is, the switching times and 
the  zeroes of $\psi(\pi\wla(t))$. 
\end{remark}

Assumption~\ref{ass:  struttura arco} and equation \eqref{eq: struttura controllo} imply the following conditions on the sign of the switching functions along the reference extremal:
\begin{alignat}{2}
\Phi^-(\wla(t))&> 0  \qquad &&t\in [0,\wtau_1), \label{eq: bang 1 reg}\\
\Phi^-(\wla(t))&= 0  \qquad &&t\in [\wtau_1,\wtau_2],\label{eq: sing} \\
 \Phi^-(\wla(t))< 0&< \Phi^+(\wla(t)) \qquad &&t\in (\wtau_2,\wtau_3), \label{eq: zero} \\
\Phi^+(\wla(t))&<0 \qquad &&t\in(\wtau_3,T]. \label{eq: bang 4 reg}
\end{alignat}  
Equations~\eqref{eq: bang 1 reg}--\eqref{eq: bang 4 reg} yield a set of \emph{higher order necessary conditions}. Indeed, combining equations~\eqref{eq: bang 1 reg} 
and \eqref{eq: sing}, 
we obtain that 
$\frac{d}{dt}\Phi^-(\wla(t)) \vert_{t = \wtau_1^-} \leq 0$, while \eqref{eq: sing} gives
$\frac{d}{dt}\Phi^-(\wla(t))=\frac{d^2}{dt^2}\Phi^-(\wla(t))=0$  for every $t \in(\wtau_1,\wtau_2)$. Explicit computations show that $\frac{d}{dt}\Phi^-(\wla(t)) \vert_{t = \wtau_1^\pm}=\{F_0,\Phi^-\}(\well_1)$. 
By continuity 
this implies that $\left.\frac{d}{dt}\Phi^-(\wla(t)) \right\vert_{t = \wtau_1}=0$,
so that we must have $\frac{d^2}{dt^2}\Phi^-(\wla(t))\vert_{t = \wtau_1^-} \geq 0$.

Analogously, from equations \eqref{eq: sing} and \eqref{eq: zero}, we obtain that
$\frac{d^2}{dt^2}\Phi^-(\wla(t))|_{t=\wtau_2^+}\leq 0$.

At time $\wtau_3$, 
$\Phi^+(\wla(t))$ is differentiable and changes sign from positive to negative, thus $\frac{d}{dt}\Phi^+(\wla(t))|_{t=\wtau_3}\leq 0$.

The regularity assumptions at the switching points consist in a strengthening of the above inequalities.

\begin{ass}[Regularity at the switching points.] \label{ass:  reg switch}
\begin{align}
 \left( F_{001} + F_{101} \right)(\well_1) 
+ a_1\liede{f_{01}}{\psi}{\wxu} - a_1\liedede{f_0 + f_1}{f_0}{\psi}{\wxu}=\frac{d^2}{dt^2}\Phi^-(\wla(t))|_{t=\wtau_1^-} &>0 \label{eq: reg switch 1} \\
 F_{001}(\well_2) -a_2 \liededo{f_0}{\psi}{\wx_2}=\frac{d^2}{dt^2}\Phi^-(\wla(t))|_{t=\wtau_2^+} 
&<0 \label{eq: reg switch 2} \\
\Rtre= F_{01}(\well_3)+a_3L_{f_0} \psi(\wx_3)=\frac{d}{dt}\Phi^+(\wla(t))|_{t=\wtau_3}  &<0.
\label{eq: reg switch 3}
\end{align}
\end{ass}

A well-known  second order necessary optimality condition concerning singular arcs is given by the Generalised Legendre condition (see for instance 
\cite[Theorem~20.16]{AS04}), which in our context reduces to 
\begin{equation}
F_{101}(\wla(t)) + a_1\big( \liede{f_{01}}{\psi}{\wxi(t)} - \liedede{f_1}{f_0}{\psi}{\wxi(t)}\big) \geq 0.
\end{equation}
We assume that the inequality here above holds in the strict form.
\begin{ass}[Strong generalised Legendre Condition (SGLC)]\label{ass:  sglc}
For all $t\in[\wtau_1,\wtau_2]$ 
\begin{equation}\label{SL}
F_{101}(\wla(t)) + a_1\big( \liede{f_{01}}{\psi}{\wxi(t)} - \liedede{f_1}{f_0}{\psi}{\wxi(t)}\big)>0 . 
\end{equation} 
\end{ass}
For the purpose of future computations, we introduce the following notation:
\begin{equation}
\mathbb{L}(\ell) =F_{101}(\ell) + a_1\big( \liede{f_{01}}{\psi}{\pi\ell} - \liedede{f_1}{f_0}{\psi}{\pi\ell}\big) \qquad \ell\in T^*M,
\end{equation}
so that equation~\eqref{SL} reads $\mathbb{L}(\wla(t))>0,\ t\in[\wtau_1,\wtau_2]$.

Assumption \ref{ass:  sglc} yields some geometric properties of two subsets 
of $T^*M$ which are crucial for our construction:
\begin{gather}
\Sigma^- =\{\ell\in T^*M  \colon  \Phi^-(\ell)=0\}, \label{Sigma} \\
S^- =\{\ell\in\Sigma^-  \colon  F_{01}(\ell)-L_{f_0}\psi(\pi\ell)=0 \}. \label{esse}
\end{gather}
Indeed, thanks to Assumption~\ref{ass:  sglc}, it is easy to see that, in a neighbourhood of  $\wla([\wtau_1,\wtau_2])$, $\Sigma^-$ is a codimension one 
embedded submanifold of $T^*M$. 
Moreover, Assumption~\ref{ass:  sglc} implies that $\vPhi{-}$ is not tangent to $S^-$, so that $S^-$ is a codimension one embedded submanifold of $\Sigma^-$.
We finally notice that, for every $\ell\in S^-$, the tangent space to $\Sigma^-$ at $\ell$ splits in the following direct sum
\begin{equation} \label{eq: tangent split}
T_{\ell}\Sigma^- =T_{\ell}S^-\oplus \R \vPhi-(\ell).
\end{equation}

\medskip
We end the section with a technical assumption concerning again the zeroes of the running cost along the reference trajectory. Indeed, 
as already stressed, the  flow generated by the maximised Hamiltonian, appearing in \eqref{PMP equation}-\eqref{PMP max},  depends on how many times  the function $\psi$ changes sign along the reference trajectory, as each of these points bears a non-smoothness point. 
This issue has been accurately treated in \cite{CP19}, and the same computations carried out there could extend with no modifications to the current problem. 
Thus, due to the complexity introduced by the presence of a singular arc, and in order to simplify the presentation, we make the following assumption. 
\begin{ass} \label{ass:  segni}
	Along the first bang-arc,  the singular and the last bang arc, the function $\psi$ is positive.
\end{ass}

\begin{remark}
We stress that this assumption does not cause any loss of generality;  the result (Theorem~\ref{th: main result}) holds true also if we drop it, provided that the other assumptions are satisfied and that the second variation
and the maximised flow are suitably computed, according to the rules given in \cite{CP19}. 	
\end{remark}

\section{The second variation}\label{sec:extsecvar}
Following the approach initiated in \cite{ASZ98a}, we write the second variation as an accessory problem,
that is, an LQ optimal control problem defined on the
tangent space to $M$ at $\wxu$.\footnote{ In principle, the basepoint could be any point of the reference trajectory. The choice of $\wxu$ considerably
simplifies the expression of the second variation, as it permits to neglect variations along the first bang arc.}
The admissible control functions of the accessory problem are the admissible control variations of the original optimal control problem, that is, all 
functions $\delta u\in L^{\infty}([0,T],\mathbb{R})$ such that $\wu+\delta u$ is still an admissible control for  problem \eqref{eq: contr sys};
the set of admissible control variations is then completed as a suitable subspace of the Hilbert space $L^2([0,T],\mathbb{R})$ 
(see \cite[Remark~6]{ASZ98a}).
However, facing the problem from the most general point of view, that is, considering all possible admissible variations, is not only cumbersome, 
but possibly pointless: in many general cases, indeed, the space of admissible variations is ``too big'', so that the second variation cannot
be coercive on it (see for instance \cite{PS12} for bang-singular concatenations and \cite{CS16} for an example in the case of totally singular extremals). 
On the other hand, in many cases (\cite{ASZ02b,PS11,CS16}) it has been proved that it is possible to properly reduce 
the set of admissible variations, and still  obtain sufficient conditions for optimality  in terms of the coerciveness of the second variation. 
The goal is then to find the ``smallest'' space of admissible variations such that the coerciveness of the second
variation on it still implies that the Hamiltonian flow is invertible.

In particular, in \cite{ASZ02b} it has been shown that, for bang-bang extremals, it is sufficient to 
consider only the variations of the switching times. As it will be proved in the paper, it turns out that, for problem~\eqref{eq: cost}-\eqref{eq: contr sys}, an appropriate  set of admissible variations is constituted by the variation of the third switching time and the variation of the control function along the singular arc. 

 Clearly, this reduction  considerably simplifies the expression of the second variation. 
The rest of this section is devoted to its construction.
 
\smallskip
It is well known that computing higher order derivatives on manifolds is a delicate task, as they are not invariant under change of coordinates. To overcome this problem
and obtain an intrinsic expression of the second variation, we transform the original problem into  a Mayer one and we pull-back the linearisation of the system along the reference trajectory
to the tangent space $T_{\wxu}M$.
Setting
$\wb{\xi}=(\xi^0,\xi)\in \R\times M$, problem~\eqref{eq: cost}-\eqref{eq: contr sys} reads
\begin{equation}
\mbox{minimise }\xi^0(T)-\xi^0(0)
\end{equation}
 among all solutions of the control system
\begin{equation} \label{eq: contr sys ext}
\left\{
\begin{array}{l}
\dot{\xi^0}(t) =|u(t) \psi(\xi(t))|,\\
\dot{\xi}(t) = \left( f_0 +  u f_1 \right) \circ \xi(t),\\ 
\wb{\xi}(0) =(0,\wx_0), \quad \wb{\xi}(T)\in \R\times\{\wx_T\} ,\\
u\in [-1,1].
\end{array} 
\right. 
\end{equation}
It is immediate to see that the covector $\wb{\lambda}=(\lambda^0,\wla)\in \R\times T^*M$, with
$\lambda^0(t) \equiv -1$ satisfies normal PMP. 

The reference flow from time $\wtau_1$ associated with the system \eqref{eq: contr sys ext} is denoted as $\wb{\wS{t}}$, and is given by
\begin{equation} \label{S cap}
\wb{\wS{t}}(c_1,q)=\begin{pmatrix}
                                         \wS{t}^0(c_1,q)\\\wS{t}(q) 
                                         \end{pmatrix}
=
\begin{pmatrix}
                                         c_1+\int_{\wtau_1}^t |\wu(s)\psi(\wS{s}(q))|\ud s
                                         \\\wS{t}(q) 
                                         \end{pmatrix}
=
\begin{pmatrix}
                                         c_1+\int_{\wtau_1}^t |\wu(s)\wpsi{s}(q))|\ud s
                                         \\\wS{t}(q) 
                                         \end{pmatrix}.
                                         \end{equation}
  \begin{remark}\label{re:notazione}
	Notice that $\wS{t}^0$ does depend on $c_1$, while its differential  does not. In what follows, with some abuse of notation, we write $\ud \wS{t}^0(q)$ for the differential of $q \mapsto \wS{t}^0(c_1, \cdot)$ at a point $q$.
\end{remark}  

Consider some  $\tau_3\in(\wtau_2,T)$ and a measurable control function $v\colon [\wtau_1,\wtau_2]\to (0,1)$, and let $\wb \xi$ be the solution of 
\eqref{eq: contr sys ext},
starting from the point $\wb\xi(0)=(0,\wq_0)$ 
associated with the control 
\begin{equation}
u(t)=
\begin{cases}
1 & t\in[0,\wtau_1),\\
v(t) & t \in (\wtau_1,\wtau_2),\\
0 &  t\in (\wtau_2,\tau_3),\\
-1 & t \in (\tau_3,T].
\end{cases}
\end{equation}
We consider the  piecewise-affine reparametrization of time $\varphi \colon [0,T]\to [0,T]$ defined by 
\begin{equation}
\dot{\varphi}(t)=
\begin{cases}
1 & t \in [0,\wtau_2),\\
\frac{\tau_3-\wtau_2}{\wtau_3-\wtau_2} \quad &  t\in (\wtau_2,\wtau_3),\\
\frac{T-\tau_3}{T-\wtau_3} & t \in (\wtau_3,T],
\end{cases}\qquad \varphi(0) = 0,
\end{equation}
and we set
$\wb{\eta}_t:=\begin{pmatrix}
                                 \eta_t^0\\ \eta_t
                                \end{pmatrix}
=\wb{\wS{t}}^{-1} (\wb \xi(\varphi(t)))$. 
Let  $\gamma_0,\gamma_T\colon M \to \R$ be two smooth functions such that
\begin{equation}
\ud\gamma_0(\wh q_0)=\well_0,
\qquad 
\ud\gamma_T(\wh q_T)= - \well_T,
\end{equation} 
and let $\wh \gamma_0:=\gamma_0\circ \wS{0}$, $\wh \gamma_T:=\gamma_T \circ\wS{T}$.
Then, the cost can be written in terms of the pull-back trajectory $\wb{\eta}_t $ as
\begin{equation} \label{eq: costo J}
 J(u)= \wS{T}^0(\wb \eta_T) - \wS{0}^0(\wb \eta_0) +\wh \gamma_0(\eta_0)+\wh \gamma_T(\eta_T).
\end{equation}
Thanks to PMP, it is possible to show that the first
variation of $J$ evaluated at $\wu$ is null (see Appendix~\ref{app secvar} for more details).

In order to compute the second variation of $J$, we introduce the pullbacks to time $\wtau_1$ of the vector fields governing the dynamics of $\wb{\eta}_t$:
\begin{equation}
g^1_t :=\wS{t*}^{-1}f_1\circ\wS{t}, \quad
k_3 := \wS{\wtau_3*}^{-1}f_0\circ\wS{\wtau_3}, \quad
k_4 := \wS{\wtau_3*}^{-1}(f_0-f_1)\circ\wS{\wtau_3}, \quad
k=k_4-k_3:=-\wS{\wtau_3*}^{-1}f_1\circ\wS{\wtau_3}.
\end{equation}
Setting $\ep := -(\tau_3 - \wtau_3)$, the second variation of the optimal control problem is  given by
\begin{equation}
\begin{split}\label{eq: secvar 1}
J\se[\delta v, \ep]^2 & := 
 \int_{\wtau_1}^{\wtau_2} \delta v(s) 
\lieder{\delta \eta(s)}{\big(\wpsi{s} + L_{g^1_s} \big(\wh\gamma_T + \wS{T}^0 - \wS{s}^0\big)\big)}
\ud s \\
 & - \dfrac{\ep^2}{2}\liederdo{k}{ \big(\wh\gamma_T + \wS{T}^0 - \wS{\wtau_2}^0\big)}  
+ \dfrac{\ep^2}{2}
\lieder{\liebr{k_4}{k_3}}{\big(\wh\gamma_T + \wS{T}^0 - \wS{\wtau_2}^0\big) }
\\
& -\dfrac{\ep^2}{2}\lieder{k_4}{\wpsi{\wtau_3}}, 
\end{split}
\end{equation}
where $\delta v \in L^\infty([\wtau_1, \wtau_2])$ and  $\delta \eta_t$ is the linearisation of $\eta_t$ at $ u = \wh u$, and satisfies the control system
\begin{equation} \label{eq: delta eta}
\dot{\delta \eta}_t=
\begin{cases}
0 & t\in[0,\wtau_1),  \\
\delta v(t) g_t^1 (\wxu) \quad 
& t\in(\wtau_1,\wtau_2),  \\
-\frac{\ep}{\wtau_3 - \wtau_2} k_3(\wxu) & t\in(\wtau_2,\wtau_3),  \\
\frac{\ep}{T - \wtau_3} k_4(\wxu) & t\in(\wtau_3,T],
\end{cases} 
\qquad
\begin{matrix}
\delta\eta_0= 0, \\ 
\delta \eta_T=0. 
\end{matrix}
\end{equation}

The second variation \eqref{eq: secvar 1} is degenerate, as the quadratic term in $\delta v$ (Legendre
term) is missing; to overcome this issue, we perform a Goh transformation, that is, we integrate the control variation and we add an additional variation $\ep_0$,
in the same spirit of \cite{PS11}:
\begin{equation} \label{eq: goh}
w(t)  := \int_t^{\wtau_2} \delta v(s) \ud s, \quad t \in [\wtau_1, \wtau_2], \qquad
\ep_0  := w(\wtau_1).
\end{equation}
We thus obtain the {\em extended admissible variations}  as the triples
$ \de := (\ep_0,\ep,w)\in \R\times \R\times L^2([\wtau_1,\wtau_2])$
such that the system
\begin{equation}\label{eq:zetaridotto}
\begin{cases}
 \dot\zeta(t) = w(t)\dot g^1_t(\wxu), \\ 
 \zeta(\wtau_1) =  \ep_0 f_1(\wxu),  \\  
\zeta(\wtau_2) =  -  \ep k(\wxu)
\end{cases}
\end{equation}
admits a solution, and the extended second variation as the quadratic form
\begin{equation}
\begin{split}\label{eq: secvar ext}
J\se_{e}[\de]^2 &= 
\dfrac{\ep_0^2}{2} 
\lieder{f_1}{\big(\psi  +  L_{f_1} \big( \wh\gamma_T + \wS{T}^0\big)\big)} 
+ \dfrac{1}{2}\int_{\wtau_1}^{\wtau_2} \left( w^2(s) R(s) + 2w(s) L_{\zeta(s)} A(s,\wxu) \right) \ud s
\\ 
& -\dfrac{\ep^2}{2}L_{k_4} \wpsi{\wtau_3} (\wxu)  
- \dfrac{\ep^2}{2}\liederdo{k}{ \big(\wh\gamma_T + \wS{T}^0 - \wS{\wtau_2}^0\big)}  
+ \dfrac{\ep^2}{2}
\lieder{[k_4, k_3]}{\big(\wh\gamma_T + \wS{T}^0 - \wS{\wtau_2}^0\big)},
\end{split}
\end{equation}
where we put
\begin{align}
A(s,q)  := &\AA{s}{q}, \\
R(s)  := &\RR{s}. 
\end{align}
\begin{remark}
We remark that $R(s)=\mathbb{L}(\wla(s))$, which is positive by Assumption~\ref{ass:  sglc}. 	
\end{remark}

We can now state our final assumption.
\begin{ass} \label{ass:  coerc}
The quadratic form $J\se_e$ \eqref{eq: secvar ext} is coercive on the space of admissible variations 
	\begin{equation} 
	\cW  := \big\{
	\delta e = (\ep_0,  \ep, w) \in \R\times \R \times  
	 L^2([\wtau_1, \wtau_2])
	\text{ such that system \eqref{eq:zetaridotto} admits a solution}
	\big\}.
	\end{equation}
\end{ass}

\subsection{Consequences of coerciveness of $J\se_{e}$}


\begin{lemma}\label{le:f1}
Assume that Assumption~\ref{ass:  coerc} holds true. Then $f_1(\wxu)\neq 0$.
\end{lemma}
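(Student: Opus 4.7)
The plan is to argue by contradiction: assume $f_1(\wxu) = 0$ and exhibit a one-parameter family of admissible variations in $\cW$ on which the extended second variation vanishes identically, contradicting coerciveness.

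First I would inspect the defining boundary value problem \eqref{eq:zetaridotto} for $\cW$. The triple $\de = (\ep_0, 0, 0)$ with $\ep = 0$ and $w \equiv 0$ leads to $\dot\zeta \equiv 0$, with boundary conditions $\zeta(\wtau_1) = \ep_0 f_1(\wxu)$ and $\zeta(\wtau_2) = 0$. When $f_1(\wxu) \neq 0$ these force $\ep_0 = 0$ (yielding only the trivial variation), but under the contradictory hypothesis $f_1(\wxu) = 0$ both endpoint conditions are automatically satisfied by $\zeta \equiv 0$ for \emph{every} $\ep_0 \in \R$. Hence $\{(\ep_0, 0, 0) : \ep_0 \in \R\} \subset \cW$ is a full one-dimensional subspace.

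Next I would evaluate $J\se_e$ on this subspace. Setting $\ep = 0$ and $w \equiv 0$ in \eqref{eq: secvar ext} kills the integral and the $\ep^2$ terms, leaving only
\begin{equation}
J\se_e[(\ep_0,0,0)]^2 = \dfrac{\ep_0^2}{2}\, L_{f_1}\bigl(\psi + L_{f_1}(\wh\gamma_T + \wS{T}^0)\bigr)(\wxu).
\end{equation}
The key observation is then that, for any smooth function $\varphi$ on $M$, $L_{f_1}\varphi(\wxu) = \langle d\varphi(\wxu), f_1(\wxu)\rangle = 0$ whenever $f_1(\wxu) = 0$. Applying this with $\varphi = \psi$ and with $\varphi = L_{f_1}(\wh\gamma_T + \wS{T}^0)$ shows that the coefficient above is zero, so $J\se_e[(\ep_0, 0, 0)]^2 = 0$ for every $\ep_0$.

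This contradicts Assumption~\ref{ass:  coerc}: choosing $\ep_0 = 1$ gives an element of $\cW$ of unit norm on which $J\se_e$ vanishes, so no positive coercivity constant can exist. I expect no real obstacle here since both the admissibility check and the vanishing of the Lie derivatives are immediate from $f_1(\wxu) = 0$; the only mild subtlety is making sure the variation $(\ep_0, 0, 0)$ is genuinely nontrivial in the norm topology of $\cW$, which is clear from the natural product norm on $\R \times \R \times L^2([\wtau_1, \wtau_2])$.
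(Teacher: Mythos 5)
Your proof is correct and follows essentially the same route as the paper: assume $f_1(\wxu)=0$, observe that $(\ep_0,0,0)$ is then a nontrivial admissible variation on which $J\se_e$ vanishes because Lie derivatives along $f_1$ at $\wxu$ are zero, contradicting coerciveness. You simply spell out in more detail the admissibility check and the vanishing of the Lie derivative term that the paper states tersely.
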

\begin{proof}
	Assume by contradiction that $f_1(\wxu) = 0$.
	Then $\de  := \begin{pmatrix}
  \ep_0 = 1, \ep = 0, w \equiv 0 
	\end{pmatrix}$
	is a non-trivial admissible variation and
	$J\se_e[\de]^2 =\lieder{f_1}{\big(\psi  +  L_{f_1} \big( \wh\gamma_T + \wS{T}^0\big)\big)} =0$, since $f_1(\wxu) = 0$.  Thus we have a contradiction. 
\end{proof}

\begin{lemma}\label{le:alpha}
	Under Assumption~\ref{ass:  coerc}, there 
exist a neighbourhood $U_{\wxu}$ of $\wxu$ in $M$ and a smooth function $\alpha \colon U_{\wxu}\to \R$ such that
	\begin{equation}  \label{eq: Jse}
	\begin{split}
	J\se_{e}[\de]^2 &= \dfrac{\ep_0^2}{2} 
	\liederdo{f_1}{\big( \alpha+ \wh\gamma_T + \wS{T}^0\big)}
	+ \dfrac{1}{2}\int_{\wtau_1}^{\wtau_2} \left( w^2(s) R(s) + 2w(s) L_{\zeta(s)} A(s, \wxu) \right) \ud s 
	\\
	&- \dfrac{\ep^2}{2}  \lieder{k_4}{\wpsi{\wtau_3}}  - \dfrac{\ep^2}{2}\liederdo{k}{ \big(\wh\gamma_T + \wS{T}^0 - \wS{\wtau_2}^0\big)}  
	+ \dfrac{\ep^2}{2}
	\lieder{\liebr{k_4}{k_3}}{\big(\wh\gamma_T + \wS{T}^0 - \wS{\wtau_2}^0\big)}	.
	\end{split}
	\end{equation}

	In particular, it holds
	\begin{equation}
	\liede{f_1}{\alpha}{q} = \psi(q) \quad \forall q  \in U_{\wxu}, \qquad \ud\alpha(\wxu) = \wh\ell_1.
	\end{equation}
\end{lemma}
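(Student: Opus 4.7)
The goal is to build a smooth function $\alpha$ in a neighborhood of $\wxu$ satisfying the two pointwise conditions $L_{f_1}\alpha \equiv \psi$ and $d\alpha(\wxu)=\wh\ell_1$, and then to verify that replacing $\psi$ by $L_{f_1}\alpha$ in the $\ep_0^2$ term of $J\se_e$ produces exactly the stated expression. My plan is to treat these as two independent steps.

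For the construction of $\alpha$, the natural tool is the flow-box theorem: by Lemma~\ref{le:f1}, $f_1(\wxu)\neq 0$, so there exist local coordinates $(x^1,\dots,x^n)$ in a neighbourhood $U_{\wxu}$ of $\wxu$ in which $f_1=\partial/\partial x^1$ and $\wxu$ corresponds to the origin. In these coordinates the PDE $L_{f_1}\alpha=\psi$ reads $\partial_{x^1}\alpha=\psi$, whose general smooth solution is
\begin{equation}
\alpha(x^1,\dots,x^n)=\alpha_0(x^2,\dots,x^n)+\int_0^{x^1}\psi(s,x^2,\dots,x^n)\,ds,
\end{equation}
where $\alpha_0$ is a smooth function on the transversal slice $N=\{x^1=0\}$ that is at our disposal. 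This gives complete freedom to prescribe the restriction of $d\alpha(\wxu)$ to $T_{\wxu}N$, while the component along $f_1(\wxu)$ is forced by the equation to equal $\psi(\wxu)$. I will therefore pick $\alpha_0$ (for instance, linear in the chosen coordinates) so that its differential at the origin coincides with the restriction of $\wh\ell_1$ to $T_{\wxu}N$; then the two differentials $d\alpha(\wxu)$ and $\wh\ell_1$ agree on $T_{\wxu}N$ by construction, and on $f_1(\wxu)$ because
\begin{equation}
\langle d\alpha(\wxu),f_1(\wxu)\rangle = \psi(\wxu)= F_1(\wh\ell_1)=\langle \wh\ell_1,f_1(\wxu)\rangle,
\end{equation}
the middle equality being precisely the consequence of $\Phi^-(\wh\ell_1)=0$ together with $\psi(\wxu)>0$ (Assumptions~\ref{ass:  nonzero at switch}--\ref{ass:  struttura arco}). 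Hence $d\alpha(\wxu)=\wh\ell_1$.

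Once $\alpha$ is in hand, the rewriting of $J\se_e$ is an algebraic check. The only term of \eqref{eq: secvar ext} that differs from the claimed form~\eqref{eq: Jse} is the $\ep_0^2$ one; all other terms coincide. For the $\ep_0^2$ term I compute, at $\wxu$,
\begin{equation}
L_{f_1}\bigl(\psi+L_{f_1}(\wh\gamma_T+\wS{T}^0)\bigr)(\wxu) = L_{f_1}\psi(\wxu)+L^2_{f_1}(\wh\gamma_T+\wS{T}^0)(\wxu),
\end{equation}
and since $L_{f_1}\alpha=\psi$ identically on $U_{\wxu}$, I may apply $L_{f_1}$ once more to obtain $L^2_{f_1}\alpha=L_{f_1}\psi$, so that the right-hand side equals $L^2_{f_1}(\alpha+\wh\gamma_T+\wS{T}^0)(\wxu)$, which is precisely the first term of \eqref{eq: Jse}.

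The only subtle point, and the one on which I will take care, is the compatibility between the PDE-forced value of $\langle d\alpha(\wxu),f_1(\wxu)\rangle$ and the prescribed value $\langle \wh\ell_1,f_1(\wxu)\rangle$; everything else (existence of $\alpha$, smoothness, and the $L^2$ rewriting) is routine. Fortunately this compatibility holds automatically because at $t=\wtau_1$ we are on the singular arc with $\Phi^-(\wh\ell_1)=0$ and $\psi(\wxu)>0$, forcing $F_1(\wh\ell_1)=\psi(\wxu)$, so the construction goes through.
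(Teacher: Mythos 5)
Your construction is precisely the paper's: pick flow-box coordinates where $f_1=\partial/\partial y_1$ (possible by Lemma~\ref{le:f1}), solve $\partial_{y_1}\alpha=\psi$ with the transversal Cauchy data chosen to reproduce the components of $\wh\ell_1$ orthogonal to $f_1$, and observe that the remaining component matches because $\Phi^-(\wh\ell_1)=0$ with $\psi(\wxu)>0$ forces $F_1(\wh\ell_1)=\psi(\wxu)$. You merely spell out the compatibility check and the identity $L^2_{f_1}\alpha=L_{f_1}\psi$ a bit more explicitly than the paper does.
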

\begin{proof}
Thanks to Lemma \ref{le:f1}, we can choose local coordinates $(y_1, y_2, \ldots, y_n)$ around $\wxu$ such that $f_1 \equiv \dfrac{\partial}{\partial y_1}$ in a neighbourhood $U_{\wxu}$ of $\wxu$.

Let $\wh\ell_1 = \begin{pmatrix}
\wh p_1, \wh p_2, \ldots, \wh p_n
\end{pmatrix}$. Thus $\wh p_1 = F_1(\wh\ell_1) = \psi(\wxu)$.
Possibly shrinking $U_{\wxu}$, let $\alpha \colon U_{\wxu}  \to \R$ be the solution of the Cauchy problem
\begin{equation}
\begin{cases}
\dfrac{\partial \alpha}{\partial y_1}(y_1, y_2, \ldots, y_n) = \psi(y_1, y_2, \ldots, y_n), \\
\alpha (0, y_2, \ldots, y_n) = \sum_{j=2}^n \wh p_j y_j.
\end{cases}
\end{equation}
Then, by construction, $\ud\alpha(\wxu) = \begin{pmatrix}
\psi(\wxu), \wh p_2, \ldots, \wh p_n
\end{pmatrix} = \wh\ell_1$. 
The term  $L_{f_1}\psi(\wxu)$ in $J\se_e$  can thus be replaced by 
$\liederdo{f_1}{\alpha}$, and this proves  the claim.
\end{proof}

We claim that 
the term $\liederdo{f_1}{\big( \alpha+ \wh\gamma_T + \wS{T}^0\big)}$ in the expression for $J\se_e$ can be replaced by $\uD^2\big(\alpha+\wh\gamma_T+ \wS{T}^0\big)(\wxu)[f_1(\wxu)]^2$.
Indeed, by definition of $\gamma_T$, it holds $\ud\gamma_T(\wx_T) = -\well_T$; since, by \eqref{eq: solution lambda},
we have that $\well_T= \big(\wh\ell_1 + \ud \wS{T}^0\big)\wSinv{T*}$, we obtain that
$\ud\alpha(\wxu)=\well_1=- \big(\ud\wh\gamma_T+  \ud \wS{T}^0\big)(\wxu)$, and we are done. We can thus write 
$J\se_e$ on $\cW$ as 
\begin{equation}
\begin{split} \label{eq: Jse II}
J_e\se[\de]^2 & =  \dfrac{1}{2}\uD^2\big(\alpha + \wh\gamma_T + \wS{T}^0 \big) (\wxu)[\ep_0 f_1(\wxu)]^2 
+ \dfrac{1}{2}\int_{\wtau_1}^{\wtau_2} \left( w^2(s) R(s) + 2w(s) L_{\zeta(s)} A(s, \wxu) \right) \ud s \\
& -\dfrac{\ep^2}{2}L_{k_4} \wpsi{\wtau_3} (\wxu)  
 - \dfrac{\ep^2}{2}\liederdo{k}{ \left(\wh\gamma_T + \wS{T}^0 - \wS{\wtau_2}^0\right)}  
+ \dfrac{\ep^2}{2}
\lieder{\liebr{k_4}{k_3}}{\big(\wh\gamma_T + \wS{T}^0 - \wS{\wtau_2}^0\big)}
.
\end{split}
\end{equation}

We now extend the space of admissible variations, in the following way: we remove the constraint on $\zeta(\wtau_1)$ and we consider 
the control system 
\begin{equation}\label{eq: variazioni ultime}
\begin{cases}
\dot\zeta(s) = w(s) \dot g^1_s(\wxu), \\ \zeta(\wtau_1) = \dz \in T_{\wxu} M, \\ \zeta(\wtau_2) = - \ep k(\wxu).	
\end{cases}
\end{equation}
The associated space of variations is thus defined as
	\begin{equation} 
\bcW  := \big\{
\delta e = (\delta z,  \ep, w) \in T_{\wxu} M \times \R \times  
L^2([\wtau_1, \wtau_2])
\text{ such that system \eqref{eq: variazioni ultime} admits a solution}
\big\}.
\end{equation}
Applying \cite[Theorem~11.6]{Hes66}, we can easily verify that \eqref{eq: Jse II} defines a Legendre
form on $\bcW$. 
For $s\in \R^+$, we define $\theta(y_1, y_2, \ldots, y_n)  := \dfrac{s}{2}\sum_{j=2}^n y_j^2$ where $(y_1, y_2, \ldots y_n)$ are the coordinates defined in the proof of Lemma \ref{le:alpha}. 
Applying \cite[Theorem 13.2]{Hes66},  we obtain that, under Assumption \ref{ass:  coerc}  and if $s$ is large enough, then the quadratic form
\begin{equation}
\bJ [\de]^2 := J\se_e[\de]^2 + \dfrac{1}{2}\uD^2\theta[\dz]^2
\end{equation} 
is coercive on   $\bcW$.

We can write $\bJ$ explicitly as
\begin{equation} \label{eq: Jssecondo}
\begin{split}
\bJ[\de]^2 & =  \dfrac{1}{2}\uD^2\big(\alpha +\theta+ \wh\gamma_T + \wS{T}^0 \big) (\wxu)[\dz]^2 
+ \dfrac{1}{2}\int_{\wtau_1}^{\wtau_2} \left( w^2(s) R(s) + 2w(s) L_{\zeta(s)} A(s,\wxu) \right) \ud s \\
& -\dfrac{\ep^2}{2}L_{k_4} \wpsi{\wtau_3} (\wxu)   - \dfrac{\ep^2}{2}\liederdo{k}{ \big(\wh\gamma_T + \wS{T}^0 - \wS{\wtau_2}^0\big)}  
+ \dfrac{\ep^2}{2}L_{[k_4,k_3]} \big(\wh\gamma_T + \wS{T}^0 - \wS{\wtau_2}^0\big)(\wxu) .
\end{split}
\end{equation}

\section{Construction of the over-maximised flow}\label{sec:overmaxi}
As briefly mentioned in the Introduction, Hamiltonian methods to prove the optimality of an extremal need two ingredients:
a Lagrangian submanifold $\Lambda_1$ of the cotangent bundle, containing a point of the reference extremal, and a over-maximised Hamiltonian flow (that is, the flow associated with a 
Hamiltonian function which is greater than or equal to $H_{\max}$, and  which  coincides with it along the reference extremal, 
at least up to the first order); the optimality of the reference extremal is proved if, at each time $t\in[0,T]$, the image of $\Lambda_1$ 
under the flow of the over-maximised Hamiltonian projects 
 one-to-one onto a neighbourhood of $\wxi(t)$ in the base manifold (see Section~\ref{sec:  main results}).
 
This section is devoted to the construction of the over-maximised Hamiltonian and of its associated flow, that we refer to
as \emph{over-maximised flow}. It is defined patching together some piecewise smooth Hamiltonian flows, each one defined on a suitable neighbourhood
of each arc.  We stress that, to achieve this task, to coerciveness of $J_e\se$ is not needed; though, we extensively use
Assumptions~\ref{ass:  nonzero at switch}--\ref{ass:  segni}, which are assumed to hold true throughout the
whole section.

\subsection{The over-maximised Hamiltonian near the singular arc}\label{sec:  over max Ham}

By definition, the singular arc evolves on the hypersurface $\Sigma^-=\{\Phi^-=0\}$ and $\Phi^+(\wla(t)) > 0$. Thus, in a sufficiently small neighbourhood of 
$\wla([\wtau_1,\wtau_2])$, the maximised Hamiltonian is given by
\begin{equation}
H_{\max}=\begin{cases}
F_0+\Phi^- \quad& \mbox{if } \Phi^-\geq 0,\\
 F_0   
\quad& \mbox{if } \Phi^-\leq 0.
\end{cases}
\end{equation}
Therefore, $H_{\max}$ is continuous, but its associated Hamiltonian vector field is not well defined on $\Sigma^-$: for every smooth Hamiltonian $v(t, \ell)$,  every 
Hamiltonian of the form $F_0+ v \Phi^-$  coincides with $H_{\max}$ on $\Sigma^-$.  

On the other hand, no Hamiltonian of the form $F_0+ v \Phi^-$ can be used in place of the maximised Hamiltonian: 
indeed, by Assumption~\ref{ass:  sglc}, for every $t \in(\wtau_1, \wtau_2)$ and every  $\cU$ neighbourhood of $\wla(t)$ in $\Sigma^-$, there exists some $\ell \in \cU$ such that $F_{01}(\ell) - L_{f_0}\psi(\pi\ell)< 0 $. For any choice of $v > 0$, the flow of $\vF{0} + v \vPhi{-}$ sends $\ell$ in the region $\{\Phi^- < 0\}$, where 
$F_0 + v \Phi^-$ is no more the maximised Hamiltonian.  
 Indeed, whatever the choice of $v$, the Hamiltonian vector field $\vF{0} + v \vPhi{-}$ is tangent to $\Sigma$ only on $ S^- =\{\ell\in\Sigma^-  \colon  F_{01}(\ell)-L_{f_0}\psi(\pi\ell)=0 \}$. 
 We refer to \cite{PS11} for a detailed description of the phenomenon.  

However, as proposed in \cite{Ste07}, the flow associated with $H_{\max}$ may be replaced 
by the flow of a suitable over-maximised Hamiltonian tangent to $\Sigma^-$, at least for $t\in[\wtau_1,\wtau_2]$.
The construction of such a flow in a neighbourhood of $\wla([\wtau_1,\wtau_2])$ in $\Sigma^-$ relies
on the definition of a $C^1$ over-maximised Hamiltonian  that agrees with the maximised one 
 at least up to the first order along the reference extremal, and whose Hamiltonian vector field is tangent to $\Sigma^-$ (see \cite{Ste07,PS11,SZ16,CS16} for  similar constructions).

In order to do so, following the steps of \cite{CS10,CS16}, we substitute $F_0$ with a suitable Hamiltonian $H_0$ which is constant along the integral lines of $\Phi^-$.  

\begin{lemma}\label{le:theta}
 There exist a neighbourhood $\cU$ of $\wla ([\wtau_1,\wtau_2])$ in $T^*M$ and a smooth function 
 $\vartheta \colon \cU \to \R$ such that
 \begin{equation}
\big(F_{01}-L_{f_0}\psi\circ\pi\big)\circ \exp\big(\vartheta(\ell)\vPhi{-}\big)(\ell)=0 \qquad \forall \ell\in \cU,
 \end{equation}
and, for any $\ell \in \cU \cap S^-$, 
\begin{equation}\label{dtheta}
\scal{\ud\vartheta(\ell)}{(\cdot)} =  - \frac{1}{\mathbb{L}(\ell)}\left(
\scal{\ud F_{01}(\ell)}{(\cdot)} - 
\liedede{\pi_*(\cdot)}{f_0}{\psi}{\pi\ell}
\right).
\end{equation}
\end{lemma}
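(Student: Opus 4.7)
The plan is to apply the implicit function theorem to the scalar map
\[
G(\ell, t) := \bigl(F_{01} - L_{f_0}\psi\circ\pi\bigr)\bigl(\exp(t\vPhi{-})(\ell)\bigr),
\]
defined on a neighbourhood of $\wla([\wtau_1, \wtau_2]) \times \{0\}$ in $T^*M \times \R$, and then to solve $G(\ell, t) = 0$ for $t = \vartheta(\ell)$. By Assumption~\ref{ass:  struttura arco} the reference extremal lies in $S^-$ throughout the singular arc, so $G(\wla(s), 0) = 0$ for every $s \in [\wtau_1, \wtau_2]$, supplying the seed at which the equation is already solved with $t = 0$.

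The key technical step is verifying the non-degeneracy of $\partial_t G$ along the reference arc. Using the paper's convention $\{F, G\} = L_{\vec F}G$, one has
\[
\partial_t G(\ell, 0) = L_{\vPhi{-}}\bigl(F_{01} - L_{f_0}\psi\circ\pi\bigr)(\ell) = -\bigl\{F_{01} - L_{f_0}\psi\circ\pi,\, \Phi^-\bigr\}(\ell).
\]
By Assumption~\ref{ass:  segni}, $\psi > 0$ in a neighbourhood of the singular arc, so that $\Phi^- = F_1 - \psi\circ\pi$ there. Using the bracket rules $\{F_f, F_g\} = F_{[f,g]}$ and $\{F_f, \varphi\circ\pi\} = L_f\varphi\circ\pi$, the vanishing of Poisson brackets between fibre-constant functions, and the sign identity $[f_{01}, f_1] = -f_{101}$, I would compute
\[
\bigl\{F_{01} - L_{f_0}\psi\circ\pi,\, F_1 - \psi\circ\pi\bigr\} = -F_{101} - L_{f_{01}}\psi + L_{f_1}L_{f_0}\psi = -\mathbb{L},
\]
so that $\partial_t G(\ell, 0) = \mathbb{L}(\ell)$, which is strictly positive along $\wla([\wtau_1, \wtau_2])$ by Assumption~\ref{ass:  sglc}. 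Shrinking if needed, $\partial_t G > 0$ on a whole neighbourhood of the compact arc, and a covering/patching argument with the implicit function theorem supplies the neighbourhood $\cU$ and a unique smooth $\vartheta \colon \cU \to \R$ satisfying $G(\ell, \vartheta(\ell)) = 0$; uniqueness also forces $\vartheta \equiv 0$ on $\cU \cap S^-$, since $G(\ell, 0) = 0$ there.

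The formula~\eqref{dtheta} then follows by implicit differentiation: at $\ell \in \cU \cap S^-$ one has $\vartheta(\ell) = 0$, so differentiating $G(\ell, \vartheta(\ell)) = 0$ with respect to $\ell$ yields $\partial_\ell G(\ell, 0) + \partial_t G(\ell, 0)\, \ud\vartheta(\ell) = 0$; substituting $\partial_\ell G(\ell, 0) = \ud(F_{01} - L_{f_0}\psi\circ\pi)(\ell)$ and $\partial_t G(\ell, 0) = \mathbb{L}(\ell)$, rearranging, and evaluating on a tangent vector gives the stated identity.

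The main subtlety I anticipate is sign bookkeeping in the Poisson-bracket computation: with the paper's convention $\scal{\ud F(\ell)}{\cdot} = \boldsymbol{\sigma}_\ell(\cdot, \vec F(\ell))$, the identification $L_{\vec F}G = \{F, G\}$ forces $\partial_t G(\ell, 0) = -\{F_{01} - L_{f_0}\psi\circ\pi, \Phi^-\}$ rather than the reverse bracket, and this single sign is precisely what produces the minus sign in~\eqref{dtheta}. Once this is handled, the argument reduces to a textbook application of the implicit function theorem, requiring none of the coerciveness hypotheses and no higher-order information beyond the strong generalised Legendre condition.
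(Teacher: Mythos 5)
Your proof is correct and follows exactly the approach the paper takes: the paper's own proof is a one-line invocation of the implicit function theorem applied to $(s,\ell)\mapsto\big(F_{01}-L_{f_0}\psi\circ\pi\big)\circ\exp(s\vPhi{-})(\ell)$, citing Assumption~\ref{ass:  sglc} for non-degeneracy. You simply spell out the Poisson-bracket computation showing $\partial_t G(\ell,0)=\mathbb{L}(\ell)$, the patching over the compact arc, the observation $\vartheta\equiv 0$ on $S^-$, and the implicit differentiation giving~\eqref{dtheta}, all of which the paper leaves implicit.
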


\begin{proof}
By Assumption~\ref{ass:  sglc}, we can apply the implicit function theorem to the function 
$(s,\ell)\mapsto\big(F_{01} - L_{f_0}\psi\circ\pi\big)\circ \exp\big(s\vPhi{-}\big)(\ell)$
at the point $(0,\wla(t))$, and obtain the result.
\end{proof}

Thanks to Lemma~\ref{le:theta}, we can define the following Hamiltonian $\wF_0 \colon \cU\to \R$
\begin{equation}
\wF_0(\ell)=F_0\circ\exp(\vartheta(\ell)\vPhi{-})(\ell). 
\end{equation}

\begin{proposition} \label{pro: H0}
The Hamiltonian $H_0$ satisfies the following properties.
\begin{enumerate}
\item For every $\ell \in \Sigma^- \cap \cU$, $\vec{H}_0(\ell)$ is tangent to $\Sigma^-$ and is given by 
\begin{equation} \label{eq: wF}
\vec{\wF}_0(\ell)=\exp(- s\vPhi{-})_*\vec{F}_0 \circ \exp(s\vPhi{-})(\ell)\vert_{s =\vartheta(\ell)}.
\end{equation}
In particular,  it coincides with $\vec{F}_0(\ell)$ if $ \ell \in S^-\cap \cU$.
\item Possibly shrinking $\cU$, $H_0 (\ell) \geq F_0(\ell)$ for any $\ell \in \Sigma^- \cap \cU$. Equality holds if and only if $\ell \in S^-$.
\item For every smooth function $\upsilon (t,\ell) \colon  \R\times T^*M \to \R$, 
 the Hamiltonian vector field associated with $H_0(\ell)+\upsilon(t,\ell) \Phi^-(\ell)$ is tangent to $\Sigma^-$.  
\end{enumerate}
\end{proposition}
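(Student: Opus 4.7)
The plan is to verify the three properties of $H_0$ in order; the main work is the chain rule for $\ud H_0$, which must accommodate the parameter-dependent exponential $\exp(\vartheta(\ell)\vPhi{-})$.

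For item (1), set $\Psi(\ell) := \exp(\vartheta(\ell)\vPhi{-})(\ell)$, so that $H_0 = F_0\circ\Psi$. Since $\vPhi{-}$ is tangent to the level sets of $\Phi^-$, the flow of $\vPhi{-}$ preserves $\Sigma^-$; hence for any $\ell\in\Sigma^-\cap\cU$ the point $\Psi(\ell)$ lies in $\Sigma^-$, and by the defining identity of $\vartheta$ in Lemma~\ref{le:theta} it also satisfies $F_{01}(\Psi(\ell)) - L_{f_0}\psi(\pi\Psi(\ell)) = 0$, so $\Psi(\ell)\in S^-$. Differentiating $H_0 = F_0\circ\Psi$ yields, for every $v\in T_\ell(T^*M)$,
\begin{equation}
\ud H_0(\ell)(v) = \ud\vartheta(\ell)(v)\,\ud F_0(\Psi(\ell))\bigl(\vPhi{-}(\Psi(\ell))\bigr) + \ud F_0(\Psi(\ell))\bigl(\exp(\vartheta(\ell)\vPhi{-})_*\,v\bigr).
\end{equation}
A direct Hamiltonian-lift calculation gives $\ud F_0(\vPhi{-}) = L_{f_0}\psi\circ\pi - F_{01}$, which vanishes at $\Psi(\ell)\in S^-$; hence the first summand is zero. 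The remaining term, combined with the symplectic nature of the flow of $\vPhi{-}$, equals $\boldsymbol{\sigma}_\ell\bigl(v,\,\exp(-\vartheta(\ell)\vPhi{-})_*\vF{0}(\Psi(\ell))\bigr)$, which proves formula~\eqref{eq: wF}. Tangency of $\vec{H}_0(\ell)$ to $\Sigma^-$ then follows because $\vF{0}(\Psi(\ell))$ is tangent to $\Sigma^-$ at $\Psi(\ell)\in S^-$ (the identity $\ud\Phi^-(\vF{0}) = F_{01} - L_{f_0}\psi\circ\pi$ vanishes on $S^-$), and the pushforward by the flow of $\vPhi{-}$ preserves $T\Sigma^-$ between corresponding points. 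When $\ell\in S^-$, the uniqueness part of the implicit function theorem used in Lemma~\ref{le:theta} gives $\vartheta(\ell)=0$, so $\Psi(\ell)=\ell$ and $\vec{H}_0(\ell)=\vF{0}(\ell)$.

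For item (2), fix $\ell\in\Sigma^-\cap\cU$ and consider the scalar function $\phi_\ell(s):=F_0(\exp(s\vPhi{-})(\ell))$ for $s$ near $0$. Thanks to Assumption~\ref{ass:  nonzero at switch} and continuity of $\psi$, on a possibly shrunk $\cU$ we have $\Phi^- = F_1 - \psi\circ\pi$, and direct computation gives
\begin{equation}
\phi_\ell'(s) = \bigl(L_{f_0}\psi\circ\pi - F_{01}\bigr)\bigl(\exp(s\vPhi{-})(\ell)\bigr), \qquad \phi_\ell''(s) = -\mathbb{L}\bigl(\exp(s\vPhi{-})(\ell)\bigr).
\end{equation}
The first identity shows $\phi_\ell'(\vartheta(\ell)) = 0$ by the definition of $\vartheta$. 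By Assumption~\ref{ass:  sglc}, $\mathbb{L}(\wla(t)) > 0$ on $[\wtau_1,\wtau_2]$, so further shrinking $\cU$ we may assume $\mathbb{L} > 0$ on $\cU$; hence $\phi_\ell$ is strictly concave and $\vartheta(\ell)$ is its unique strict maximum. It follows that $H_0(\ell) = \phi_\ell(\vartheta(\ell)) \geq \phi_\ell(0) = F_0(\ell)$, with equality if and only if $\vartheta(\ell) = 0$, i.e.\ if and only if $\ell\in S^-$.

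For item (3), by the Leibniz rule the Hamiltonian vector field associated with $\upsilon(t,\ell)\Phi^-(\ell)$ equals $\upsilon\vPhi{-} + \Phi^-\,\vec\upsilon$, where $\vec\upsilon$ denotes the Hamiltonian vector field of $\ell\mapsto\upsilon(t,\ell)$ at fixed $t$. On $\Sigma^-$ the second term vanishes identically, leaving $\upsilon\vPhi{-}$, which is tangent to $\Sigma^-$ since $\vPhi{-}$ is. Combined with the tangency of $\vec{H}_0$ from item (1), this shows that the Hamiltonian vector field of $H_0 + \upsilon\Phi^-$ is tangent to $\Sigma^-$.

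The main obstacle is the chain-rule step in (1): the $\ell$-dependence of $\vartheta$ produces a potentially nontrivial extra term in $\ud H_0$ that would otherwise spoil both formula~\eqref{eq: wF} and the tangency property. The decisive observation that rescues the argument is that $\Psi$ maps $\Sigma^-$ precisely into $S^-$, exactly the locus on which that extra term vanishes.
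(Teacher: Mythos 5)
Your proof is correct. Items (1) and (3) follow the paper's logic, with item (1) spelling out the ``simple computations'' the paper leaves implicit (the chain rule giving the extra $\ud\vartheta$ term, and the observation that $\Psi$ maps $\Sigma^-$ into $S^-$ so this term dies; the paper instead derives tangency by noting $H_0$ is constant along integral lines of $\vPhi{-}$, hence $\scal{\ud H_0}{\vPhi{-}}=0$, and then invoking the symplectic pairing). The genuine departure is in item (2). The paper observes that $\ud(H_0-F_0)$ vanishes on $S^-$, computes the Hessian $\uD^2(H_0-F_0)(\ell)[\dl]^2 = -L^2_{\vPhi{-}}F_0(\ell)\,\scal{\ud\vartheta(\ell)}{\dl}^2 = \mathbb{L}(\ell)\scal{\ud\vartheta(\ell)}{\dl}^2$ at points $\ell\in S^-$, and concludes via a Morse--Bott-type second-order argument whose kernel is identified through the splitting \eqref{eq: tangent split}. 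You instead fix a single flow line $s\mapsto\exp(s\vPhi{-})(\ell)$ and show that $\phi_\ell(s)=F_0(\exp(s\vPhi{-})\ell)$ is strictly concave (since $\phi_\ell''=-\mathbb{L}<0$) with critical point at $s=\vartheta(\ell)$, so $H_0(\ell)=\phi_\ell(\vartheta(\ell))\geq\phi_\ell(0)=F_0(\ell)$ with equality iff $\vartheta(\ell)=0$. This one-dimensional concavity argument is tighter than the paper's: it gives the inequality directly along each fibre rather than inferring it from a second-order expansion, and it makes the role of Assumption~\ref{ass:  sglc} (ensuring $\mathbb{L}>0$ on $\cU$) completely transparent.
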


\begin{proof}
Claim (1)~easily follows from the fact that $\scal{\ud F_0}{\vPhi-} = 0$ on $S^-$. Indeed, by construction, $\wF_0$ is constant along the integral lines of $\vPhi{-}$, so that $\scal{ \ud\Phi^-}{\vec\wF_0} = \scal{\ud H_0}{\vPhi{-}}=0$, for any $\ell\in \cU$,
i.e.~$\vwF_0$ is tangent to $\Sigma^-$. Equation~\eqref{eq: wF} can be verified with simple computations.

 Since  $\vartheta=0$ on $S^-$, by definition and equation~\eqref{eq: wF} we have that $\wF_0=F_0$ and $\vec\wF_0=\vec F_0$ on $S^-$.
Thus the differential of $\wF_0-F_0$ is identically zero on $S^-$, and, for any $\ell \in S^- \cap \cU$,  we can compute its second derivative:
\begin{align*}
\uD^2(\wF_0-F_0)(\ell)[\dl]^2&=
2 \scal{\ud\,(L_{\vPhi{-}} F_0)(\ell)}{\dl} 
\scal{\ud\vartheta(\ell)}{ \dl} + L^2_{\vPhi{-}} F_0(\ell)
\scal{\ud\vartheta(\ell)}{ \dl}^2\\
&= -L^2_{\vPhi{-}} F_0(\ell) \scal{\ud\vartheta(\ell)}{ \dl}^2.
\end{align*}
Noticing that $L^2_{\vPhi{-}} F_0(\ell) = - 
\mathbb{L}(\ell)$ and thanks to Assumption~\ref{ass:  sglc}, we see that the expression here above is non negative and it  vanishes only 
if $\scal{\ud\vartheta(\ell)}{ \dl}=0$;
equation~\eqref{eq: tangent split} and the fact that $\scal{ \ud\vartheta(\ell)}{\vPhi{-}(\ell)} =-1 $ for $\ell\in  \Sigma^-\cap \mathcal{U}$ prove Claim (2).

Since $\vec{H}_0$ is tangent to $\Sigma^-$, then $\vec{H}_0+u\vPhi-$ is tangent to $\Sigma^-$
too, for every $u\in \R$. Let $\upsilon$ be a smooth function on $\R\times T^*M$. Then the Hamiltonian 
field associated with $H_0+\upsilon(t,\ell)\Phi^-$ is given by 
\begin{equation}
\vec{H}_0(\ell)+\upsilon(t,\ell)\vPhi-+\Phi^-(\ell) \vec{\upsilon}(t,\ell),
\end{equation}
and, by definition of $\Sigma^-$, this completes the proof. 
\end{proof}

\subsection{The over-maximised flow}

Thanks to the regularity of the bang  and zero arcs, for any $t\in [0,\wtau_1) \cup (\wtau_2,\wtau_3)\cup(\wtau_3,T]$, it is possible to find a neighbourhood of $\wla(t)$ where $H_{\max}$  and its associated vector field are unambiguously defined.
As observed in the previous section, this is no longer true for $t\in [\wtau_1,\wtau_2]$. 
On the other hand, thanks to Proposition~\ref{pro: H0}, we know that any over-maximised Hamiltonian of the form $H_0+v\Phi^-$ may replace the maximised Hamiltonian in a neighbourhood of the singular arc in $\Sigma^-$.

Here below we show how to concatenate the flows of $H_{\max}$ 
and of the over-maximised Hamiltonian, in order to obtain a flow defined for all $t\in[0,T]$.

\smallskip
\paragraph{The first bang arc.}

We first construct the over-maximised flow for $t\in[0,\wtau_1]$. 

Proposition~\ref{pro:  H0} guarantees that $H_0\geq F_0$ in a neighbourhood of $\wla([\wtau_1,\wtau_2])$ contained in $\Sigma^-$ only. In other words,
if we want to use $H_0$ to construct the over-maximised Hamiltonian, we have to be sure that, for $t\in[\wtau_1,\wtau_2]$, the image of the sub-manifold
$\Lambda_1$ under the over-maximised flow is in $\Sigma^-$. 
For this reason, it is convenient to start from $\wtau_1$ and construct the flow integrating backward in time.

Fix some $\epsilon>0$ and consider a sufficiently small tubular neighbourhood $\eulU$ of $\wla((\wtau_1-\epsilon,\wtau_1+\epsilon))$ in $T^*M$.
The manifold $\Sigma^-$ separates $\eulU$ in two regions, one in which $\Phi^->0$ (and $H_{\max}=F_0+\Phi^-$), the other one in which $\Phi^-<0<\Phi^+$
(and $H_{\max}=F_0$), since $\psi(q)\neq 0$ in a neighbourhood of $\wxi(\wtau_1)$; in particular, the first bang arc is 
contained in the first region. On the other hand, by Assumption~\ref{ass:  sglc}, the manifold $S^-$ separates $\eulU\cap\Sigma^-$ 
into two regions, in which $F_{01}-L_{f_0} \psi\circ\pi$
has different sign. 
Consider now a small neighbourhood of $\well_1$ in $\Sigma^-$. The trajectories obtained by integrating backward in time the flow generated 
by $\vec{F}_0+\vPhi-$, starting at $t=\wtau_1$ from a point  $\ell$ satisfying $F_{01}(\ell)-L_{f_0} \psi(\pi\ell) < 0$,  immediately leave $\Sigma^-$ and enter
in the region $\{\Phi^->0\}$; in particular, they evolve with the maximised flow and stay close to the reference 
extremal (if $\ell$ is sufficiently close to $\well_1$). 
The same happens for trajectories starting from a point $\ell$ such that $F_{01}(\ell)-L_{f_0} \psi(\pi\ell) = 0$, thanks to Assumption~\ref{ass:  reg switch}.

On the contrary, the integral curves of 
 $\vec{F}_0+\vPhi-$ with an initial condition $\ell$ satisfying $F_{01}(\ell)-L_{f_0} \psi(\pi\ell) > 0$  immediately enter into
 the region $\{\Phi^-<0\}$, so that they are not integral curves of $\vec{H}_{\max}$ and may soon leave $\eulU$.
To fix this issue, for initial conditions belonging to the region where $F_{01}(\ell)-L_{f_0} \psi(\pi\ell)  > 0$, we substitute the flow of $\vec{F}_0+\vPhi-$ with the one of $\vec{H}_0+\vPhi-$, until the trajectories  reach $S^-$.
This construction is explained in Proposition~\ref{pro:  primo arco} here below, whose proof relies on the following lemma.

\begin{lemma} \label{lemma:  t1}
There exist a neighbourhood $\cO_1$ of $\well_1$ in $\Sigma^-$ and a smooth function $t_1 \colon \cO_1\to \R$ satisfying $t_1(\well_1)=\wtau_1$ such that
\begin{equation}
(F_{01} -(L_{f_0} \psi)\circ\pi)\circ \exp\big( (t_1(\ell)-\wtau_1)(\vec{H}_0+\vPhi-)\big)(\ell)=0 \quad \forall \ell\in \cO_1.
\end{equation}
Moreover, $t_1(\ell)\gtreqless \wtau_1$ if and only if $F_{01}(\ell)-L_{f_0}\psi(\pi \ell)\lesseqgtr 0$.
\end{lemma}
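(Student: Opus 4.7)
The natural strategy is an implicit function theorem argument applied to the map
\begin{equation}
\Psi(t,\ell) := \bigl(F_{01} - (L_{f_0}\psi)\circ\pi\bigr)\circ \exp\bigl((t-\wtau_1)(\vec{H}_0 + \vPhi{-})\bigr)(\ell),
\end{equation}
defined on a neighbourhood of $(\wtau_1,\well_1)$ in $\R\times \Sigma^-$. By Proposition~\ref{pro: H0}(3), the flow of $\vec{H}_0+\vPhi-$ preserves $\Sigma^-$, so $\Psi$ is well defined on a neighbourhood of $(\wtau_1,\well_1)$ in $\R\times\Sigma^-$ for $t$ close to $\wtau_1$. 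Since the singular arc lies in $S^-$, we have $\well_1\in S^-$, and thus $\Psi(\wtau_1,\well_1)=0$.

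The central step is to verify that $\partial_t\Psi(\wtau_1,\well_1)>0$, so that the implicit function theorem applies and yields a smooth $t_1$ defined on a neighbourhood $\cO_1$ of $\well_1$ in $\Sigma^-$. By Proposition~\ref{pro: H0}(1), $\vec{H}_0(\well_1)=\vec{F}_0(\well_1)$ since $\well_1\in S^-$, hence
\begin{equation}
\partial_t \Psi(\wtau_1,\well_1) = \bigl\{F_0+\Phi^-,\; F_{01}-(L_{f_0}\psi)\circ\pi\bigr\}(\well_1).
\end{equation}
Recalling that $\Phi^-= F_1-\psi\circ\pi$ near the singular arc (by Assumption~\ref{ass:  nonzero at switch} and $a_1=1$), and using that $\{\psi\circ\pi, F_{01}\}= - L_{f_{01}}\psi$, $\{F_i,(L_{f_j}\psi)\circ\pi\} = L_{f_i}L_{f_j}\psi$ and $\{\psi\circ\pi,(L_{f_0}\psi)\circ\pi\}=0$, the Poisson bracket expands to
\begin{equation}
\bigl( F_{001}+F_{101}\bigr)(\well_1) + L_{f_{01}}\psi(\wxu) - L_{f_0}^2\psi(\wxu) - L_{f_1}L_{f_0}\psi(\wxu),
\end{equation}
which is precisely the left-hand side of \eqref{eq: reg switch 1} and is therefore strictly positive by Assumption~\ref{ass:  reg switch}. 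The implicit function theorem then produces the desired $t_1$ with $t_1(\well_1)=\wtau_1$.

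Finally, the sign characterisation is immediate: on $\cO_1$ one has $\Psi(\wtau_1,\ell)= F_{01}(\ell) - L_{f_0}\psi(\pi\ell)$, and, shrinking $\cO_1$ if necessary, $\partial_t\Psi>0$ throughout a neighbourhood of $(\wtau_1,\well_1)$. Since $\Psi(t_1(\ell),\ell)=0$, monotonicity in $t$ forces $t_1(\ell)-\wtau_1$ and $\Psi(\wtau_1,\ell)$ to have opposite signs, yielding $t_1(\ell)\gtreqless\wtau_1$ iff $F_{01}(\ell)-L_{f_0}\psi(\pi\ell)\lesseqgtr 0$. The only delicate point is the bracket calculation for $\partial_t\Psi$: one must carefully track the contribution of $\psi\circ\pi$ inside $\Phi^-$ to recognise \eqref{eq: reg switch 1}, everything else being a direct application of IFT.
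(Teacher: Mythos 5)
Your proposal is correct and follows essentially the same route as the paper: both apply the implicit function theorem to the map $\varphi(t,\ell)=(F_{01}-(L_{f_0}\psi)\circ\pi)\circ\exp\big((t-\wtau_1)(\vec H_0+\vPhi-)\big)(\ell)$ at $(\wtau_1,\well_1)$, invoking Assumption~\ref{ass:  reg switch} to obtain $\partial_t\varphi>0$, and then deduce the sign dichotomy from monotonicity in $t$. The only difference is that you make explicit the substitution $\vec H_0(\well_1)=\vec F_0(\well_1)$ and the Poisson-bracket expansion identifying $\partial_t\varphi(\wtau_1,\well_1)$ with the left-hand side of \eqref{eq: reg switch 1}, which the paper leaves implicit.
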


\begin{proof}
The existence of the function $t_1$ is  a  straightforward  application of the implicit function theorem to the function
$\varphi(t,\ell)= (F_{01} -(L_{f_0} \psi)\circ\pi)\circ\exp((t-\wtau_1)(\vec{H}_0+\vPhi-))(\ell)$ at $(\wtau_1,\well_1)$, which is possible since
$\frac{\partial }{\partial t}\varphi(t,\ell)|_{(\wtau_1,\well_1)}>0$,
by
Assumption~\ref{ass:  reg switch}. 

The sign of $t_1(\ell)-\wtau_1$ is determined by the fact that $t_1(\ell)=\wtau_1$ for every $\ell\in S^-\cap \cO_1$, and again by Assumption~\ref{ass:  reg switch}. 
\end{proof}

We define the piecewise smooth function $\tau_1 \colon  \cO_1\to \R$ as
\begin{equation} \label{eq: wtau1}
\tau_1(\ell)=\min\{ t_1(\ell),\wtau_1\},
\end{equation}
and the flow $\cH_1  \colon  [0,\wtau_1]\times  \cO_1 \to T^*M$ as
\begin{equation} \label{eq: flusso primo arco}
\cH_1(t,\ell)=
\begin{cases}
\exp\big((t-\wtau_1) (\vec{H}_0+\vPhi-)\big) (\ell) & t\in[\tau_1(\ell),\wtau_1],\\
\exp\big((t-\tau_1(\ell))(\vec{F}_0+\vPhi-)\big)\circ\exp\big((\tau_1(\ell)-\wtau_1) (\vec{H}_0+\vPhi-	)\big) (\ell) \quad & t\in[0,\tau_1(\ell)).
\end{cases}
\end{equation}
\begin{remark}
 Clearly, if  $t_1(\bar\ell)\geq \wtau_1$ for some $\bar\ell$, then 
 $\cH_1(t,\bar\ell)$ is the flow of $\vec{F}_0+\vPhi-$ for every $t\in[0,\wtau_1]$.
\end{remark}

\begin{proposition} \label{pro: primo arco}
The flow $\cH_1$ defined above is $C^1$. Moreover
\begin{equation} \label{eq: phi primo arco}
\Phi^-(\cH_1(t,\ell))=0 \quad \forall t\in[\tau_1(\ell),\wtau_1], \qquad 
\Phi^-(\cH_1(t,\ell))>0 \quad \forall t\in[0,\tau_1(\ell)).
\end{equation}
In particular 
\begin{equation} \label{eq: flusso sotto 1}
\big( \pi \cH_1(t,\cdot)\big)_*|_{\well_1}=\wh{S}_{t*}\pi_* \quad \forall t\leq \wtau_1. 
\end{equation}

\end{proposition}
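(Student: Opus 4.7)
The plan is to prove the three distinct claims in turn: the sign conditions for $\Phi^-$ in \eqref{eq: phi primo arco}, the $C^1$-regularity of $\cH_1$, and the differential identity \eqref{eq: flusso sotto 1}. The main obstacle will be the strict positivity of $\Phi^-$ just to the left of the switching surface $\{t=\tau_1(\ell)\}$, since at the switching point $\Phi^-$ vanishes and its first time-derivative along the flow is nonpositive; the argument must therefore rely on the second-order information supplied by Assumption~\ref{ass:  reg switch}.

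First I would deduce $\Phi^-(\cH_1(t,\ell))=0$ on $[\tau_1(\ell),\wtau_1]$ from Proposition~\ref{pro: H0}(3) with $\upsilon\equiv 1$: the field $\vec H_0+\vPhi{-}$ is tangent to $\Sigma^-$, so a trajectory starting from $\ell\in\cO_1\subset\Sigma^-$ stays in $\Sigma^-$. For the strict positivity on $[0,\tau_1(\ell))$, set $p_\ell:=\cH_1(\tau_1(\ell),\ell)$. By Lemma~\ref{lemma:  t1}, either $p_\ell\in S^-$ (when $\tau_1(\ell)=t_1(\ell)<\wtau_1$) or $p_\ell=\ell$ with $(F_{01}-L_{f_0}\psi\circ\pi)(\ell)\leq 0$ (when $\tau_1(\ell)=\wtau_1$). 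Thanks to Assumption~\ref{ass:  segni} we may write $\Phi^-=F_1-\psi\circ\pi$ in the relevant region, so the function
\[
\varphi_\ell(t):=\Phi^-\bigl(\exp((t-\tau_1(\ell))(\vec F_0+\vPhi{-}))(p_\ell)\bigr)
\]
satisfies $\varphi_\ell(\tau_1(\ell))=0$ and $\dot\varphi_\ell(\tau_1(\ell))=(F_{01}-L_{f_0}\psi\circ\pi)(p_\ell)\leq 0$ in both cases (with equality when $p_\ell\in S^-$). At the reference point $\ell=\well_1$ a direct computation gives $\ddot\varphi_{\well_1}(\wtau_1)=\frac{d^2}{dt^2}\Phi^-(\wla(t))|_{t=\wtau_1^-}$, which is strictly positive by Assumption~\ref{ass:  reg switch}; by continuity $\ddot\varphi_\ell(\tau_1(\ell))>0$ uniformly for $\ell$ in a small $\cO_1$, so the combination of $\dot\varphi_\ell(\tau_1(\ell))\leq 0$ and $\ddot\varphi_\ell(\tau_1(\ell))>0$ yields $\varphi_\ell(t)>0$ on a uniform left neighbourhood $[\tau_1(\ell)-\delta,\tau_1(\ell))$. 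On the complementary compact subinterval $[0,\wtau_1-\delta]$ one has $\Phi^-(\wla(t))\geq c>0$ by Assumption~\ref{ass:  struttura arco}, so continuity of $\cH_1$ in $\ell$ gives $\Phi^-(\cH_1(t,\ell))>0$ there too, possibly after further shrinking $\cO_1$.

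For the $C^1$-regularity, $\cH_1$ is smooth on each of the two open regions $\{t<\tau_1(\ell)\}$ and $\{t>\tau_1(\ell)\}$, so only the gluing at $\{t=\tau_1(\ell)\}$ needs attention. At any such switching point $\cH_1(\tau_1(\ell),\ell)\in S^-$ by Lemma~\ref{lemma:  t1}, and Proposition~\ref{pro: H0}(1) yields $\vec H_0=\vec F_0$ there; the two governing fields $\vec H_0+\vPhi{-}$ and $\vec F_0+\vPhi{-}$ therefore coincide at the interface, so the one-sided time-derivatives of $\cH_1$ agree and the piecewise flow is jointly $C^1$ in $(t,\ell)$ despite the merely Lipschitz dependence of $\tau_1$ on $\ell$. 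Finally, for \eqref{eq: flusso sotto 1}: the first-order necessary conditions recalled just before Assumption~\ref{ass:  reg switch} give $(F_{01}-L_{f_0}\psi\circ\pi)(\well_1)=0$, so $\well_1\in S^-$ and $\tau_1(\well_1)=\wtau_1$. Thus $\cH_1(t,\well_1)=\exp((t-\wtau_1)(\vec F_0+\vPhi{-}))(\well_1)$ for $t\in[0,\wtau_1]$. Since the Hamiltonian lift of $\psi\circ\pi$ is vertical (hence projects to $0$ on $M$), the field $\vec F_0+\vPhi{-}$ projects to $f_0+f_1=\wh f_t$ on $[0,\wtau_1)$; consequently $\pi\circ\exp((t-\wtau_1)(\vec F_0+\vPhi{-}))=\wh S_t\circ\pi$, and differentiating at $\well_1$ yields \eqref{eq: flusso sotto 1}.
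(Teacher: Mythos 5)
Your proof is correct and follows essentially the same approach as the paper: invariance of $\Sigma^-$ under $\vec H_0+\vPhi{-}$ gives the vanishing on $[\tau_1(\ell),\wtau_1]$, a Taylor expansion of $\Phi^-$ at the switching time combined with Lemma~\ref{lemma:  t1} and Assumption~\ref{ass:  reg switch} gives strict positivity near $\tau_1(\ell)$, and compactness/continuity covers $[0,\tau_1(\ell)-\delta]$. Your unified treatment, using $\dot\varphi_\ell\leq 0$ together with $\ddot\varphi_\ell>0$, is a cleaner packaging of the paper's two-case split (first-order Taylor when $F_{01}-L_{f_0}\psi\circ\pi<0$, second-order when $p_\ell\in S^-$), and you usefully make explicit the derivation of \eqref{eq: flusso sotto 1} from $\well_1\in S^-$, which the paper states without argument.
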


\begin{proof}
At every point $(t,\ell)$ such that $t\neq \tau_1(\ell)$, the flow is well defined and smooth. Therefore,	
to prove its regularity on the whole $[0,\wtau_1]\times \cO_1$, it suffices to verify the continuity of its derivatives at points of the form $(t,\ell)=(\tau_1(\bar\ell),\bar\ell)$.
In particular, we can distinguish two cases, that is, $t_1({\bar\ell})>\wtau_1$ and $t_1(\bar\ell)\leq \wtau_1$. In the former case, the flow coincides with 
$\exp((t-\wtau_1) (\vec{F}_0+\vPhi-))(\ell)$ for every $t\in[0,\wtau_1]$ and for every $\ell$ in a neighbourhood of $\bar{\ell}$ in $\Sigma^-$, thus it is $C^1$.

If instead $\tau_1(\bar\ell)\leq\wtau_1$, then the flow starting from points $\ell$ in a neighbourhood of $\bar{\ell}$  has different expressions according to the sign of $t-\tau_1(\ell)$. However, by straightforward computations, it is
easy to prove that they coincide as $(t,\ell)\to(\tau_1(\bar\ell),\bar\ell)$, for every $\bar \ell\in \cO$  and that the first order partial derivatives are continuous.

Let us now prove equation~\eqref{eq: phi primo arco}. First of all, we recall that $\frac{\partial}{\partial t}\Phi^-(\cH_1(t,\ell))|_{t=\tau_1(\ell)}=
(F_{01} -(L_{f_0} \psi)\circ\pi)
)\circ\cH_1(t,\ell)|_{t=\tau_1(\ell)}$.

If $t_1(\ell)>\wtau_1$, then $\tau_1(\ell)=\wtau_1$ and, by Lemma~\ref{lemma:  t1},  
$F_{01} -(L_{f_0} \psi)\circ\pi<0$, so that, if $\ell$ is close enough to $\well_1$,  equation ~\eqref{eq: phi primo arco} follows immediately from a first order Taylor expansion, with respect to the first variable, at $t = \wtau_1$.

Let us now consider the case in which $t_1(\ell)\leq\wtau_1$.
By construction, $\vec{H}_0+\vPhi-$ is tangent to $\Sigma^-$, so that  $\Phi^-(\cH_1(t,\ell))=0$ 
for $ t\in[\tau_1(\ell),\wtau_1]$.
 At $t=\tau_1(\ell),$ $\mathcal{H}_1(t,\ell)$ is in $S^-$, that is, $(F_{01} -(L_{f_0} \psi)\circ\pi)
)\circ\cH_1(t,\ell)|_{t=\tau_1(\ell)}=0$, so that we must look at the second order Taylor expansion
of $t\mapsto \Phi^-(\cH_1(t,\ell))$ at $\wtau_1$.
By Assumption~\ref{ass:  reg switch},  the second order derivative of this map at $(t,\ell)=(\wtau_1,\well_1)$ is strictly positive, so that,  by continuity, it is strictly positive also at $(\tau_1(\ell),\ell)$, for $\ell$ close enough to $\well_1$.

We can conclude that there exists a $\epsilon >0$ and a neighbourhood $\cO_1$ of $\well_1$ in
$\Sigma^-$ such that $\Phi^-(\mathcal{H}_1(t,\ell))>0$ for $t\in(\tau_1(\ell)-\epsilon,\tau_1(\ell))$, for every $\ell\in \mathcal{O}_1$. 
Possibly shrinking $\cO_1$, we can conclude that 
the inequality is satisfied for every $t\in[0,\tau_1(\ell))$.
\end{proof}

\paragraph{The singular arc.}
We recall that for any $t \in [\wtau_1, \wtau_2]$, the reference extremal $\wla$ takes values in $S^-$. Moreover, thanks to Assumption~\ref{ass:  sglc} and since $\frac{\ud^2}{\ud t^2}\Phi^-(\wla(t))=0$ for any $t \in (\wtau_1, \wtau_2)$, the reference control along the singular arc can be computed in a {\em feedback Hamiltonian form}. More precisely
\begin{equation}
\wu(t) = -\frac{F_{001}(\ell) - \liededo{f_0}{\psi}{\pi \ell}}{\mathbb{L}(\ell)} \vert_{\ell = \wla(t)}\qquad \forall t \in (\wtau_1, \wtau_2).
\end{equation}
In a neighbourhood of $\wla([\wtau_1,\wtau_2])$ in $S^-$, we thus define 
\begin{equation} \label{feedback}
\nu(\ell)=-\frac{F_{001}(\ell) - \liededo{f_0}{\psi}{\pi \ell}}{\mathbb{L}(\ell)}.
\end{equation}
We extend $\nu$ to a neighbourhood of $\wla([\wtau_1,\wtau_2])$ in $\Sigma^-$ by setting it constant along the integral lines of 
$\vPhi-$, and then to a full-measure neighbourhood of the range of the singular arc by setting it constant along the integral lines of the Hamiltonian field associated with 
$F_{01}-L_{f_0}\psi$~\footnote{Indeed, thanks to Assumption \ref{ass:  sglc},  $T_{\ell}(T^*M)=T_{\ell}\Sigma^-\oplus 
	\overrightarrow{F_{01}-L_{f_0}\psi\circ \pi}(\ell)$  for every $\ell \in \Sigma^-$ in a neighbourhood of $\wla([\wtau_1, \wtau_2])$.  }.

We set
\begin{equation}
K(\ell)=
H_0(\ell) +\nu(\ell) \Phi^-(\ell) , 
\end{equation}
and 
define the over-maximised flow on the interval $[\wtau_1,\wtau_2]$ as the flow of $\vec{K}$:
\begin{equation}
\cK(t,\ell)=\exp((t-\wtau_1)\vec{K})(\ell). 
\end{equation}

\begin{proposition} \label{pro: K tangente}
The manifolds $\Sigma^-$ and $S^-$ are invariant under the action of the flow of $K$.	
Moreover, $\vPhi-$ is invariant with respect to the flow of $K$ on $\Sigma^-$, that is, for $\ell$ belonging to a small neighbourhood of $\wla([\wtau_1,\tau_2])$ in $\Sigma^-$, it holds
\begin{equation}\label{eq:invarianza}
\cK(t,\ell)_{*}\vPhi-(\ell)=\vPhi-\circ\cK(t,\ell)
\qquad 
\forall t\in[\wtau_1,\wtau_2].
\end{equation}
\end{proposition}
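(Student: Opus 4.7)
The proof splits into three claims, which I would verify in turn by Poisson-bracket computations that exploit the specific form $K = H_0 + \nu\Phi^-$ and the way $\nu$ is extended off $S^-$. The first claim, that $\Sigma^-$ is invariant under $\cK$, is immediate from Proposition~\ref{pro: H0}(3): for any smooth function $\upsilon$, the Hamiltonian vector field of $H_0 + \upsilon\Phi^-$ is tangent to $\Sigma^-$, and specialising to $\upsilon = \nu$ gives the claim.

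For the invariance of $S^-$, set $G(\ell) := F_{01}(\ell) - L_{f_0}\psi(\pi\ell)$, so that $S^- = \Sigma^-\cap\{G = 0\}$. Since $\Sigma^-$ is invariant, the task reduces to showing $G\circ\cK(t,\ell) \equiv 0$ whenever $\ell\in S^-$. I would expand, by Leibniz,
\[
\{G, K\} = \{G, H_0\} + \nu\{G,\Phi^-\} + \Phi^-\{G,\nu\},
\]
drop the third term on $\Sigma^-$, and evaluate the other two. A direct bracket calculation (using $\{F_{01}, F_1\} = -F_{101}$, $\{F_{01},\psi\circ\pi\} = L_{f_{01}}\psi\circ\pi$ and $\{L_{f_0}\psi\circ\pi,F_1\} = -L_{f_1}L_{f_0}\psi\circ\pi$) gives the identity $\{G,\Phi^-\} \equiv -\mathbb{L}$ throughout the neighbourhood. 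At a point $\ell\in S^-$ one has $\vartheta(\ell) = 0$ and $\{F_0,\Phi^-\}(\ell) = G(\ell) = 0$, so differentiating the definition $H_0 = F_0\circ\exp(\vartheta(\cdot)\vPhi-)$ at $\ell$ yields $dH_0(\ell) = dF_0(\ell)$, whence $\{G,H_0\} = \{G,F_0\} = -F_{001} + L_{f_0}^2\psi\circ\pi$ on $S^-$. Plugging in the feedback $\nu = -(F_{001} - L_{f_0}^2\psi\circ\pi)/\mathbb{L}$ makes $\{G, K\}$ vanish identically on $S^-$. Since $G$ has nonvanishing differential on $S^-$ inside $\Sigma^-$, the function $\{G,K\}|_{\Sigma^-}$ can be written locally as $c(\ell)\,G(\ell)$ for some smooth $c$, and the scalar ODE satisfied by $t\mapsto G(\cK(t,\ell))$ with vanishing initial datum forces $G\circ\cK(t,\ell)\equiv 0$ by Gronwall.

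For the invariance relation \eqref{eq:invarianza}, the plan is to show $[\vec K,\vPhi-] \equiv 0$ at every point of $\Sigma^-$. Leibniz, together with $\{\Phi^-,\Phi^-\} = 0$, yields
\[
\{K,\Phi^-\} = \{H_0,\Phi^-\} + \{\nu,\Phi^-\}\Phi^-,
\]
and the first term vanishes identically in $\cU$ because, by Lemma~\ref{le:theta}, $H_0 = F_0\circ\exp(\vartheta(\cdot)\vPhi-)$ is constant along the integral curves of $\vPhi-$; hence $\{K,\Phi^-\} = \{\nu,\Phi^-\}\Phi^-$ globally. Passing to Hamiltonian vector fields via $\overrightarrow{fg} = f\vec g + g\vec f$ and restricting to $\Sigma^-$ yields $\overrightarrow{\{K,\Phi^-\}}|_{\Sigma^-} = \{\nu,\Phi^-\}\vPhi-|_{\Sigma^-}$. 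Since $\nu$ was extended from $S^-$ by being constant along the integral lines of $\vPhi-$ inside $\Sigma^-$, the coefficient $\{\nu,\Phi^-\} = \pm L_{\vPhi-}\nu$ vanishes on $\Sigma^-$. Therefore $[\vec K,\vPhi-] \equiv 0$ on $\Sigma^-$, and because $\cK(t,\cdot)$ preserves $\Sigma^-$, the standard identity expressing the derivative of a pushed-forward vector field in terms of the Lie bracket integrates along the orbit of $\ell$ and yields \eqref{eq:invarianza}.

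The main obstacle I anticipate is the bookkeeping in the Poisson-bracket computations: verifying carefully that $\{G,\Phi^-\} \equiv -\mathbb{L}$ and $\{H_0,\Phi^-\} \equiv 0$ as identities on the whole neighbourhood $\cU$ (not merely on $\Sigma^-$ or $S^-$), and confirming that it is the extension of $\nu$ along $\vPhi-$, rather than only its restriction to $S^-$, that makes $\{\nu,\Phi^-\}$ vanish on all of $\Sigma^-$. Once those two identities are in place, the remainder is a combination of a scalar ODE uniqueness argument and the classical invariance-under-commuting-flows principle.
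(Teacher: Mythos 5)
Your proof is correct and, for the core claim~\eqref{eq:invarianza}, it follows essentially the route the paper itself uses via Proposition~\ref{prop: preserva Phi}: show that $[\vec{K},\vPhi-]$ vanishes on $\Sigma^-$ and integrate along the flow. The only cosmetic difference is that you obtain $\{H_0,\Phi^-\}\equiv 0$ directly from the observation that $H_0=F_0\circ\exp(\vartheta(\cdot)\vPhi-)$ is constant along the integral lines of $\vPhi-$, whereas the paper derives the equivalent vector-field identity $[\vec{H}_0,\vPhi-]=0$ by unwinding~\eqref{eq: wF} and using $\langle d\vartheta,\vPhi-\rangle=-1$; both are the same structural fact written at the function versus vector-field level, and both rely, as you correctly stress, on $\nu$ having been extended so as to be constant along $\vPhi-$-lines inside $\Sigma^-$. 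The invariance of $\Sigma^-$ is obtained in both treatments from Proposition~\ref{pro: H0}(3).

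The place where your write-up genuinely adds something is the explicit verification that $S^-$ is invariant. The paper delegates the entire proposition to Proposition~\ref{prop: preserva Phi}, but that proposition is stated for an \emph{arbitrary} multiplier $\upsilon$ and therefore cannot by itself give the invariance of $S^-$: your own computation makes the point, since on $S^-$ one has $\{G, H_0+\upsilon\Phi^-\}=(\nu-\upsilon)\mathbb{L}$, which vanishes precisely because $\upsilon$ is taken to be the feedback $\nu$ of~\eqref{feedback}. Your chain of identities ($\{G,\Phi^-\}\equiv -\mathbb{L}$ on the neighbourhood; $dH_0=dF_0$ and hence $\{G,H_0\}=\{G,F_0\}=\nu\mathbb{L}$ on $S^-$), followed by the Hadamard factorisation of $\{G,K\}|_{\Sigma^-}$ and a linear scalar ODE uniqueness argument, correctly fills in this step. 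It is not decorative: the tangency of $\vec{K}$ to $S^-$ is invoked in the strictness part of the proof of Theorem~\ref{th: main result}.
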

This result is proved (in a more general version) in Proposition \ref{prop: preserva Phi}.

\paragraph{The inactivated arc.}
The construction of the over-maximised flow on a right hand side neighbourhood of $\wtau_2$
presents the same issues as its construction on $[0,\wtau_1]$, thus 
we overcome these difficulties likewise.

\begin{lemma}
Possibly shrinking $\cO_1$ and setting $\cO_2=\cK(\wtau_2,\cO_1)$, there exists a smooth function $t_2 \colon \cO_2\to \R$ satisfying $t_2(\well_2)=\wtau_2$ such that
\begin{equation}
(F_{01} -(L_{f_0} \psi)\circ\pi)\circ\exp((t_2(\ell_2)-\wtau_2)\vec{H}_0)(\ell_2)=0 \quad \forall \ell_2\in \cO_2.
\end{equation}
Moreover, $t_2(\ell_2)\gtreqless \wtau_2$ if and only if $F_{01}(\ell_2)-L_{f_0}\psi(\pi \ell_2)\gtreqless 0$.
\end{lemma}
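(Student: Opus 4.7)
The plan is to mimic verbatim the proof of Lemma~\ref{lemma:  t1}, just swapping the role of the regularity condition at $\wtau_1$ for the one at $\wtau_2$ (equation~\eqref{eq: reg switch 2}). Concretely, I would apply the implicit function theorem to the smooth function
\begin{equation}
\varphi_2(t,\ell_2)=(F_{01}-L_{f_0}\psi\circ\pi)\circ\exp\bigl((t-\wtau_2)\vec{H}_0\bigr)(\ell_2)
\end{equation}
at the point $(\wtau_2,\well_2)$. Since the reference extremal lies on $S^-$ for every $t\in[\wtau_1,\wtau_2]$, in particular $\well_2\in S^-$, hence $\varphi_2(\wtau_2,\well_2)=0$. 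This provides the vanishing condition required by the implicit function theorem.

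Next I would check that $\partial_t\varphi_2(\wtau_2,\well_2)\neq 0$. Because $\well_2\in S^-$, Proposition~\ref{pro: H0}(1) gives $\vec{H}_0(\well_2)=\vec{F}_0(\well_2)$, so
\begin{equation}
\partial_t\varphi_2(\wtau_2,\well_2)=\{F_0,\,F_{01}-L_{f_0}\psi\circ\pi\}(\well_2)=F_{001}(\well_2)-L_{f_0}^2\psi(\wx_2),
\end{equation}
which by Assumption~\ref{ass:  reg switch}, equation~\eqref{eq: reg switch 2} (recall $a_2=1$), coincides with $\frac{d^2}{dt^2}\Phi^-(\wla(t))|_{t=\wtau_2^+}<0$, hence is nonzero. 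The implicit function theorem then yields a neighbourhood $\cO_2$ of $\well_2$ in $\Sigma^-$ (obtained, as stated, by possibly shrinking $\cO_1$ and pushing it forward by $\cK(\wtau_2,\cdot)$; note that $\cK$ preserves $\Sigma^-$ by Proposition~\ref{pro: K tangente}) and a smooth function $t_2\colon\cO_2\to\R$ with $t_2(\well_2)=\wtau_2$ and $\varphi_2(t_2(\ell_2),\ell_2)=0$ for every $\ell_2\in\cO_2$.

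Finally, the sign assertion follows from the sign of $\partial_t\varphi_2$: since $\partial_t\varphi_2(\wtau_2,\well_2)<0$, by continuity the map $t\mapsto\varphi_2(t,\ell_2)$ is strictly decreasing near $t=\wtau_2$ for $\ell_2$ close to $\well_2$. Thus $\varphi_2(t_2(\ell_2),\ell_2)=0$ can be attained at a time $t_2(\ell_2)>\wtau_2$ only if $\varphi_2(\wtau_2,\ell_2)=F_{01}(\ell_2)-L_{f_0}\psi(\pi\ell_2)>0$, and conversely; the zero case is immediate. This gives the stated equivalence. There is no real obstacle here: the argument is a routine transcription of Lemma~\ref{lemma:  t1}, the only point worth verifying carefully being the identification $\vec{H}_0(\well_2)=\vec{F}_0(\well_2)$ that allows the derivative to be computed as an ordinary Poisson bracket with $F_0$ and matched with the hypothesis~\eqref{eq: reg switch 2}.
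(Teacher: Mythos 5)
Your proof is correct and is precisely what the paper leaves implicit: the lemma is stated without proof because it is obtained by transposing the argument of Lemma~\ref{lemma:  t1} to the switching point $\wtau_2$, replacing $\vec{H}_0+\vPhi{-}$ by $\vec{H}_0$ and the regularity condition~\eqref{eq: reg switch 1} by~\eqref{eq: reg switch 2}. You also correctly observe that the change of sign of $\partial_t\varphi_2(\wtau_2,\well_2)$ (negative here, positive at $\wtau_1$) is exactly what reverses the inequality in the sign assertion relative to Lemma~\ref{lemma:  t1}, and that the identification $\vec H_0(\well_2)=\vec F_0(\well_2)$ on $S^-$ from Proposition~\ref{pro: H0}(1) is the key step that turns the $t$-derivative into the Poisson bracket matching~\eqref{eq: reg switch 2}.
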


As above, we define the piecewise smooth function
\begin{equation} \label{eq: wtau2}
\tau_2(\ell_2)=\max\{ t_2(\ell_2),\wtau_2\},
\end{equation}
and the flow $\cH_3$ for $t\geq \wtau_2$ as
\begin{equation} \label{eq: flusso arco inattivo}
\cH_3(t,\ell)=
\begin{cases}
\exp((t-\wtau_2) \vec{H}_0) (\ell_2) & t\in[\wtau_2,\tau_2(\ell_2)],\\
\exp((t-\tau_2(\ell_2))\vec{F}_0)\circ
\exp((\tau_2(\ell_2)-\wtau_2) \vec{H}_0) 
(\ell_2) & t\in[\tau_2(\ell_2),\tau_2(\ell_2)+\delta(\ell_2))
\end{cases}
\end{equation}
where $\ell_2=\cK(\wtau_2,\ell)$
and $\delta(\cdot)$ is a positive function that will be specified here below. The flow $\cH_3$ enjoys the same properties of $\cH_1$, as stated in the following proposition.

\begin{proposition} \label{pro: C1 at tau2}
The flow $\cH_3$ defined above is $C^1$ and
\begin{equation}
\Phi^-(\cH_3(t,\ell))=0 \quad \forall t\in[\wtau_2(\ell),\tau_2(\ell_2)], \qquad 
\Phi^-(\cH_3(t,\ell))<0 \quad \forall t\in(\tau_2(\ell_2),\tau_2(\ell_2)+\delta(\ell_2)).
\end{equation}
Moreover, for every $t\in[\wtau_2,\wtau_2+\delta(\well_2))$ it holds
\begin{equation} \label{eq: flusso sotto 3}
\big(\pi \cH_3(t,\cdot)\big)_*|_{\well_1}=
\exp\big((t-\wtau_2)f_0 \big)_* {\big(\pi\cK_{\wtau_2}\big)_*}|_{\well_1}.
\end{equation} 
\end{proposition}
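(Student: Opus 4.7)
The plan is to mirror, step by step, the proof of Proposition~\ref{pro: primo arco}, transplanted to a right-hand neighbourhood of $\wtau_2$. The geometric picture is symmetric: whereas $\cH_1$ was built backward from $\wtau_1$ entering the bang region $\{\Phi^->0\}$, $\cH_3$ is built forward from $\wtau_2$ entering the inactivated region $\{\Phi^-<0\}$, with the switching surface $S^-$ governing the matching and Proposition~\ref{pro: H0} supplying the relevant geometric facts about $\vec{H}_0$. I would prove, in order, the $C^1$ regularity, the sign of $\Phi^-$, and the linearisation identity~\eqref{eq: flusso sotto 3}.

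For the $C^1$ regularity the only delicate point is continuity of the partial derivatives at the matching time $t=\tau_2(\ell_2)$. When $t_2(\bar\ell_2)>\wtau_2$, the matching point $\cH_3(\tau_2(\bar\ell_2),\bar\ell)$ lies in $S^-$ by construction, so Proposition~\ref{pro: H0}(1) gives $\vec{H}_0=\vec{F}_0$ at that point, ensuring continuity of $\partial_t\cH_3$ across the join; continuity of $\partial_\ell\cH_3$ follows by the chain rule, using the smoothness of $\tau_2$ provided by the implicit function theorem (the analogue of Lemma~\ref{lemma: t1}). When $t_2(\bar\ell_2)\leq\wtau_2$, the first branch degenerates in a neighbourhood and $\cH_3$ reduces locally to the single smooth flow $\exp((t-\wtau_2)\vec{F}_0)(\ell_2)$, so there is nothing to check.

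For the sign of $\Phi^-$, on $[\wtau_2,\tau_2(\ell_2)]$ the dynamics is $\vec{H}_0$, tangent to $\Sigma^-$ by Proposition~\ref{pro: H0}, so $\Phi^-\circ\cH_3\equiv 0$. For $t>\tau_2(\ell_2)$ the flow is driven by $\vec{F}_0$ starting from a point of $S^-$; hence both $\Phi^-$ and its first time-derivative $\{F_0,\Phi^-\}=F_{01}-L_{f_0}\psi\circ\pi$ vanish at $t=\tau_2(\ell_2)$, and the second time-derivative equals $F_{001}-L_{f_0}^2\psi\circ\pi$, which is strictly negative at $\well_2$ by \eqref{eq: reg switch 2} and, by continuity, on a neighbourhood. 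A second-order Taylor expansion then defines a positive function $\delta(\ell_2)$ on which $\Phi^-(\cH_3(t,\ell))<0$. The subcase $\tau_2(\ell_2)=\wtau_2$ with $F_{01}(\ell_2)-L_{f_0}\psi(\pi\ell_2)<0$ is simpler, as the first derivative of $\Phi^-\circ\cH_3$ at $\wtau_2$ is already strictly negative.

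Finally, for \eqref{eq: flusso sotto 3} I would differentiate $\cH_3(t,\cdot)$ at $\well_1$. Since $\tau_2(\well_2)=\wtau_2$, the reference curve coincides with $\exp((t-\wtau_2)\vec{F}_0)(\well_2)$. Expanding the second branch of \eqref{eq: flusso arco inattivo} to first order along a perturbation and using $\vec{H}_0(\well_2)=\vec{F}_0(\well_2)$, the intermediate segment $\exp(r(\ell)\vec{H}_0)$, with $r(\ell)=\tau_2(\cK_{\wtau_2}(\ell))-\wtau_2=O(\ell-\well_1)$, produces the same first-order contribution as $\exp(r(\ell)\vec{F}_0)$ would, and the latter is exactly absorbed into the $\exp((t-\wtau_2-r)\vec{F}_0)$ factor. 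Hence $(\cH_3(t,\cdot))_*|_{\well_1}=\exp((t-\wtau_2)\vec{F}_0)_*(\cK_{\wtau_2})_*|_{\well_1}$; the standard identity $\pi_*\exp(s\vec{F}_0)_*=\exp(sf_0)_*\pi_*$ for Hamiltonian lifts then yields the claim. The main obstacle is precisely this cancellation: it requires $\vec{H}_0$ and $\vec{F}_0$ to agree as \emph{vector fields} on $S^-$, not merely that $H_0=F_0$ there as functions, which is the content of Proposition~\ref{pro: H0}(1).
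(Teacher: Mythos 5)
Your proof is correct and follows the route the paper implicitly prescribes: the proposition is stated without a proof, deferring to the analogy with Proposition~\ref{pro: primo arco}, and your argument correctly transplants that proof to the forward direction, entering $\{\Phi^-<0\}$ through $S^-$ and invoking~\eqref{eq: reg switch 2} in place of~\eqref{eq: reg switch 1}. You also supply an explicit derivation of the linearisation identity~\eqref{eq: flusso sotto 3} — which the paper leaves tacit for both $\cH_1$ and $\cH_3$ — correctly isolating the cancellation $\vec{H}_0|_{S^-}=\vec{F}_0|_{S^-}$ from Proposition~\ref{pro: H0}(1) as the essential step.
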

We remark that, thanks to Assumption~\ref{ass:  nonzero at switch} and by continuity, $\Phi^+(\cH_3(t,\ell))>0$ on $[\wtau_2,\wtau_2+\delta(\ell_2))$.

\paragraph{The last bang arc.}
For $t\geq\tau_2(\ell_2)$, $F_0$ is the maximised Hamiltonian until its integral curves
hit the switching surface $\{\Phi^+=0\}$. To detect the hitting time, we  solve
the implicit equation 
\begin{equation} \label{eq: def tau3}
\Phi^+\circ \exp\big((t-\tau_2(\ell_2)) \vec{F}_0\big)\circ\exp\big((\tau_2(\ell_2)-\wtau_2) \vec{H}_0\big)(\ell_2) =0,
\end{equation}
where we recall that $\ell_2=\cK(\wtau_2,\ell)\in \cO_2$.
The derivative with respect to $t$ of the left  hand side of \eqref{eq: def tau3} equals $\Rtre$
for $(t,\ell_2)=(\wtau_3,\well_2)$; thanks to Assumption~\ref{ass:  reg switch} and 
the implicit function theorem, we obtain that equation \eqref{eq: def tau3} is satisfied if and only if $(t,\ell_2)=(\tau_3(\ell_2),\ell_2)$, where $\tau_3 \colon  \cO_2 \to \R$
is a smooth function satisfying $\tau_3(\well_2)=\wtau_3$. In addition, for every $\dl \in T_{\well_2} (T^*M)$, it holds
\begin{align} \label{eq:dtau3}
\scal{\ud\tau_3(\well_2)}{\dl} &= 
 -\frac{\boldsymbol{\sigma}_{\well_3}\big( \exp\big((\wtau_3-\wtau_2)\vec{F}_0\big)_* \dl,\vPhi+\big) }{\Rtre}.
\end{align}

We choose  $\delta(\cdot) =\tau_3(\cdot)-\tau_2 (\cdot)$ in equation \eqref{eq: flusso arco inattivo}, and we consider
the following flow for $t\in[\wtau_2,T]$:
\begin{equation}
\cH_3(t,\ell)	=
\begin{cases}
\exp\big((t-\wtau_2) \vec{H}_0\big)(\ell_2) & t\in [\wtau_2,\tau_2(\ell_2)]\\
\exp\big((t-\tau_2(\ell)) \vec{F}_0\big)\circ\cH_3(\tau_2(\ell_2),\ell_2) & t \in[\tau_2(\ell_2),
\tau_3(\ell_2)]\\
\exp\big((t-\tau_3(\ell_2)) (\vec{F}_0-\vPhi+)\big)\circ
\cH_3(\tau_3(\ell_2),\ell_2) & 
t \geq
\tau_3(\ell_2).
\end{cases} 
\end{equation}
Thanks to the regularity assumptions, $\cH_3$ is an over-maximised flow for every $t\in[\wtau_2,T]$.

\medskip
Finally, the over-maximised flow $\cH  \colon  [0,T] \times \cO_1 \to T^*M$ is defined as
\begin{equation}
\cH(t,\ell)=
\begin{cases}
\cH_1(t,\ell) \quad & t\in [0,\wtau_1],\\
\cK(t,\ell) & t\in [\wtau_1,\wtau_2],\\
\cH_3(t,\ell) & t\in [\wtau_2,T].
\end{cases}
\end{equation}
Here below, we will also use the notations
\begin{equation}
\cH_t=\cH(t,\cdot),\qquad \cK_t=\cK(t,\cdot).
\end{equation}

We remark that, for every $t\in[0,T]$,  $\cH_t$ is the Hamiltonian flow associated with the Hamiltonian
\begin{equation}
H_t(\ell)=\begin{cases}
F_0(\ell)+\Phi^-(\ell) \quad & t\in[0, \tau_1(\ell)],\\
H_0(\ell)+\Phi^-(\ell) & t\in [\tau_1(\ell),\wtau_1],\\
K(\ell) & t\in [\wtau_1,\wtau_2],\\
H_0(\ell) & t\in [\wtau_2,\tau_2(\ell)],\\
F_0(\ell) & t\in[\tau_2(\ell), \tau_3(\ell)],\\
F_0(\ell)-\Phi^+(\ell) & t\in[\tau_3(\ell),T].
\end{cases}
\end{equation}

\section{Invertibility} \label{sec:invertibility}
In order to define a manifold $\Lambda_1$ such that $\pi\cH_t \colon \Lambda_1 \to M$ is locally one-to-one for every $t$, we shall also exploit the coerciveness of the extended second variation, i.e.~Assumption~\ref{ass:  coerc}. So, from now, we assume that all the Assumptions~\ref{ass:  nonzero at switch}-\ref{ass:  coerc} are satisfied.
We define $\Lambda_1$ by means of the functions  $\alpha$ and $\theta$ appearing in \eqref{eq: Jssecondo}: namely, we consider the Lagrangian submanifold
\begin{equation}
\label{Lambda 1}
\Lambda_1= \{\ud(\alpha+\theta)(q)  \colon  q \in U_{\wxu}\}.
\end{equation}
\begin{remark}
It is immediate to see that 
 $\Lambda_1\subset\Sigma^-$ and that $\vPhi-(\ell)\in T_{\ell}\Lambda_1$ 
for every $\ell\in \Lambda_1$. 
\end{remark}

This section is devoted to the proof of the following result.

\begin{proposition}
For every $t\in[0,T]$, $t \neq \wtau_3$, the flow $\pi\cH_t  \colon  \Lambda_1 \to M$ is a  local diffeormorphism from a neighbourhood of $\well_1$ onto a neighbourhood of $\wxi(t)$.   $\pi\cH_{\wtau_3}\colon  \Lambda_1 \to M$ is a locally invertible Lipschitz continuous map with Lipschitz continuous inverse.
\end{proposition}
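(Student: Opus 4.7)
The plan is to combine the $C^1$-regularity of the over-maximised flow away from $\wtau_3$ with the coerciveness of $\bJ$ to invoke the inverse function theorem, and to handle $t=\wtau_3$ separately by patching two smooth branches along the hypersurface $\{\ell : \tau_3(\ell)=\wtau_3\}$. For $t\in[0,T]\setminus\{\wtau_3\}$, the flow $\cH_t$ is $C^1$ in $\ell$: the switching-time functions $\tau_1(\ell)$ and $\tau_2(\ell)$ defined in \eqref{eq: wtau1}--\eqref{eq: wtau2} are smooth, while Propositions \ref{pro: primo arco} and \ref{pro: C1 at tau2} guarantee that the branches of $\cH_t$ match to first order across $t=\tau_1(\ell)$ and $t=\tau_2(\ell)$. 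Since $\Lambda_1$ and $M$ have the same dimension $n$, it suffices to prove that $d(\pi\cH_t)_{\well_1}\big|_{T_{\well_1}\Lambda_1}\colon T_{\well_1}\Lambda_1\to T_{\wxi(t)}M$ is injective.

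I would argue by contradiction: suppose that some $\delta\ell\in T_{\well_1}\Lambda_1\setminus\{0\}$ satisfies $\pi_*\cH_{t*}\delta\ell=0$, and from it construct a non-trivial extended admissible variation $\de=(\dz,\ep,w)\in\bcW$ with $\bJ[\de]^2\leq 0$, contradicting Assumption \ref{ass:  coerc}. Set $\dz:=\pi_*\delta\ell\in T_{\wxu}M$; since $\Lambda_1$ is Lagrangian and projects diffeomorphically on $M$ near $\wxu$, the vector $\delta\ell$ is uniquely determined by $\dz$, so $\dz\neq 0$. On the singular arc, Proposition \ref{pro: K tangente} ensures that $\cK_{t*}\delta\ell$ stays tangent to $\Sigma^-$ and that $\vPhi{-}$ is $\cK_{t*}$-invariant; with respect to the splitting $T_{\cK_t(\well_1)}\Sigma^-=T_{\cK_t(\well_1)}S^-\oplus\R\,\vPhi{-}(\cK_t(\well_1))$, the scalar coefficient of $\vPhi{-}$ in $\cK_{t*}\delta\ell$ produces the control variation $w(t)$, while the perturbation of the switching time $\tau_3$ determined by $\delta\ell$ through the formula \eqref{eq:dtau3} produces $\ep$. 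A direct linearisation of the over-maximised flow shows that the curve $\zeta(t):=\pi_*\cH_{t*}\delta\ell$ satisfies the boundary conditions of \eqref{eq: variazioni ultime}, so that $\de\in\bcW$.

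The identification of the quadratic form follows the symplectic scheme of \cite{Ste07,PS11,CS10,CS16}: because $\cH_t(\Lambda_1)$ is Lagrangian and the functions $\alpha+\theta$, $\wh\gamma_T+\wS{T}^0$ encode the transversality conditions of the accessory problem, one obtains an identity of the form $\bJ[\de]^2=\tfrac12\,\dueforma{\delta\ell}{\delta\ell}$ up to Hamiltonian-exact boundary terms that vanish on $\Lambda_1$ and on the image of $\cH_t$. Evaluating this at a $\delta\ell$ with $\pi_*\cH_{t*}\delta\ell=0$ and using the skew-symmetry of $\boldsymbol\sigma$ together with the fact that the vertical fibre is Lagrangian forces $\bJ[\de]^2\leq 0$, hence (by coerciveness) $\de=0$ and $\dz=0$, the desired contradiction. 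Thus $d(\pi\cH_t)_{\well_1}$ is injective and $\pi\cH_t$ is a local diffeomorphism near $\well_1$.

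For $t=\wtau_3$ the flow $\cH_{\wtau_3}(\ell)$ is given by two different smooth formulas on the open sets $\{\tau_3(\ell)>\wtau_3\}$ and $\{\tau_3(\ell)<\wtau_3\}$, with continuous matching on $\{\tau_3(\ell)=\wtau_3\}$; each branch extends smoothly to a full neighbourhood of $\well_1$ and, by the argument above applied to $t$ slightly smaller and slightly larger than $\wtau_3$ and passed to the limit, defines a local diffeomorphism onto its image. Gluing the two one-sided $C^1$ diffeomorphisms along the common hypersurface yields a locally invertible Lipschitz map with Lipschitz inverse. The main obstacle in the plan is the symplectic identification of $\bJ[\de]^2$ with the pullback of $\boldsymbol\sigma$ along the over-maximised flow: this requires careful bookkeeping of the interplay between the feedback expression \eqref{feedback} for $\nu$, the tangency of $\vec K$ to $\Sigma^-$ and $S^-$, and the invariance \eqref{eq:invarianza} of $\vPhi{-}$, and is the kind of routine but lengthy computation that is naturally relegated to the paper's appendices.
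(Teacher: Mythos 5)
There is a genuine gap, in two places. First, the claimed symplectic identity is vacuous: you write that $\bJ[\de]^2=\tfrac12\,\dueforma{\delta\ell}{\delta\ell}$ up to boundary terms, but $\dueforma{\delta\ell}{\delta\ell}=0$ identically because $\boldsymbol\sigma$ is skew. What the paper actually proves is a conjugation identity, not a self-pairing: on $[\wtau_1,\wtau_2]$ it introduces the auxiliary Hamiltonian $\wh H_t=H_0+\wu(t)\Phi^-$ and the anti-symplectic isomorphism $\iota$ of \eqref{eq: anti sym iso}, and shows in \eqref{eq: hami hami} that the LQ Hamiltonian flow $\cH''_t$ of the accessory problem equals $\iota^{-1}\widehat{\cF}_{t*}^{-1}\wh{\cH}_{t*}\iota$. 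Then it invokes \cite[Theorem~2.6]{SZ97} — coerciveness of $\bJ|_{\bcV}$ is equivalent to $\pi\cH''_t|_{L''_{\wtau_1}}$ being one-to-one — together with Lemma~\ref{lemma:  K e H uguale}, which transfers injectivity from $\wh{\cH}_t$ to $\cK_t$ using the $\vPhi{-}$-invariance. Your contradiction argument (extract $\de$ from $\delta\ell\in\ker(\pi\cH_t)_*$ and show $\bJ[\de]^2\le 0$) is the right intuition, but it is exactly the content of the cited LQ theorem, which your self-pairing identity does not reprove.

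Second, the treatment of $t=\wtau_3$ is incorrect as stated. Two one-sided $C^1$ local diffeomorphisms that agree continuously along a hypersurface do \emph{not} in general glue to a locally invertible Lipschitz map: the two half-images can fold onto the same side. Precisely for this reason the paper invokes Clarke's inverse function theorem, which requires that \emph{every convex combination} $a\in[0,1]$ of the two one-sided tangent maps be injective, as encoded in \eqref{eq: clarke contrad}. The $a=0$ endpoint is Assumption~\ref{ass:  reg switch}, but the $a=1$ endpoint is a new, non-trivial inequality, namely \eqref{eq: coerc}, which the paper derives by computing the one-dimensional restriction $\bJ|_{\bcV^{\bot}}$ and identifying $\scal{p(\wtau_2)}{k(\wxu)}$ through the symplectic conjugation \eqref{eq: hami hami}. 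Your proposal never engages with the $\bcV^{\bot}$ computation, so the crucial use of coerciveness in the direction $\ep$ is missing. Without verifying \eqref{eq: clarke} for $a\in[0,1]$, one cannot conclude local invertibility at $\wtau_3$. Finally, a small correction to the first interval: invertibility on $[0,\wtau_1]$ does not need coerciveness at all; it is the immediate consequence of \eqref{eq: flusso sotto 1}, since $\wh S_t$ is a diffeomorphism and $\pi|_{\Lambda_1}$ is one.
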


The proof is done in several steps:  we consider separately the sub-intervals $[0,\wtau_1]$, $[\wtau_1,\wtau_2]$ and $[\wtau_2,T]$.

\medskip
\paragraph{Invertibility for $t\in[0,\wtau_1]$.}
The invertibility for $t\in[0,\wtau_1]$ is a direct consequence of equation~\eqref{eq: flusso sotto 1}.

\begin{center}
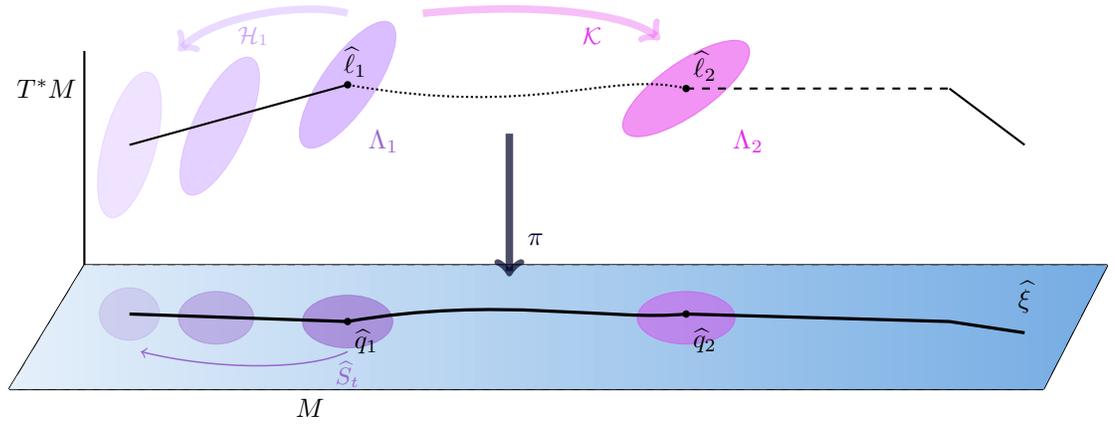
\begin{figure}[h!]
\centering
\begin{tikzpicture}[scale=0.5]
     \node[anchor=west] at (-1.7,2.6){\color{purple}$\Lambda_1$}; %

%
\draw [-,thick] (-9,-0.7)--(18.2,-0.7); %
     \draw [-,thick] (18.2,-0.7)--(16.5,-4);
     \draw [-,thick] (-11,-4)--(16.5,-4);
     \draw [-,thick] (-9,-0.7)--(-9, 5); %
     \draw [-,thick] (-9,-0.7)--(-11, -4); %
    \node[anchor=north] at (-10,4.5){$T^*M$}; %
    \node[anchor=north] at (-3,-4){$M$};

          \shade[left color=turq!10,right color=turq!70] 
    (-9,-0.7) to[out=0,in=0] (18.2,-0.7) -- (16.5,-4) to[out=0,in=0]
    (-11,-4) -- cycle;

\draw [fill=lavanda,lavanda,opacity=0.7,rotate around={55:(-2,4.1)}] (-2,4.1)  ellipse (2cm and 0.7cm);

            
\draw [fill=purple,purple,opacity=0.6] (-2,-2.2)  ellipse (1.2cm and 0.7cm);

\draw[->,color=dblue,line width=1mm,opacity=0.7] (2.3,2.8) -- (2.3,-1);
\node  at (3,0) {\color{dblue}$\pi$} ;
%

\draw [fill=lavanda,lavanda,opacity=0.5,rotate around={65:(-5.4,3)}] (-5.4,3)  ellipse (2cm and 0.7cm);             

\draw [fill=lavanda,lavanda,opacity=0.3,rotate around={75:(-7.8,2.5)}] (-7.8,2.5)  ellipse (2cm and 0.7cm);

            
\draw [fill=purple,purple,opacity=0.4] (-5.5,-2.1)  ellipse (1cm and 0.7cm); 
            
\draw [fill=purple,purple,opacity=0.2] (-7.8,-2)  ellipse (0.8cm and 0.7cm);

\draw [fill=darkpink,darkpink,opacity=0.5,rotate around={35:(7,4)}] (7,4)  ellipse (2cm and 0.7cm);
            
    \node[anchor=west] at (8,2.6){\color{darkpink}$\Lambda_2$}; %
            
            
\draw [fill=darkpink,darkpink,opacity=0.4] (7,-2.1)  ellipse (1.3cm and 0.7cm);


\draw[thick](-7.8,2.5)--(-2,4.1); 
\draw[thick,densely dotted](7, 4) .. controls (5, 4.5) and (3,3.2) ..
     (-2,4.1);    
\draw[thick, dashed] (7,4)--(14,4) ;
\draw[thick](14,4) -- (16,2.5); 

\draw[very thick](-7.8,-2)--(-2,-2.2);
\draw [very thick](7, -2) .. controls (5, -2.2) and (2,-1.5) ..     (-2,-2.2); 
\draw[very thick] (7,-2)--(14,-2.2) -- (16,-2.5); 
\node [anchor=south] at (16,-2.2){$\wxi$};

         \draw [line width=1mm,opacity=0.4,color=lavanda,->] (-2,6) .. controls (-3,6.3) and (-5.5,6).. (-6.5,5);
  \node  at (-4.5,5.4) {\color{lavanda}\small
  $\mathcal{H}_1$} ;

         \draw [line width=1mm,opacity=0.3,color=darkpink,->] (0,6) .. controls (3,6.3) and (5.5,6).. (6.3,5.3);
  \node  at (4.5,5.4) {\color{darkpink}\small
  $\mathcal{K}$} ;

         \draw [line width=0.2mm,color=purple,->] (-2,-3) .. controls (-3,-3.5) and (-5.5,-3.5).. (-7.5,-3);
  \node  at (-2,-3.6) {\color{purple}\small
  $\wh{S}_t$} ;

  \draw [thick,color=black,fill=black] (-2,4.1) circle (0.07cm);
    \node at (-1.8,4.7) {$\well_1$};

    \draw [thick,color=black,fill=black] (-2,-2.2) circle (0.07cm) ;
\node [anchor=south] at (-1.5,-3.3){$\wq_1$}; %

     \draw [thick,color=black,fill=black] (7,4) circle (0.07cm);
    \node at (7.5,4.6) {$\well_2$};   

     \draw [thick,color=black,fill=black] (7,-2) circle (0.07cm);
    \node at (7.5,-2.7) {$\wq_2$};

  \end{tikzpicture}
  \caption{Invertibility for $t\in[0,\wtau_3)$: $\mathcal{H}_t(\Lambda_1)$ projects diffeormorphically onto a neighbourhood of $\wxi(t)$ for every $t\in[0,\wtau_3)$.\\  
  Solid lines denote the bang arcs, dashed lines inactivated arcs, and dotted lines singular arcs.}
  \end{figure}
\end{center}

\paragraph{Invertibility for $t\in[\wtau_1,\wtau_2]$.}
In order to prove the claim, we introduce the auxiliary Hamiltonian
\begin{equation} 
 \wh H_t=H_0+\wu(t) \Phi^-, \qquad t\in[\wtau_1,\wtau_2]. 
\end{equation}
This new Hamiltonian shares some important features with $K$. In particular, $\wh H_t$ is an over-maximised Hamiltonian on $\Sigma^-$ too, and its Hamiltonian vector field is tangent to $\Sigma^-$;
we denote with $\wh{\cH}_t$
its flow from time $\wtau_1$ to time $t$. The invertibility of $\pi\wh{\cH}_t|_{\Lambda_1}$ is related to the invertibility of
$\pi{\cK}_t|_{\Lambda_1}$, as the following result shows.

The proof uses the same arguments of \cite[Lemma~9]{PS11}; we sketch it in the Appendix.
\begin{lemma} \label{lemma:  K e H uguale}
 For every $t\in[\wtau_1,\wtau_2]$, the followings hold
\begin{enumerate}
	\item $\wh{\cH}_{t*}(T_{\well_1}\Lambda_1)=\cK_{t*}(T_{\well_1}\Lambda_1)$.
	\item If  $\ker\big(\pi\wh{\cH}_t\big)_*|_{T_{\well_1}\Lambda_1}=0$,  then $\ker\big(\pi{\cK}_t\big)_*|_{T_{\well_1}\Lambda_1}=0$.
\end{enumerate} 
\end{lemma}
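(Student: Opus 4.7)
The plan is to conjugate the two flows via a time-dependent reparametrization along the characteristic field $\vPhi{-}$, and then read off both claims from the infinitesimal version of this conjugation. Two preliminary facts drive the argument. First, writing $K = \wh H_t + (\nu - \wu(t))\Phi^-$ and applying the product rule for Hamiltonian vector fields, one gets on $\Sigma^-$
\begin{equation*}
\vec{K}(\ell) = \vec{\wh H}_t(\ell) + (\nu(\ell) - \wu(t))\vPhi{-}(\ell),
\end{equation*}
since the extra term $\Phi^-\vec\nu$ vanishes there. Second, by construction of $H_0$ (Lemma~\ref{le:theta}), $L_{\vPhi{-}}H_0=0$, so $\{\wh H_t,\Phi^-\}=0$ and $[\vec{\wh H}_t,\vPhi{-}]=0$. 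Consequently $\wh\cH_t$ commutes with $\exp(s\vPhi{-})$, and in particular $\wh\cH_{t*}\vPhi{-}(\ell)=\vPhi{-}(\wh\cH_t(\ell))$.

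With these in hand, I would set
\begin{equation*}
\sigma(t,\ell):=\int_{\wtau_1}^t\bigl(\nu(\cK_s(\ell))-\wu(s)\bigr)\,\ud s
\end{equation*}
and verify the identity
\begin{equation*}
\cK_t(\ell)=\exp\bigl(\sigma(t,\ell)\vPhi{-}\bigr)\circ\wh\cH_t(\ell)
\end{equation*}
by checking that both sides solve the same Cauchy problem $\dot\gamma=\vec{K}(\gamma)$, $\gamma(\wtau_1)=\ell$, using invariance of $\Sigma^-$ under $\exp(s\vPhi{-})$ (since $\Phi^-$ is constant along its own flow) together with the commutation property and the algebraic identity above. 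Because $\nu(\wla(s))=\wu(s)$, one has $\sigma(t,\well_1)=0$, so differentiating at $\ell=\well_1$ in the direction $\delta\ell\in T_{\well_1}\Lambda_1$ yields the master relation
\begin{equation*}
\cK_{t*}\delta\ell=\wh\cH_{t*}\delta\ell+c(t,\delta\ell)\,\vPhi{-}(\wla(t)),\qquad c(t,\delta\ell):=\langle\ud_\ell\sigma(t,\well_1),\delta\ell\rangle.
\end{equation*}

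Claim~(1) is now immediate: by the remark preceding the proposition, $\vPhi{-}(\well_1)\in T_{\well_1}\Lambda_1$; Proposition~\ref{pro: K tangente} gives $\cK_{t*}\vPhi{-}(\well_1)=\vPhi{-}(\wla(t))$ and the commutation above gives $\wh\cH_{t*}\vPhi{-}(\well_1)=\vPhi{-}(\wla(t))$, so the correction $c(t,\delta\ell)\vPhi{-}(\wla(t))$ sits in both pushforwards and the two subspaces coincide. For~(2), assume $\pi_*\cK_{t*}\delta\ell=0$. Applying $\pi_*$ to the master relation and using $\pi_*\vPhi{-}=f_1\circ\pi$ (because $\psi\circ\pi$ is fiberwise constant) gives $\pi_*\wh\cH_{t*}\bigl(\delta\ell+c(t,\delta\ell)\vPhi{-}(\well_1)\bigr)=0$. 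The injectivity hypothesis forces $\delta\ell=-c(t,\delta\ell)\vPhi{-}(\well_1)$; pushing this back through $\cK_{t*}$ and projecting gives $0=-c(t,\delta\ell)f_1(\wxi(t))$, and since $F_1(\wla(t))=\psi(\wxi(t))>0$ on $[\wtau_1,\wtau_2]$ by Assumption~\ref{ass:  nonzero at switch} we have $f_1(\wxi(t))\neq 0$, so $c=0$ and $\delta\ell=0$.

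The main technical point I expect to work through carefully is the verification of the conjugation identity; once it is established, both parts of the lemma reduce to linear algebra in $T_{\wla(t)}T^*M$, hinging on the single fact that the discrepancy between the two pushforwards is one-dimensional and directed along $\vPhi{-}(\wla(t))$, a direction that is common to both $\wh\cH_{t*}T_{\well_1}\Lambda_1$ and $\cK_{t*}T_{\well_1}\Lambda_1$ and is not annihilated by $\pi_*$.
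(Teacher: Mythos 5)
Your proof is correct, and it reaches the same core observation as the paper's proof, namely that $\cK_{t*}$ and $\wh\cH_{t*}$ differ on $T_{\well_1}\Lambda_1$ only by a correction along $\vPhi{-}(\wla(t))$, a direction that lies in both image subspaces and is not killed by $\pi_*$. But the route to that observation differs in a useful way. The paper factorizes the other way round, writing $\cG_t=\wh\cH_t^{-1}\circ\cK_t$, and then uses the fact that $\cG_t$ is a Hamiltonian flow fixing $\well_1$ so that $\cG_{t*}$ is the linear Hamiltonian flow of the quadratic $G_t''=\tfrac12\uD^2 G_t|_{\well_1}$; it then computes $\vec{G}_t\se$ explicitly and exploits the tangent split $T_{\well_1}\Sigma^-=T_{\well_1}S^-\oplus\R\vPhi{-}(\well_1)$ to decompose $\dl=\dl_S+a\vPhi{-}(\well_1)$. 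You bypass the appeal to linearised Hamiltonian flows entirely by conjugating $\cK_t=\exp(\sigma\vPhi{-})\circ\wh\cH_t$ and differentiating at $\well_1$, which yields the master relation directly; you also avoid the $S^-$/$\vPhi{-}$ splitting, replacing the paper's $\mu(t)\equiv a$ argument by the clean observation that $\dl=-c\,\vPhi{-}(\well_1)$ projects under $\pi\cK_{t}$ onto $-c\,f_1(\wxi(t))\ne 0$ unless $c=0$. Overall your version is more self-contained.

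One small point to be careful about, as you anticipate: your conjugation identity cannot be verified quite as literally as ``both sides solve $\dot\gamma=\vec K(\gamma)$,'' since the time derivative of $\exp(\sigma(t,\ell)\vPhi{-})\circ\wh\cH_t(\ell)$ contains the factor $\nu(\cK_t(\ell))-\wu(t)$, not $\nu(\gamma(t))-\wu(t)$, and identifying the two is precisely what you want to prove. The clean fix is to run the check in the other direction: show that $\beta(t):=\exp(-\sigma(t,\ell)\vPhi{-})\circ\cK_t(\ell)$ satisfies $\dot\beta=\vec{\wh H}_t(\beta)$, $\beta(\wtau_1)=\ell$, using the commutation $[\vec{\wh H}_t,\vPhi{-}]=0$, the invariance of $\Sigma^-$, and $\vec K=\vec{\wh H}_t+(\nu-\wu(t))\vPhi{-}$ on $\Sigma^-$, from which $\beta=\wh\cH_t(\ell)$. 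Everything else in your argument then goes through unchanged.
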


Taking advantage of Lemma~\ref{lemma:  K e H uguale}, 
the invertibility of $\pi\wh{\cH}_t|_{\Lambda_1}$ implies the one of
$\pi\wh{\cK}_t|_{\Lambda_1}$. On the other hand, it turns out that $\wh{\cH}_t$ is directly linked to the Hamiltonian flow associated with the second variation (see details here below); therefore, it is much easier to prove the invertibility of $\pi\wh{\cH}_t|_{\Lambda_1}$ as a consequence of the coerciveness of the second variation. Indeed, consider the subspace
$\bcV\subset\bcW$ defined by
\begin{equation}
\bcV  := \big\{
	\de \in \bcW  \colon  \ep = 0
	\big\}. \end{equation}	
\begin{lemma} \label{lemma:  H inv}
Assume  that $\bJ|_{\bcV}$ is coercive. Then $\ker\big(\pi\wh{\cH}_t\big)_*|_{T_{\well_1}\Lambda_1}=0$ for any $t\in[\wtau_1,\wtau_2]$.
\end{lemma}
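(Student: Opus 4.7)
The plan is to identify the linearisation of $\wh{\cH}_t$ along the singular arc with the characteristic Hamiltonian system of the accessory LQ problem associated with $\bJ|_{\bcV}$, and then to invoke coerciveness to rule out non-trivial kernel elements.

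First, I would check that $\wh{\cH}_t(\well_1) = \wla(t)$ on $[\wtau_1,\wtau_2]$: by Proposition~\ref{pro:  H0}~(1), $\vec{H}_0 = \vec{F}_0$ on $S^-$, and PMP along the singular arc (together with $\psi > 0$ from Assumption~\ref{ass:  segni}) gives $\dot{\wla}(t) = \vec{F}_0(\wla(t)) + \wh{u}(t)\vPhi-(\wla(t)) = \vec{\wh H}_t(\wla(t))$. Thus $\wh{\cH}_t(\Lambda_1)$ is a Lagrangian submanifold through $\wla(t)$ at each $t$, and since $\pi|_{\Lambda_1}$ is a diffeomorphism onto $U_{\wxu}$, every element of $T_{\well_1}\Lambda_1$ is of the form $v_0 = (\ud(\alpha+\theta))_{*,\wxu}\dz$ for a unique $\dz \in T_{\wxu}M$.

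I would then argue by contradiction. Assume there exist $t^* \in [\wtau_1,\wtau_2]$ and a non-zero $\dz \in T_{\wxu}M$ with $\pi_*\wh{\cH}_{t^*,*}v_0 = 0$, where $v_0$ is defined as above. Set
\begin{equation}
\zeta(t) := \wh S_{t*}^{-1}\pi_*\wh{\cH}_{t,*}v_0,\qquad t \in [\wtau_1, t^*].
\end{equation}
A calculation using the explicit form $\wh H_t = H_0 + \wh u(t)\Phi^-$, the identity that $\vec H_0 - \vec F_0$ is vertical on $S^-$ (Proposition~\ref{pro:  H0}), and the first-order variation of the flow $\wh S_t$ shows that $\dot{\zeta}(t) = w(t)\dot{g}^1_t(\wxu)$ for a function $w \in L^2([\wtau_1, t^*])$, which can be interpreted as the linearisation of the feedback $\nu$ defined in~\eqref{feedback}. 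By construction $\zeta(\wtau_1) = \dz$ and $\zeta(t^*) = 0$. Extending $w \equiv 0$ on $[t^*,\wtau_2]$ keeps $\zeta \equiv 0$ there, so $\de := (\dz, 0, w) \in \bcV$ is a non-trivial admissible variation.

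Finally, I would evaluate $\bJ[\de]^2$ via a symplectic / integration-by-parts argument. The Hessian term $\tfrac12 \uD^2(\alpha+\theta+\wh\gamma_T+\wh S_T^0)(\wxu)[\dz]^2$ in~\eqref{eq: Jssecondo} encodes the transversality condition attached to $\Lambda_1$ thanks to $\ud\alpha(\wxu) = \well_1 = -\ud(\wh\gamma_T + \wh S_T^0)(\wxu)$. Using that $\wh{\cH}_t$ is a symplectomorphism and that $\wh{\cH}_{t,*}v_0$ stays tangent to a Lagrangian, the standard integration by parts collapses $\bJ[\de]^2$ to a boundary contribution at $t^*$ expressible as a symplectic pairing involving $\wh{\cH}_{t^*,*}v_0$. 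Since $\pi_*\wh{\cH}_{t^*,*}v_0 = 0$ forces $\wh{\cH}_{t^*,*}v_0$ to be vertical, and vertical vectors span a Lagrangian subspace, this pairing vanishes. Hence $\bJ[\de]^2 \leq 0$, contradicting the coerciveness of $\bJ|_{\bcV}$.

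The hard part is the explicit identification between the linearised Hamiltonian flow of $\wh H_t$ and the characteristic system of the accessory LQ problem: one must verify that the extracted $w$ genuinely belongs to $L^2$ and satisfies the correct Euler--Lagrange structure, and must execute the symplectic integration by parts in~\eqref{eq: Jssecondo} carefully given the heterogeneous terms involving the pull-back fields $g^1_s$, the functions $\wpsi{s}$, and the auxiliary Hamiltonian $\wh S_t^0$.
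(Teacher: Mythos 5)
Your proposal follows the same underlying strategy as the paper's: identify the linearised over-maximised flow $\wh{\cH}_{t*}$ (pulled back to $T_{\wxu}M\times T^*_{\wxu}M$ via $\wh\cF_{t*}^{-1}$ and the anti-symplectic isomorphism $\iota$) with the characteristic Hamiltonian flow $\cH_t''$ of the accessory LQ problem, and then use coerciveness of $\bJ|_{\bcV}$ to rule out a nontrivial kernel. The one genuine difference in route is that the paper closes the argument by invoking \cite[Theorem~2.6]{SZ97}, which asserts the equivalence between coercivity of the LQ functional and injectivity of $\pi\cH_t''$ on the transversality space $L_{\wtau_1}''$; you instead prove the needed implication directly by a contradiction argument with integration by parts, extending the extremal $w$ by zero on $[t^*,\wtau_2]$ to build a nontrivial $\de\in\bcV$ with $\bJ[\de]^2=0$. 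Both are valid; the paper's route is shorter precisely because it cites a packaged Jacobi-type criterion, while yours is self-contained but requires executing the boundary-term computation.

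Two small points worth tightening: first, the conclusion of your integration by parts should read $\bJ[\de]^2=0$ (not merely $\leq 0$): the boundary term at $t^*$ vanishes because $\zeta(t^*)=0$, and the boundary term at $\wtau_1$ cancels the Hessian term by the transversality identification $\ud\alpha(\wxu)=\well_1=-\ud(\wh\gamma_T+\wS{T}^0)(\wxu)$; there is no need to invoke Lagrangianity of the vertical fibre. Second, the genuinely hard step — that $\zeta(t):=\wh S_{t*}^{-1}\pi_*\wh{\cH}_{t*}v_0$ satisfies $\dot\zeta=w\,\dot g_t^1(\wxu)$ with $w$ the LQ feedback — is exactly equation~\eqref{eq: hami hami}, and you are right that this is where the work lives; the paper itself relegates that verification to computations analogous to those in \cite{CS10,PS11}, so on this point your sketch and the paper's are on equal footing.
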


\begin{proof}
We consider the LQ optimal control problem on $T_{\wxu}M$ given by
\begin{equation}\label{eq:LQ}
\min_{\de\in \bcV}  \bJ[\de]^2.
\end{equation}
The maximised Hamiltonian associated by PMP with this LQ problem is
\begin{equation}
H_t''(\delta p,\delta z)= \frac{1}{2 R(t)} \Big(\langle \delta p,\dot{g}_t^1(\wxu)\rangle-L_{\delta z} A(t,\wxu)  \Big)^2 \qquad (\delta p,\delta z)\in T_{\wxu}^*M\times T_{\wxu}M.
\end{equation}
Since $\de \in \bcV$, $\delta z$ is free so that PMP  applied to problem \eqref{eq:LQ} gives the following  transversality conditions at the initial point
\begin{equation}
(\delta p,\delta z)\in L_{\wtau_1}'':=\{(\delta p,\delta z) \colon  \delta z\in T_{\wxu}M, \delta p=-\uD^2\big(\theta + \alpha + \wh\gamma_T + \wS{T}^0 \big)[\delta z,\cdot]\}.
\end{equation}
Denote with $\cH_t''$  the Hamiltonian flow of $H_t''$.
In order to compare $\cH_t\se$  with $\widehat{\cH}_t$, we define
the anti-symplectic isomorphism
$\iota  \colon  T_{\wxu}^*M\times T_{\wxu}M \to T_{\well_1}(T^*M)$ as
\begin{equation}\label{eq: anti sym iso}
\iota(\delta p,\delta x)=-\delta p+\ud\big( - \wh{S}_T^0-\wh{\gamma}_T \big)_* \delta x. 
\end{equation}
By definition, $\boldsymbol{\sigma}\circ \iota\otimes \iota=-\wh{\sigma}$, where $\wh{\sigma}$ denotes the standard symplectic structure on $T_{\wxu}^*M\times T_{\wxu}M$.
It is immediate to verify that 
\begin{equation}
\iota L_{\wtau_1}'' 
=T_{\well_1}\Lambda_1.
\end{equation}
Moreover, by analogous computations to those in \cite{CS10,PS11}, it is easy to prove that
\begin{equation} \label{eq: hami hami}
\cH_{t}\se=\iota^{-1} \widehat{\cF}_{t *}^{-1} \wh{\cH}_{t *}\iota \quad t\in[\wtau_1,\wtau_2], 
\end{equation}
which implies that
\begin{equation}\label{eq:isoinversa}
\left(\pi\cH_t\se\right)^{-1} = 
\iota^{-1}\big(\pi\wh\cH_t\big)^{-1}_* \wS{t*}.
\end{equation}

On the other hand, \cite[Theorem~2.6]{SZ97} states that $\bJ|_{\bcV}$ is coercive if and only if $\pi \cH_t''  \colon  L_{\wtau_1}''\to T_{\wxu}M$ is one to one for 
every $t\in [\wtau_1,\wtau_2]$,
so that the coerciveness of $\bJ|_{\bcV}$ implies that $\big(\pi \wh{\cH}_{t}\big)_*|_{T_{\well_1}\Lambda_1}$ is invertible for every $t\in[\wtau_1,\wtau_2]$.
\end{proof}

Coupling Lemma~\ref{lemma:  K e H uguale} with Lemma~\ref{lemma:  H inv}, we obtain that, if $\bJ|_{\bcV}$ is coercive, then 
$(\pi\cK_t)|_{\Lambda_1}$ is invertible for $t\in[\wtau_1,\wtau_2]$.

\begin{remark} \label{Lambda 2}
Set $\Lambda_2:=\cK_{\wtau_2}(\Lambda_1)$.
Since $\Lambda_2$ is a Lagrangian submanifold of 
$T^*M$ which projects one to one onto a neighbourhood of $M$, there exist a neighbourhood $U_{\wq_2}$ of $\wq_2$ and a smooth function $\alpha_2  \colon  U_{\wq_2} \to \R$ such that
\begin{equation}
\ud\alpha_2(\wx_2)=\well_2, \qquad
\Lambda_2:={\cK}_{\wtau_2} (\Lambda_1)=\{\ud\alpha_2(q) \colon  q \in U_{\wq_2}\}.
\end{equation}
\end{remark}

\smallskip

\begin{center}
\begin{figure}[h!]
\begin{tikzpicture}[scale=0.5]
\pgftransformscale{0.9} %

%
\draw [-,thick] (-1.1,-0.7)--(18.2,-0.7); %
     \draw [-,thick] (18.2,-0.7)--(16.5,-4);
     \draw [-,thick] (-3.1,-4)--(16.5,-4);
    \node[anchor=north] at (-3,-4){$M$};

          \shade[left color=turq!10,right color=turq!70] 
    (-1.1,-0.7) to[out=0,in=0] (18.2,-0.7) -- (16.5,-4) to[out=0,in=0]
    (-3.1,-4) -- cycle;


\draw[->,color=dblue,very thick] (12,2.2) -- (12,-0.5);
\node  at (13,0) {\color{dblue}$\pi_*$} ;


\draw[fill=orange,princ,opacity=0.2]  (3,4)--(6,7.5)--(10,5)--(7.5,0.5)--(3,4) -- cycle;

\draw[princ,thick] (4.4,5.7)--(7,4)--(8.8,2.8);
\node at (5,1){\tiny\color{princ}$T_{\well_2}\Lambda_2$};
\node at (6,6){\tiny\color{princ}$L_+$};
\node at (7,2){\tiny\color{princ}$L_-$};

\draw[fill=lavanda,lavanda,opacity=0.3]  (12,6)--(16,2) -- (19,4)--(15,8)--(12,6) -- cycle;

\draw[fill=dor,dor,opacity=0.3]  (12,6)--(16,2) -- (15,-0.5)--(11,3.5)--(12,6) -- cycle;

            
 \draw[very thin,color=gray] (11.2,-0.7)--(16.7,-3.65);           
%

\draw[fill=magenta,magenta,opacity=0.2]  (11.2,-0.7)--(16.7,-3.65) -- (18.2,-0.7)--(12.9,-0.7) -- cycle;
\draw[fill=dor,dor,opacity=0.3]  (11.2,-0.7)--(16.7,-3.65) -- (16.5,-4)--(8.9,-4)--(10.9,-0.7) -- cycle;

\draw[thick,densely dotted](7, 4) .. controls (5, 4.5) and (3,3.2) ..
     (-2,4.1);    
\draw[thick,dashed] (7,4)--(14,4);
\draw[thick] (14,4)-- (16,2.5); 

\draw [thick](7, -2) .. controls (5, -2.2) and (2,-1.5) ..     (-2,-2.2); 
\draw[thick] (7,-2)--(14,-2.2) -- (16,-2.5);

  \draw [thick,color=black,fill=black] (-2,4.1) circle (0.07cm);
    \node at (-1.8,4.7) {$\well_1$};

    \draw [thick,color=black,fill=black] (-2,-2.2) circle (0.07cm) ;
\node [anchor=south] at (-1.5,-3.3){$\wq_1$}; %

     \draw [thick,color=black,fill=black] (7,4) circle (0.07cm);
    \node at (7.5,4.6) {$\well_2$};   

     \draw [thick,color=black,fill=black] (7,-2) circle (0.07cm);
    \node at (7.5,-2.7) {$\wq_2$};

    \draw [thick,color=black,fill=black] (14,4) circle (0.07cm) ;
\node [anchor=south] at (15.3,4){\small$\wla(\wtau_3)$}; %

    \draw [thick,color=black,fill=black] (14,-2.2) circle (0.07cm) ;
\node [anchor=south] at (14,-4){\small$\wxi(\wtau_3)$}; %

  \end{tikzpicture}
  \caption{Invertibility at $t=\wtau_3$.
$L_+$ (respectively, $L_-$) denotes the half space of all $\delta \ell_2\in T_{\well_2}\Lambda_2$ such that $\langle \ud \wtau_3(\well_2),\delta \ell_2\rangle \geq 0$ (respectively, $\leq 0$).\\
In purple, $\mathcal{H}_{\wtau_3 *}L_+$; in yellow,  $\mathcal{H}_{\wtau_3 *}L_-$. These semi-planes project without intersections on $T_{\wxi(\wtau_3)}M$.
  }
  \end{figure}
\end{center}

\smallskip

\paragraph{Invertibility for $t\in[\wtau_2,T]$.}
Thanks to equation~\eqref{eq: flusso sotto 3}, we obtain that,  if $\bJ|_{\bcV}$ is coercive, then $\pi\cH_t|_{\Lambda_1}$ is locally  invertible for every $t< \wtau_3$.

\smallskip
Let $\dl\in T_{\well_1}\Lambda_1$, and set $\dl_2=\cK_{\wtau_2*}\dl$.
The first order approximation of $\pi\cH_{\wtau_3}$ at $\well_1$, applied to $\dl$, is given by
\begin{equation}
\begin{cases}
\exp\big((\wtau_3-\wtau_2) f_0 \big )_* \pi_* \dl_2 & \mbox{ if } 
\scal{\ud\tau_3(\well_2)}{\dl_2} \geq 0,\\
\scal{\ud\tau_3(\well_2)}{\dl_2} f_1(\wx_3)+ \exp\big((\wtau_3-\wtau_2) f_0 \big )_* \pi_* \dl_2 & \mbox{ if } 
\scal{\ud\tau_3(\well_2)}{\dl_2} \leq 0,
\end{cases}
\end{equation}
which, up to a pullback, can be written as 
\begin{equation}
\begin{cases}
\pi_* \dl_2 & \mbox{ if } \scal{\ud\tau_3(\well_2)}{\dl_2} \geq 0,\\
\pi_* \dl_2 -  \scal{\ud\tau_3(\well_2)}{\dl_2}  \wt k(\wx_2)
& \mbox{ if } 
\scal{\ud\tau_3(\well_2)}{\dl_2} \leq 0,
\end{cases}
\end{equation}
where $ \wt k := \wh{S}_{\wtau_2*}k$ is the pullback of $-f_1$ from  time $\wtau_3$ to time $\wtau_2$. Notice that, by Assumption~\ref{ass:  nonzero at switch}, $f_1(\wq_3) \neq 0$, so that $\wt k(\wq_2) \neq 0$.

By Clarke's inverse function theorem \cite{Cla76}, $\pi\cH_{\wtau_3}$ is invertible if for every $a\in[0,1]$ and 
for every $\dl_2 \in T_{\well_2}\Lambda_2$ 
it holds
\begin{equation} \label{eq: clarke contrad}
\pi_* \dl_2 - a  \scal{\ud\tau_3(\well_2)}{\dl_2} \wt k(\wx_2) \neq 0. 
\end{equation}
By contradiction, assume there exist some $a\in[0,1]$ and $\dl_2 \in 
T_{\well_2}\Lambda_2$, $\dl_2\neq 0$, such that the left hand side of \eqref{eq: clarke contrad} is zero. 
This implies  that $\pi_*\dl_2=\rho \wt k(\wx_2)$, for some $\rho\neq 0$, so that 
\begin{equation}
\rho  \wt k(\wx_2) -a  \scal{\ud\tau_3(\well_2)}{\dl_2}  \wt k(\wx_2)=0,
\end{equation}
which yields 
\begin{equation}
1-a \scal{\ud\tau_3(\well_2)}{\ud\alpha_{2*} \wt k(\wx_2)} = 0,
\end{equation}
since $\dl_2=\rho \ud\alpha_{2*} \wt k(\wx_2)$. Equivalently, using \eqref{eq:dtau3}, 
\begin{equation} \label{eq: clarke}
\Rtre+ a \sigma_{\well_3} \big(\exp((\wtau_3-\wtau_2)\vec{F}_0)_* 
\ud\alpha_{2*} \wt k(\wx_2),\vPhi+\big)=0.
\end{equation}
By Assumption~\ref{ass:  reg switch}, the left hand side of \eqref{eq: clarke} is negative for $a=0$. If we show that it is negative also for $a=1$, then, by linearity, we get a contradiction and we are done. The last part of this section is devoted to prove that this is a consequence of the coerciveness of the second variation
on $\bcW$.

\smallskip
We denote with the symbol $\boldsymbol{J^b}$ the bilinear form associated with $\bJ$.
We recall that $\bJ$ is coercive on $\bcW$ if and only if it is coercive both on $\bcV$ and on $\bcW\cap \bcV^{\bot}$,
where $ \bcV^{\bot}$ denotes the orthogonal complement to $ \bcV$ with respect to 
$\boldsymbol{J^b}$.
In order to compute $\bcV^{\bot}$, we introduce, for every $\de=(\delta z,\ep,w) \in \bcW$, the trajectory $p \colon [\wtau_1,\wtau_2] \to T_{\wxu}^*M$  solution of
the Cauchy problem
\begin{equation}
 \begin{cases}
 \dot{p}(t)=-w(t)L_{\cdot} A(t,\wxu), \\
 p(\wtau_1)=-\uD^2(\alpha+\theta+\wh\gamma_T+\wh S_T^0)(\wxu) [\delta z,\cdot].
 \end{cases}
\end{equation}
Let $\wt\de=(\wt{\delta z},\wt\ep,\wt w)\in \bcW$ be another admissible variation; then $\boldsymbol{J^b}[\de,\wt\de]$ can be written as 
\begin{equation} \label{eq:bilinear}
\begin{split} 
\boldsymbol{J^b}[\de,\wt\de] & =  \dfrac{1}{2}\uD^2\big(\theta + \alpha + \wh\gamma_T + \wS{T}^0 \big) (\wxu)[\dz,\wt\dz]\\& + 
\dfrac{1}{2}\int_{\wtau_1}^{\wtau_2} \big( w^2(t)\wt w(t) R(t) + w(t) L_{\wt\zeta(t)} A(t,\wxu) + \wt w(t) L_{\zeta(t)} A(t,\wxu)\big) \ud t \\
&+\dfrac{\ep \wt\ep}{2} \Big(L_{[k_4,k_3]} \big(\wh\gamma_T + \wS{T}^0 - \wS{\wtau_2}^0\big) -L_{k_4} \wpsi{\wtau_3} (\wxu)   -\liederdo{k}{ \big(\wh\gamma_T + \wS{T}^0 - \wS{\wtau_2}^0\big)}  
\Big) \\ 
& =  \dfrac{1}{2}\langle \uD^2\big(\theta + \alpha + \wh\gamma_T + \wS{T}^0 \big) (\wxu)[\delta z,\cdot] + 
p(\wtau_1) , \wt{\delta z} \rangle-\dfrac{1}{2} \langle p(\wtau_2),\wt\zeta(\wtau_2)\rangle\\
&+\dfrac{1}{2}\int_{\wtau_1}^{\wtau_2} \wt w(t) \Big( w(t) R(t) + L_{\zeta(t)} A(t,\wxu) + \langle p(t),\dot{g}_t^1 \rangle \Big) \ud t \\
&+\dfrac{\ep \wt\ep}{2} \Big(L_{[k_4,k_3]} \big(\wh\gamma_T + \wS{T}^0 - \wS{\wtau_2}^0\big) -L_{k_4} \wpsi{\wtau_3} (\wxu)   -\liederdo{k}{ \big(\wh\gamma_T + \wS{T}^0 - \wS{\wtau_2}^0\big)}  
\Big) .
\end{split}
\end{equation}
From the expression here above, we see that $\delta e\in \bcV^{\bot}$ if and only if 
\begin{equation} \label{eq: ctr feed lin}
 w(t) R(t) + L_{\zeta(t)} A(t,\wxu) + \langle p(t),\dot{g}_t^1 \rangle =0 \qquad \forall t\in[\wtau_1,\wtau_2],
\end{equation}
so that, for any $\delta e\in \bcV^{\bot}$, we get that
\begin{equation}
\bJ[\de]^2  =  -\dfrac{1}{2} \langle p(\wtau_2),\zeta(\wtau_2)\rangle
+\dfrac{\ep^2}{2} \Big(L_{[k_4,k_3]} \big(\wh\gamma_T + \wS{T}^0 - \wS{\wtau_2}^0\big) -L_{k_4} \wpsi{\wtau_3} (\wxu)   -\liederdo{k}{ \big(\wh\gamma_T + \wS{T}^0 - \wS{\wtau_2}^0\big)}  
\Big) .
\end{equation}

\begin{remark} \label{rem: Hse feedback}
We stress that the solution $(p(t),\zeta(t))$ of the Cauchy problem
\begin{equation}
\begin{cases}
\dot{\zeta}(t)=w(t)\dot{g}_t^1(\wxu),\\
\dot{p}(t)=-w(t)L_{(\cdot)}A(t,\wxu),\\
(p(\wtau_1),\zeta(\wtau_1))\in L_{\wtau_1}\se,
\end{cases}
\end{equation}
associated with the control
$w(\cdot)$ that satisfies equation~\eqref{eq: ctr feed lin}
is also the solution of the Hamiltonian system associated with $H_t\se$ with the same initial condition, that is
\begin{equation}
(p(t),\zeta(t))=\cH_t\se (p(\wtau_1),\zeta(\wtau_1)).
\end{equation}
\end{remark}

Remark that $\bcV^{\bot}$ is a 1-dimensional linear space: indeed, for every $\ep$, $\zeta(\tau_2)=-\ep k(\wxu)$, so that 
$\delta z$ is uniquely determined as the backward solution of the linear system with control \eqref{eq: ctr feed lin}.
By homogeneity, we can choose $\delta e\in \bcV^{\bot}$ with $\ep=-1$. Then $\bJ|_{\bcV^{\bot}}$ is coercive if and only if
\begin{align} \label{eq: zero maggiore}
0&> \scal{p(\wtau_2)}{\zeta(\wtau_2)}
-L_{[k_4,k_3]} \big(\wh\gamma_T + \wS{T}^0 - \wS{\wtau_2}^0\big)(\wxu) +L_{k_4} \wpsi{\wtau_3}(\wxu)   +
\liederdo{k}{ \big(\wh\gamma_T + \wS{T}^0 - \wS{\wtau_2}^0\big)}    \nonumber \\
&=\scal{p(\wtau_2)}{k(\wxu)} + \Rtre +L_k \abs{\wpsi{\wtau_3}}(\wxu) +L_k^2 
(\wh \gamma_T+\wh S^0_T -\wh S^0_{\wtau_2} )(\wxu), 
\end{align}
where the expression~\eqref{eq: zero maggiore} is obtained applying \eqref{eq: solution lambda} and the definition of $\wh\gamma_T$.

We now compute $\scal{p(\wtau_2)}{k(\wxu)}$ in terms of Hamiltonian flows. 
Consider the pair $(\delta p,\delta z)\in L_{\wtau_1}\se$
such that $\pi\cH_{\wtau_2}\se(\delta p,\delta z)=k(\wxu)$
(thanks to the invertibility of $\pi\cH\se_{\wtau_2}$, it exists and it is unique), so that, by \eqref{eq:isoinversa}, we get 
$(\delta p,\delta z)= \iota^{-1} (\pi \wh{\cH}_{\wtau_2})_*^{-1}  \widehat{S}_{\wtau_2 *} k(\wxu)$. 
Thanks to \eqref{eq: hami hami} we obtain\begin{align} 
\scal{p(\wtau_2)}{k(\wxu)} & = \wh\sigma\big(\cH_{\wtau_2}\se (\pi\cH_{\wtau_2}\se)^{-1}k(\wxu),(0,k(\wxu)) \big)\\
&= - \boldsymbol{\sigma}_{\well_1} \Big(
\widehat{\cF}_{\wtau_2*}^{-1} \widehat{\cH}_{\wtau_2*} \iota (\pi\cH_{\wtau_2}\se)^{-1}k(\wxu), 
 \ud \big(-\wh \gamma_T-\wh S_T^0\big)_* k(\wxu)
 \Big)\\
&= - \boldsymbol{\sigma}_{\well_1} \Big( \widehat{\cF}_{\wtau_2*}^{-1} 
\ud\alpha_{2*} 
\wt k(\wx_2) , 
\ud \big(-\wh \gamma_T-\wh S_T^0\big)_* k(\wxu)
\Big),\label{eq:ptau2k}
\end{align}
where  $\alpha_2$ is the function defined in Remark~\ref{Lambda 2}.
Indeed, thanks to \eqref{eq:isoinversa}, we obtain that $\iota (\pi\cH_{\wtau_2}\se)^{-1}k(\wxu)=
(\pi \wh{\cH}_{\wtau_2})^{-1}_* \wt k(\wx_2)$, so that  $\iota (\pi\cH_{\wtau_2}\se)^{-1}k(\wxu)\in T_{\well_1}\Lambda_1$ and
\begin{equation}
\wh{\cH}_{\wtau_2*}\iota (\pi\cH_{\wtau_2}\se)^{-1}k(\wxu)\in \wh{\cH}_{\wtau_2*}(T_{\well_1}\Lambda_1)=
\cK_{\wtau_2*}(T_{\well_1}\Lambda_1)=T_{\well_2}\Lambda_2.
\end{equation}
By cumbersome but standard computations,  
$\widehat{\cF}_{\wtau_2*} \ud \big(-\wh \gamma_T-\wh S_T^0\big)_* k(\wxu)
= \ud \big((-\wh \gamma_T-\wh S_T^0+\wh S_{\wtau_2}^0)\circ \wh S_{\tau_2}^{-1}\big)_* 
\wt k(\wx_2)$. 
%
Equation \eqref{eq:ptau2k} thus  gives
\begin{align*}
\scal{p(\wtau_2)}{k(\wxu)} & =
- \boldsymbol{\sigma}_{\well_2} \big( \ud\alpha_{2*}\wt k(\wx_2), 
\ud \big((-\wh \gamma_T-\wh S_T^0+\wh S_{\wtau_2}^0)\circ \wh S_{\tau_2}^{-1}\big)_* 
\wt k(\wx_2)   \big)\\
&= -\uD^2 \big(\alpha_2+(\wh \gamma_T+\wh S^0_T -\wh S^0_{\wtau_2} ) \circ \wh S_{\wtau_2}^{-1}\big)[\wt k(\wx_2)]^2.
\end{align*}
Finally,  computing the value of $\well_2$ by means of \eqref{eq: solution lambda}, we get
\begin{align}
\langle p(\wtau_2),& \, k(\wxu)\rangle  + L^2_k(\wh \gamma_T
+\wh S^0_T -\wh S^0_{\wtau_2} )(\wxu) \\
&= -\uD^2 \big(\alpha_2+(\wh \gamma_T+\wh S^0_T -\wh S^0_{\wtau_2} )
\circ \wh S_{\wtau_2}^{-1}\big)[\wt k(\wx_2)]^2 + L_{\wt k}^2
\big( (\wh \gamma_T+\wh S^0_T -\wh S^0_{\wtau_2} )\circ \wh S_{\wtau_2}^{-1})(\wx_2)\\
&= -\uD^2 \alpha_2 [\wt k(\wx_2)]^2 - \well_2 \uD\wt k (\wx_2)\wt k(\wx_2) 
= -L^2_{\wt k}\alpha_2(\wx_2).
\end{align}
Plugging this equality in equation~\eqref{eq: zero maggiore}, we obtain that
\begin{equation} \label{eq: coerc}
 \Rtre +L_k \abs{\wpsi{\wtau_3}}(\wxu) - L^2_{\wt k}\alpha_2(\wx_2) <0.
\end{equation}
It now suffices to notice that 
\begin{align}
\sigma_{\well_3} \big(\exp((\wtau_3-\wtau_2)\vec{F}_0)_*  \ud\alpha_{2*}\wt k(\wx_2),\vPhi+\big)&=
\sigma_{\well_2} \Big(d\alpha_{2*}\wt k(\wx_2),\exp(-(\wtau_3-\wtau_2)\vec{F}_0)_* \vPhi+)\Big)\\
&= -L_{\wt k}^2 \alpha_2 (\wx_2) +L_k \wpsi{\wtau_3}(\wxu),
\end{align}
to obtain that 
equation~\eqref{eq: clarke} holds true for $a=1$, and we are done.

\section{Main results} \label{sec:  main results}

We now state the main result of the paper. This section is devoted to its proof. 
\begin{theorem} \label{th: main result}
Let $\wxi$ be an admissible trajectory for system \eqref{eq: contr sys} and satisfying Assumptions~\ref{ass:  nonzero at switch}--\ref{ass:  coerc}. 
	Then $\wxi$ is a strict strong local minimiser of problem \eqref{eq: cost}-\eqref{eq: contr sys}. 
\end{theorem}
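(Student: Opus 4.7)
The strategy is the classical Hamiltonian methods scheme, made effective here by the two preceding sections: the over-maximised flow $\cH_t$ of Section~\ref{sec:overmaxi} and the local invertibility of $\pi\cH_t|_{\Lambda_1}$ established in Section~\ref{sec:invertibility}. Since $\ud\alpha(\wxu)=\well_1$ and $\ud\theta(\wxu)=0$, the point $\well_1$ lies on $\Lambda_1$. The first step is to fix a sufficiently small tubular neighbourhood $\eulO$ of the graph of $\wxi$ in $\R\times M$ such that, for every $t\in[0,T]$ and every $q$ in the $t$-slice of $\eulO$, the inverse image $\big(\pi\cH_t|_{\Lambda_1}\big)^{-1}(q)$ is a well-defined element of a fixed neighbourhood of $\well_1$ in $\Lambda_1$. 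Given any admissible trajectory $\xi$ with graph in $\eulO$ and the same endpoints as $\wxi$, define the lift $\lambda(t):=\cH_t\big((\pi\cH_t|_{\Lambda_1})^{-1}(\xi(t))\big)$. Injectivity of $\pi\cH_0$ and $\pi\cH_T$ on $\Lambda_1$, coupled with $\pi\wla(0)=q_0=\pi\lambda(0)$ and $\pi\wla(T)=q_T=\pi\lambda(T)$, forces $\lambda(0)=\wla(0)$ and $\lambda(T)=\wla(T)$.

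Using the identity $F_0(\ell)+uF_1(\ell)=h(\ell,u)+|u\,\psi(\pi\ell)|$ obtained from the normal pre-Hamiltonian~\eqref{eq: preHam}, and that along the lift $\varsigma_{\lambda(t)}(\dot\lambda(t))=\scal{\lambda(t)}{\dot\xi(t)}=F_0(\lambda(t))+u(t)F_1(\lambda(t))$, one gets
\begin{equation}
\varsigma_{\lambda(t)}(\dot\lambda(t))-H_t(\lambda(t))=h(\lambda(t),u(t))+|u(t)\psi(\xi(t))|-H_t(\lambda(t))\leq |u(t)\psi(\xi(t))|,
\end{equation}
the inequality exploiting $H_t\geq H_{\max}\geq h(\cdot,u)$, which is the very reason why $H_t$ was called over-maximised (see Proposition~\ref{pro: H0} and the construction of $\cH$). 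Integration gives $\int_0^T(\varsigma-H_t\,\ud t)[\dot\lambda]\leq J(u)$, while the same computation applied to the reference lift, along which $H_t=H_{\max}=h(\wla(t),\wu(t))$, is an equality: $\int_0^T(\varsigma-H_t\,\ud t)[\dot\wla]=J(\wu)$.

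The two integrals coincide. Indeed, on the submanifold $\tilde\Lambda:=\{(t,\cH_t(\ell))\,\colon\, t\in[0,T],\ \ell\in\Lambda_1\text{ near }\well_1\}\subset\R\times T^*M$, the Poincaré-Cartan $1$-form $\varsigma-H_t\,\ud t$ is closed, a consequence of the Lagrangian character of $\Lambda_1$ and the symplectic nature of each Hamiltonian flow composing $\cH_t$. Since the two lifts share endpoints in $\tilde\Lambda$, Stokes' theorem applied to a homotopy between them inside $\tilde\Lambda$ yields equality of the two line integrals, and therefore $J(u)\geq J(\wu)$. For the strict inequality, assuming $J(u)=J(\wu)$ forces the pointwise bound of the previous paragraph to be an equality a.e., hence $H_t(\lambda(t))=H_{\max}(\lambda(t))$ and $h(\lambda(t),u(t))=H_{\max}(\lambda(t))$ a.e. On the regular bang arcs and on the zero arc, the maximiser of $h(\lambda(t),\cdot)$ is unique by \eqref{eq: bang 1 reg}--\eqref{eq: bang 4 reg} and Assumption~\ref{ass:  reg switch}, forcing $u=\wu$ and hence $\xi=\wxi$ there. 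On $[\wtau_1,\wtau_2]$, Proposition~\ref{pro: H0}(2) forces $\lambda(t)\in S^-$, and then the singular feedback~\eqref{feedback} uniquely determines $u(t)=\wu(t)$; the coerciveness of $\bJ$ on $\bcW$ (Assumption~\ref{ass:  coerc}) enters here to exclude admissible second-order variations that could sustain the equality.

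The chief technical obstacle will be the rigorous justification of the closedness of $\varsigma-H_t\,\ud t$ on $\tilde\Lambda$ across the switching surfaces $\{\tau_i(\ell)=t\}$, and especially at $t=\wtau_3$, where $\cH_t$ is merely Lipschitz rather than $C^1$. The $C^1$-matching established in Propositions~\ref{pro: primo arco} and~\ref{pro: C1 at tau2} covers the first two transitions; at $\wtau_3$ one must invoke a non-smooth version of Stokes' theorem, in the spirit of Clarke's framework already used in Section~\ref{sec:invertibility}, with the strict inequality~\eqref{eq: reg switch 3} providing the convexity needed for the regularisation. This non-smooth closedness argument, together with the careful verification that the lift remains in a common chart of $\tilde\Lambda$ through all the switchings, is the technical core of the proof.
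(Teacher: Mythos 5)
Your overall scheme --- lifting through the over-maximised flow, comparing costs via closedness of the Poincar\'e--Cartan form, and then exploiting uniqueness of the maximiser for strictness --- is the same as the paper's, and the inequality half of the argument is essentially correct: the paper works on $[0,T]\times\Lambda_1$ with the pulled-back one-form $\omega=\cH_t^*\varsigma-H_t\circ\cH_t\,\ud t$ and invokes \cite[Lemma~3.3]{SZ16} for exactness, which is what your Stokes formulation on $\tilde\Lambda$ amounts to, and that lemma is precisely where the Lipschitz behaviour at $\wtau_3$ is handled, so your worry is correctly placed.

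The strictness argument on the singular interval, however, has a genuine gap. Your deduction $\lambda(t)\in S^-$ from the pointwise equality and Proposition~\ref{pro: H0}(2) is fine, but the next assertion --- that the singular feedback~\eqref{feedback} then ``uniquely determines $u(t)=\wu(t)$'' --- does not follow. The feedback gives $u(t)=\nu(\lambda(t))$ only if $\lambda$ is a Pontryagin extremal, i.e.\ if $\dot\lambda=\vec h(\lambda,u)$; but here $\lambda(t)=\cK_t(\ell(t))$ with $\ell(t)=(\pi\cH_t|_{\Lambda_1})^{-1}(\xi(t))$, so $\dot\lambda(t)=\vec K(\lambda(t))+\cK_{t*}\dot\ell(t)$, and $\dot\ell\neq 0$ is not excluded by anything you have so far. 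Since $\Lambda_1\cap S^-$ is an $(n-1)$-dimensional submanifold of $\Lambda_1$, there is a priori a whole hypersurface of candidate lifts taking values in $S^-$; merely knowing $\lambda(t)\in S^-$ does not pin down $\lambda(t)=\wla(t)$, and $\nu(\lambda(t))$ need not equal $\nu(\wla(t))=\wu(t)$. What must actually be shown is $\ell(t)\equiv\well_1$. The paper does this by observing that, for fixed $t$, the function $\ell\mapsto K(\ell)-\scal{\ell}{\dot\xi(t)}+|v(t)\psi(\xi(t))|$ restricted to $\Sigma^-\cap T^*_{\xi(t)}M$ attains a minimum at $\lambda(t)$ (because $K\geq H_{\max}\geq h(\cdot,v)$ with equality there), so the vertical derivative of $K$ at $\lambda(t)$ must be parallel to $f_1(\xi(t))$; combined with $\dot\xi=\pi_*\dot\lambda$ this gives $(\pi\cK_t)_*\dot\ell(t)=-a(t)f_1(\xi(t))$ for some scalar $a(t)$, hence $\dot\ell(t)$ is a multiple of $\vPhi{-}$. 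But $\ell(t)\in S^-$ for all $t$ forces $\dot\ell(t)\in T_{\ell(t)}S^-$, while $\vPhi{-}$ is transverse to $S^-$ by Assumption~\ref{ass:  sglc} and \eqref{eq: tangent split}; thus $a\equiv 0$, $\dot\ell\equiv0$, $\ell(t)\equiv\well_1$, and $\xi=\wxi$. Finally, your remark that coerciveness of $\bJ$ on $\bcW$ ``enters here to exclude admissible second-order variations'' is misplaced: coerciveness is consumed entirely in Section~\ref{sec:invertibility} to establish the invertibility of $\pi\cH_t|_{\Lambda_1}$ (which is what makes the lift well-defined in the first place), and it plays no further explicit role in the strictness argument.
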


The proof of the strong local optimality of the reference trajectory
 follows the same lines of \cite[Theorem~4.1]{CP19} (see also \cite{SZ16}), 
thus we are just recalling the main arguments. We shall instead provide all details for the proof of the strictness part.
\begin{proof}
We define on $\R\times T^*M$  the one-form  
$\omega=\cH^{*}_t \varsigma - H_t \circ \cH_t \ud t$. Applying \cite[Lemma~3.3]{SZ16} we can prove that $\omega$ is exact on
$[0,T]\times \Lambda_1$.

Let $\eulO\subset \mathbb{R}\times M$ be a neighbourhood of the graph of $\wxi$ such that 
$\pi\cH \colon [0,T]\times \Lambda_1\to \eulO$ is invertible, with piecewise $C^1$ inverse.
Consider an admissible trajectory 
$\xi  \colon  [0,T]\to M$ of \eqref{eq: contr sys} whose graph is contained in $\eulO$, and call $v(t)$  its associated control;  set
\begin{equation}
\ell(t) =(\pi\cH_t)^{-1}(\xi(t)), \qquad 
\lambda(t)=\cH_t(\ell(t)).
\end{equation}
We define the closed path $\gamma  \colon [0,2T]\to [0,T]\times \Lambda_1$ as
\begin{equation}
\gamma(t)=
\begin{cases}
(t,\ell(t)) & t\in[0,T],\\
(2T-t,\well_1) & t\in[T,2T].
\end{cases}
\end{equation}
%
Integrating $\omega$ along $\gamma$, and recalling that $H_t$ is an over-maximised Hamiltonian, we obtain that
\begin{equation} \label{eq: costo minore uguale}
\int_0^T |\wu(t)\psi(\wxi(t))|\ud t\leq \int_0^T |v(t)\psi(\xi(t))|\ud t,
\end{equation}
that is, $\wxi$ is a strong local minimiser.

Assume now that $\wxi$ is not a strict minimiser, that is, there exists an admissible trajectory $\xi$ with graph in $\eulO$ for which equality holds in equation~\eqref{eq: costo minore uguale}; this is equivalent to
\begin{equation} \label{eq: stretto Ham}
\langle \lambda(t),(f_0+v(t) f_1)(\xi(t))\rangle -|v(t)\psi(\xi(t))|=H_t(\lambda(t)) \qquad \mbox{ a.e. } t\in[0,T], 
\end{equation}
that is, both $\langle \lambda(t),(f_0+v(t) f_1)(\xi(t))\rangle -|v(t)\psi(\xi(t))|$ and $H_t(\lambda(t))$ coincide with $H_{\max}(\lambda(t))$. Since $\xi(0)=\wxi(0)$, and by regularity of the first bang arc, then, for $t$ small enough, $h(\lambda(t),u)$ attains its maximum only for $u=\wu(t)=1$. This implies that $v(t)=\wu(t)$ and $\xi(t)=\wxi(t)$ as long as 
$\Phi^-(\lambda(t))>0$, that is, for $t\in[0,\wtau_1)$.

Analogously, since $\xi(T)=\wxi(T)$, we can apply, backward in time, an analogous argument, and prove that $u(t)\equiv \wu(t)$ for $t\in(\wtau_2,T]$ (see \cite{CP19} for  more details).

\medskip
For $t\in[\wtau_1,\wtau_2]$, equation \eqref{eq: stretto Ham} implies that $\lambda(t)\in S^-$. Thus, since $\vec{K}$ is tangent to $S^-$ (see 
Proposition~\ref{pro: K tangente}), then  
$\ell(t)\in S^-$ too. 
We claim that there exists a function $a \colon [\wtau_1,\wtau_2] \to \R$ such that
\begin{equation} \label{eq: dot ell}
\dot{\ell}(t)=-a(t)\vPhi-(\well_1).
\end{equation}
If so, since $\dot{\ell}(t)$ is tangent to $S^-$ and $\vPhi-$ is transverse to $S^-$, we obtain that $a(t)\equiv 0$ and $\dot{\ell}(t)\equiv 0$
for $t\in[\wtau_1,\wtau_2]$, that is $\xi(t)=\pi \cK_t( \well_1)=\wxi(t)$, which completes the proof.

In order to prove \eqref{eq: dot ell}, we first observe that
\begin{gather}  \label{derivata xi}
\dot{\xi}(t)=\pi_*\dot{\lambda}(t)=\pi_*\big(\vec{K} (\lambda(t)) + \cK_{t*} \dot{\ell}(t)\big) .
\end{gather}
On the other hand, it is not difficult to see that, for every $t \in [\wtau_1, \wtau_2]$, the function
\begin{equation}
\Sigma^-\cap T_{\xi(t)}^*M \ni \ell
\mapsto K(\ell) - \langle \ell, f_0(\xi(t))+v(t) f_1(\xi(t))\rangle + |v(t) \psi(\xi(t))| 
\end{equation}
has a minimum at $\ell=\lambda(t)$; thus, its derivative along variations in $\Sigma^-\cap T_{\xi(t)}^*M$ (that is, $\delta p$ with
$\langle \delta p, f_1(\xi(t))\rangle=0$) must be zero. This means that the derivative 
with respect to the vertical coordinates (i.e., the directions contained in $T_{\xi(t)}^*M $) must be parallel to $f_1(\xi(t))$, which implies that
\begin{equation} \label{derivata dep}
\pi_* \vec{K} (\lambda(t))=f_0(\xi(t))+v(t)f_1(\xi(t))+a(t)f_1(\xi(t))=\dot{\xi}(t)+a(t)f_1(\xi(t)),
\end{equation}
for some real function $a(\cdot)$.
Combining \eqref{derivata xi} with \eqref{derivata dep}, we obtain that
\begin{equation}
(\pi\cK_t)_* \dot{\ell}(t)+a(t)f_1(\xi(t))=0.
\end{equation}
Since $(\pi \cK_{t})_*^{-1}f_1(\xi(t)) = \vPhi{-}(\well_1)$, applying 
$(\pi \cK_{t})_*^{-1}$ to the equality here above we get the claim.
\end{proof}

\section{Example}\label{sec:example}

In this section, we apply our result to the optimal control problem in $\R^2$
\begin{equation}
\min_{|u(\cdot)|\leq 1} \int_0^T |u(t)x_2(t)|\;\ud t
\end{equation}
subject to the control system
\begin{equation}
\begin{cases}
\dot{x}_1=x_2,\\
\dot{x}_2=u-\attr x_2,\\
x_1(0)=0,\ x_2(0)=0,\\
x_1(T)= X>0, \ x_2(T)=0,
\end{cases}
\end{equation}
where $T$, $\attr$ and $X$ are given positive constants.
This problem, studied in \cite{boizot}, models the fuel consumption minimisation problem for an academic electric vehicle moving in one horizontal direction
with friction.
The authors prove that, if the final time $T$ is larger than
\begin{equation}
T_{lim} =\frac{1}{\attr} \log \Big((1+\sqrt{2}) e^{\attr^2 X} -1+\sqrt{(1+\sqrt{2}) e^{\attr^2 X} -1)^2-1}\Big),
\end{equation}
then 
the optimal control has the bang-singular-inactivated-bang structure described in equation \eqref{eq: struttura controllo}.
The corresponding trajectories satisfy PMP in normal form, with adjoint covector $\boldsymbol{p}(t)=(p_1(t),p_2(t))$, with $p_1(t)\equiv p_1^0$ for every $t\in [0,T]$.
In particular, the following relations hold

\begin{equation}  \label{tau1 es}
\wtau_1=\frac{1}{\attr} \log\Big(\frac{2}{2-p_1^0}\Big),\quad
\wtau_3-\wtau_2=\frac{1}{\attr}\log\big(1+\sqrt{2}\big),\quad 
 T-\wtau_3=\frac{1}{\attr}\log\Big(\frac{p_1^0+2(1+\sqrt{2})}{2(1+\sqrt{2})}\Big),
\end{equation}
and
\begin{equation} \label{uS es}
\wu_S(t)\equiv\sqrt{\attr p_2(0)}=\frac{p_1^0}{2}=\attr p_2(\wtau_1) \qquad \forall t\in[\wtau_1,\wtau_2].
\end{equation}
Coupling equations~\eqref{tau1 es} and \eqref{uS es}, we deduce that $p_1^0$ must be positive and smaller than 2. 
Finally, the authors prove that  along these extremals $x_2(t)$ is positive for every $t\in(0,T)$ and
\begin{equation}
x_2(t)\equiv p_2(t)=\frac{p_1^0}{2\attr} \qquad \forall t\in[\wtau_1,\wtau_2] .
\end{equation}

Using the notations of our paper, the drift $f_0$, the controlled vector field $f_1$ and the cost $\psi$ at a point $\wb x = (x_1, x_2)\in \R^2$ are respectively given by
\begin{equation} \label{eq: pullback esempio}
f_0(\wb x)=\begin{pmatrix} 
     x_2\\-\attr x_2
    \end{pmatrix},  \qquad
f_1(\wb x)=\begin{pmatrix} 
     0\\1
    \end{pmatrix}, \qquad
    \psi(\wb x)
    =x_2.
\end{equation}
Thus, the associated Hamiltonian functions have the following expressions
\begin{gather*}
F_0(\wb p,\wb x)=p_1x_2-\attr p_2 x_2, \qquad
F_1(\wb p,\wb x)=p_2, \qquad
\Phi^{\pm}(\wb p,\wb x) =  p_2  \pm|x_2|. 
\end{gather*}
Here below, we prove that Assumptions~\ref{ass:  nonzero at switch}--\ref{ass:  coerc} are met.

\subsection{Regularity assumptions}

 Since $x_2(t)$ never vanishes in $(0,T)$, then Assumptions~\ref{ass:  nonzero at switch}-\ref{ass:  non vanish} and \ref{ass:  segni} are trivially satisfied.
To verify that also Assumption~\ref{ass:  struttura arco} holds true, we are left to prove that the bang and inactivated arcs are regular.
In order to complete this task, we compute the iterated Lie brackets of the vector fields $f_0$ and $f_1$: 
\begin{equation} \label{eq: campi esempio}
f_{01}(\wb x)\equiv \begin{pmatrix} 
     -1\\ \attr     \end{pmatrix},
\qquad
f_{101} (\wb x)\equiv \begin{pmatrix} 
     0\\0
    \end{pmatrix}\qquad
\mbox{ad}^k_{f_0}f_{01} (\wb x)=\attr^k f_{01} (\wb x)\equiv \begin{pmatrix} 
     -\attr^k \\ \attr^{k+1}     \end{pmatrix}.
\end{equation}

\noindent 
\paragraph{Regularity of the first bang arc.} 
We have to prove that $\Phi^-(\boldsymbol{p}(t),\boldsymbol{x}(t))>0$ for $t\in[0,\wtau_1)$.
The claim follows directly from the computations
\begin{equation}
\Phi^-(\boldsymbol{p}(t),\boldsymbol{x}(t))=\frac{e^{-\attr t}}{4 \attr} \Big( e^{\attr t} (2-p_1^0)-2 \Big)^2,
\end{equation}
and equation~\eqref{tau1 es}.

\paragraph{Regularity along the inactivated arc.} 
We must verify that  $p_2(t)- x_2(t)<0<p_2(t)+ x_2(t)$ for $t\in(\wtau_2,\wtau_3)$.
By computations
\begin{equation}
p_2(t)\pm x_2(t) = \frac{p_1^0}{2\attr} \big( - e^{\attr(t-\wtau_2)} +2 \pm e^{-\attr(t-\wtau_2)} \big). 
\end{equation}
The claim follows from equation~\eqref{tau1 es}.

\paragraph{Regularity along the last bang arc.} 
A straightforward computation gives  $p_2(t)+x_2(t)\leq 0$ for $t\in[\wtau_3,T]$, where the equality holds if and only if $t=\wtau_3$.

\paragraph{Regularity at the switching points (Assumption~\ref{ass:  reg switch}).} 

\noindent 
At the first switching time $t=\wtau_1$, Assumption~\ref{ass:  reg switch} reads
\begin{equation}
0<
\big(F_{001}+F_{101} +L_{f_{01}}\psi-L_{f_0+f_1}L_{f_0}\psi\big) (\well_1)
=-\attr p_1^0 +\attr^2 p_2(\wtau_1) +\attr +\attr(1-\attr x_2(\wtau_1))
= \attr (2-p_1^0),
\end{equation}
which is verified thanks to equation~\eqref{tau1 es}.

\smallskip
\noindent 
At $t=\wtau_2$, the regularity condition is
\begin{equation}
0> 
\big(F_{001}-L^2_{f_0}\psi\big)(\well_2)
=-\attr p_1^0 +\attr^2 p_2(\wtau_2) -\attr^2 x_2(\wtau_2)
= -\attr p_1^0,
\end{equation}
which is trivially satisfied.

\smallskip
\noindent 
At $t=\wtau_3$, the regularity condition reads
\begin{equation}
0> 
\big(F_{01}-L_{f_0}\psi\big)(\well_3)
= -p_1^0 +\attr p_2(\wtau_3) +\attr x_2(\wtau_3)
= -p_1^0,
\end{equation}
which is verified.

\medskip
\noindent 
{\bf Assumption~\ref{ass:  sglc} (SGLC).} This Assumption is trivially satisfied, since 
\begin{align}
\mathbb{L}(\wb p, \wb x)&=F_{101}(\wb p, \wb x)+L_{f_{01}} \psi(\wb x)-L_{f_1}L_{f_0}\psi(\wb x)\\
 &= \attr-L_{f_1}(-\attr x_2)=2\attr >0 
  \qquad \forall (\wb p, \wb x)\in T^* \R^2.
\end{align}

\subsection{Second variation}

First of all, we  compute the pull-back vector fields,  by means of  
formula \eqref{eq: pullback dei campi}
 and of 
equations~\eqref{eq: pullback esempio}-\eqref{eq: campi esempio}. We obtain the following expressions:
\begin{gather}
g_t^1\equiv f_1+\frac{e^{\attr(t-\wtau_1)}-1}{\attr}f_{01}, \qquad
\dot{g}_t^1\equiv e^{\attr(t-\wtau_1)}f_{01}, \qquad t\in[\wtau_1,\wtau_2],\\
k_3=f_0-\wu_S  \frac{e^{\attr(\wtau_2-\wtau_1)}-1}{\attr}f_{01},\qquad
k=-f_1-\frac{e^{\attr(\wtau_3-\wtau_1)}-1}{\attr}f_{01}
=\begin{pmatrix}
\frac{e^{\attr(\wtau_3-\wtau_1)}-1}{\attr}\\ -e^{\attr(\wtau_3-\wtau_1)}
\end{pmatrix}.
\end{gather}

\paragraph{Admissible variations}
The pullback system \eqref{eq:zetaridotto} assumes the form
\begin{equation} \label{eq: W esempio}
\begin{cases}
\zeta_1(t)=-\int_{\wtau_1}^{t} w(s) e^{\attr(s-\wtau_1)}\ud s,\\
\zeta_2(t)=\ep_0+\attr\int_{\wtau_1}^{t} w(s) e^{\attr(s-\wtau_1)}\ud s,
\end{cases} 
\end{equation}
so that the space of extended admissible variation $\cW$ can be identified with  the following subspace of $ \R\times L^2([\wtau_1,\wtau_2],\R)$: \begin{equation}
\mathcal{W}=\Big\{(
\ep,w) \colon  \int_{\wtau_1}^{\wtau_2}w(s)e^{\attr(s-\wtau_1)}\ud s=\ep\frac{e^{\attr(\wtau_3-\wtau_1)}-1}{\attr}\Big\}.
\end{equation}

It is easy to see that $\wh{S}_t^0$ is linear with respect to the state; since both $\dot{g}_t^1$ and $k$ are constant
with respect to the basepoint, we choose  $\wh{\gamma}_T$ as a linear function of the state, so that all second derivatives of the term 
$\wh{\gamma}_T+\wh{S}_T^0-\wh{S}_{\wtau_2}^0$ are zero. 
In particular, with this choice we obtain 
$\uD^2 \big(\alpha+\wh{\gamma}_T+\wh{S}_T^0 \big)[\ep f_1]^2=\ep^2$. 

Keeping all these facts into account,
the second variation reads
\begin{equation} \label{eq: secvar esempio}
J_e\se[(
\ep,w)]^2 = \ep^2(1+\sqrt{2} \wu_S) +\int_{\wtau_1}^{\wtau_2} \attr w^2(t)-\attr w(t) \zeta_2(t) e^{-\attr(t-\wtau_1)}\ud t. 
\end{equation}

\subsubsection{Coerciveness of the second variation}

We recall that, given any subspace $\mathcal{V}\subset \cW$, $J_e\se$ is coercive on $\mathcal{W}$ if and only if it is coercive both $\mathcal{V}$ and $\mathcal{V}^{\bot}$, $\mathcal{V}^{\bot}$ denoting the orthogonal complement of $\mathcal{V}$  in $\mathcal{W}$ with respect to the bilinear form associated with $J_e\se$.
To prove the coerciveness of \eqref{eq: secvar esempio}, we  choose
\begin{equation}
\mathcal{V}=\Big\{(
0,w) \colon  \int_{\wtau_1}^{\wtau_2}w(s)e^{\attr(s-\wtau_1)}\ud s=0\Big\}.
\end{equation}
\paragraph{Coerciveness on $\mathcal{V}$.} In order to prove the claim, we apply the characterization of coerciveness given in \cite[Lemma~5]{ASZ98a}, that is, we study the LQ optimal control problem 
\begin{equation}
\min \int_{\wtau_1}^{\wtau_2} \attr w^2(t)-\attr w(t) z_2(t) e^{-\attr(t-\wtau_1)}\ud t
\end{equation}
subject to 
\begin{equation}
\begin{cases}
\dot{\wb z}(t)=w(t) e^{\attr(t-\wtau_1)} f_{01},\\
\wb z(\wtau_1)=\wb z(\wtau_2)=0,                                        
\end{cases} \qquad  w\in L^2([\wtau_1,\wtau_2])  \colon  \int_{\wtau_1}^{\wtau_2}w(t)e^{\attr(t-\wtau_1)}\ud t=0.
\end{equation}
The maximised Hamiltonian $\Hex$ associated with this problem is given by
\begin{equation} \label{eq: max Ham ex}
\Hex (\wb p,\wb z , t)= \frac{1}{4\attr}\big((\attr p_2-p_1)e^{\attr(t-\wtau_1)}+\attr z_2e^{-\attr(t-\wtau_1)} \big)^2.
\end{equation}
The solutions of the Hamiltonian system associated with \eqref{eq: max Ham ex} are given by
\begin{equation}
\begin{cases}
z_1(t)=\attr (a+b(t-\wtau_1))e^{\attr(t-\wtau_1)}, \\
z_2(t)=(a+b(t-\wtau_1))e^{\attr(t-\wtau_1)}, \\
p_1(t)\equiv p_1(\wtau_1), \\
p_2(t)=\frac{p_1(\wtau_1)}{\attr} + \big(\frac{2b+a}{\attr} +b (t-\wtau_1)\big) e^{-\attr(t-\wtau_1)},
\end{cases}
\end{equation}
for some real constants $a,b$.
The boundary conditions $\wb z(\wtau_1) = \wb z(\wtau_2) = 0$ are satisfied only for $a=b=0$, that is $\wb z(t)\equiv 0$. Thanks to \cite[Lemma~5]{ASZ98a}, this implies that $J\se_e$ is coercive
on $\mathcal{V}$.

\paragraph{Coerciveness on $\mathcal{V}^{\bot}$.}
First of all, we characterize $\mathcal{V}^{\bot}$.
The bilinear form associated with $J\se_e$  is given by the formula 
\begin{equation} \label{eq: bilinear esempio}
J^b[\delta e,\widetilde{\delta e}] = \ep \widetilde{\ep}(1+\sqrt{2}\wu_S)+\frac{1}{2}\int_{\wtau_1}^{\wtau_2}
2\attr w(t)\widetilde{w}(t)-\attr\big(w(t)\widetilde{\zeta}_2(t)+ \widetilde{w}(t)\zeta_2(t) \big)e^{-\attr(t-\wtau_1)}\ud t,
\end{equation}
with $\delta e,\wt{\delta e}\in \cW$.
Introducing $p_2(\cdot)$ as the solution of the Cauchy problem
\begin{equation}
\begin{cases}
 \dot{p}_2(t)= w(t)e^{\attr(t-\wtau_1)}, \\
 p_2(\wtau_1)=0,
\end{cases} 
\end{equation}
equation~\eqref{eq: bilinear esempio} becomes
\begin{equation}
J^b[\delta e,\widetilde{\delta e}] = \ep \widetilde{\ep}(1+\sqrt{2}\wu_S)-\attr \widetilde{\zeta}_2(\wtau_2)p_2(\wtau_2)
+\frac{1}{2}\int_{\wtau_1}^{\wtau_2}
\attr \widetilde{w}(t)\Big( 2w(t) -\zeta_2(t)e^{-\attr(t-\wtau_1)}+p_2(t) e^{-\attr(t-\wtau_1)}\Big)\ud t. 
\end{equation}
It is immediate to see that an admissible variation $\delta e$ belongs to $\mathcal{V}^{\bot}$ if and only if
$e^{-\attr(t-\wtau_1)} \Big( 2w(t) -\zeta_2(t)e^{-\attr(t-\wtau_1)}+p_2(t) e^{-\attr(t-\wtau_1)}\Big) $
does not depend on $t$ (indeed, the orthogonal complement to zero-mean functions in $L^2([\wtau_1,\wtau_2])$
is the space of constant functions); we thus set
\begin{equation} \label{eq: caratterizzazione ortogonale esempio}
C_{\ep}=e^{-\attr(t-\wtau_1)} \Big( 2w(t) -\zeta_2(t)e^{-\attr(t-\wtau_1)}+p_2(t) e^{-\attr(t-\wtau_1)}\Big) ,
\end{equation} so that 
\begin{equation} \label{eq: Jbot esempio}
J\se[\delta e]^2|_{\mathcal{V}^{\bot}} = \ep^2(1+\sqrt{2}\wu_S)-\ep\attr p_2(\wtau_2) e^{(\wtau_3-\wtau_1)}
+\frac{C_{\ep}}{2}\ep (e^{(\wtau_3-\wtau_1)}-1). 
\end{equation}

From equation~\eqref{eq: caratterizzazione ortogonale esempio}, combined with \eqref{eq: W esempio}, we can deduce
that $w(\cdot)$ is smooth. 
Differentiating \eqref{eq: caratterizzazione ortogonale esempio} with respect to $t$, we obtain
\begin{equation}\label{eq: w linear}
 \dot{w}(t) e^{-\attr(t-\wtau_1)}-\attr w(t)e^{-\attr(t-\wtau_1)} +\attr  \zeta_2(t) e^{-2\attr(t-\wtau_1)} =0.
\end{equation}
Multiplying \eqref{eq: w linear} by $e^{2\attr(t-\wtau_1)}$ and differentiating again, we obtain 
$\ddot{w}(t)=0$,
i.e.~$w(\cdot)$ is an affine function. 
Plugging into the previous equations we obtain
\begin{equation}
C_{\ep}=\ep\frac{2\sqrt{2}+\attr (\wtau_2-\wtau_1)}{\attr (\wtau_2-\wtau_1)}
\end{equation}
and
\begin{equation}
p_2(\wtau_2)=
\frac{\ep}{\attr^2(\wtau_2-\wtau_1)}
 \Big(  -e^{-\attr(\wtau_2-\wtau_1)} \left(2\sqrt{2}+(1+\sqrt{2}) \attr (\wtau_2-\wtau_1)\right)+
 2\sqrt{2}+\attr(\wtau_2-\wtau_1)\Big).
\end{equation}
Substituting these expressions into \eqref{eq: Jbot esempio}, we obtain
\begin{equation}
J\se[\delta e]^2|_{\mathcal{V}^{\bot}}=\ep^2 \left( 2+\sqrt{2} +\frac{2}{\attr(\wtau_2-\wtau_1)}+\sqrt{2}\wu_S\right),
\end{equation}
which is  positive whenever $\ep\neq 0$.

\subsubsection{The over-maximised Hamiltonian}

Although the expressions of the Hamiltonians $H_0$ and $K$ 
are not necessary 
to deduce the optimality of the reference extremal, we provide their explicit formulas,  to give an insight on their construction.

Since, along the extremal, $x_2 >0$, we do not use the absolute value in the expressions of $\Phi^-$. 
First of all, we see that $\Sigma^-=\{(\wb p,\wb x) \colon p_2-x_2=0\}$ and $S^-=\{(\wb p,\wb x) \colon  p_2=x_2 \mbox{ and } p_1=\attr(p_2+x_2)\}$.
Straightforward computations yield 
\begin{equation}
\vartheta(\wb p, \wb x)=\frac{p_1}{2\attr}-\frac{x_2+p_2}{2}, \qquad 
H_0(\wb p,\wb x)=\frac{1}{4\attr} \big(p_1-\attr(p_2-x_2)\big)^2.
\end{equation}
In particular, we obtain that
\begin{equation}
H_0(\wb p,\wb x) -F_0(\wb p,\wb x)= \frac{1}{4\attr} \Big( (p_1-2\attr x_2)^2 +4\attr^2 x_2(p_2-x_2) \Big).
\end{equation}
This formula shows that $H_0\geq F_0$ only for $(\wb p,\wb x)\in \Sigma^-$, with equality for  $(\wb p,\wb x)\in S^-$.
Finally $\nu(\wb p, \wb x) = \dfrac{p_1}{2}$ for any $(\wb p, \wb x) \in T^*\R^2$, so that 
\begin{equation}
K(\wb p, \wb x)  = \frac{1}{4\attr} \big(p_1-\attr(p_2-x_2)\big)^2 + \dfrac{p_1}{2}(p_2 - x_2).
\end{equation}

\section{Conclusions}

In this paper, we develop the analysis of sufficient optimality conditions for 
generalised $L^1$-optimal control problems started in \cite{CP19}. 
In particular, we consider the case in which the reference extremal contains a singular arc. As already observed in the Introduction, this fact  brings
significant technical difficulties, in particular the necessity of computing the second variation of a singular arc for a Bolza problem, which, to our 
knowledge, has not been done before.

We believe that \cite{CP19} and the current paper, altogether, provide a solid basis 
for the study of sufficient optimality conditions for problems
of the form \eqref{eq: cost}-\eqref{eq: contr sys}: indeed, they give an insight of how to figure out more complex cases (as the concatenation of
several bang, singular and inactivated extremals).

Two only issues are left over:  the possibility of bang-bang concatenations (here prevented by Assumption~\ref{ass: nonzero at switch}), that, in our
opinion, can be treated providing minor changes to the existing results, taking advantage of the techniques developed, for instance, in~\cite{PSp15}; the case of degenerate singular arcs, which, even if it is 
a non-generic case, is nevertheless theoretically challenging. 
The authors are planning to  consider this last situation.

\appendix

\section{Sketch of the computation of the second variation}
\label{app secvar}

We first recall that  the reference extremal $\wla$, associated with the control \eqref{eq: struttura controllo},  satisfies the following equation
\begin{equation} \label{eq: solution lambda}
\wla(t) =\Big(\well_1+\int_{\wtau_1}^t  |u(s)|\  d|\wpsi{s}|(\wxu) \ud s \Big)  \circ \wS{t*}^{-1}
=\Big(\well_1 + \ud \wS{t}^0{(\wxu)} \Big)  \circ \wS{t*}^{-1}
\qquad \forall t\in[0,T]. 
\end{equation}

Moreover  the cost realised by the trajectory $\boldsymbol{\xi}$ associated with the control $u$  can be written as 
$J(u)=\widehat{S}_T^{\circ}(\boldsymbol{\eta}_T)-\widehat{S}_0^{\circ}(\boldsymbol{\eta}_0)+\widehat{\gamma}_T(\eta_T)
+\widehat{\gamma}_0(\eta_0)$, thanks to equation \eqref{eq: costo J}.
Further, we notice that the variations $\delta u=u-\widehat{u}$ can be encoded in three terms, that is, the variation $\delta v$ of the control along the singular arc, 
and the variations $\ep_3=(\tau_3-\wtau_2)-(\wtau_3-\wtau_2)$ and $\varepsilon_4=(T-\tau_3)-(T-\wtau_3)$ of the length of the third and fourth arc, related by the constraint $\ep_3 + \ep_4 = 0$.
In particular, there is no variation of the terms $-\widehat{S}_0^{\circ}(\boldsymbol{\eta}_0)+\widehat{\gamma}_0(\eta_0)$. The second variation $J''[\delta u]$ is thus given by  
\begin{equation}
\frac{\partial^2 J}{\partial u^2} = \frac{\partial^2 \eta^0_T}{\partial u^2} +
\left( \uD^2 \widehat{S}^0_T(\wq_0) + \uD^2 \wh{\gamma}_T (\wq_0) \right) \left[\frac{\partial \eta_T}{\partial u}\right]^2
+\scal{ \ud \widehat{S}^0_T(\wq_0) + \ud \wh{\gamma}_T (\wq_0)}{ \frac{\partial^2 \eta_T}{\partial u^2}}
\end{equation}
where, for the differential of $\wh S^0_t$,  we have used the notation  established in Remark~\ref{re:notazione} and where the derivatives with respect to $u$ have to be intended as  the derivatives with respect to the variables $(v,\ep_3,\ep_4)$, which can be computed using the equations 
\begin{align}
\eta_T^0 &=   \int_{\wtau_1}^{\wtau_2} \!\! \big( 
	(v(s)-\wu(s)) \wh{\psi}_s(\eta_s)-L_{\dot{\eta}_s} \wh{S}_s^0(\eta_s)
	\big)  \ud s %
	 -\int_{\wtau_2}^{\wtau_3} \!\! 
L_{\dot{\eta}_s} \wh{S}_s^0(\eta_s) \ud s  %
+ 
\int_{\wtau_3}^{T} \!\! \Big( \frac{\ep_4 
}{T-\wtau_3} \wh{\psi}_s(\eta_s)-L_{\dot{\eta}_s} \wh{S}_s^0(\eta_s) \Big) \ud s, \\
\eta_T &= \wq_1+\int_{\wtau_1}^{\wtau_2} (v(s)-\wu(s)) g_s^1(\eta_s)\ud s + \int_{\wtau_2}^{\wtau_3} \frac{\ep_3 
}{\wtau_3-\wtau_2} k_3(\eta_s) \ud s +
\int_{\wtau_3}^{T} \frac{\ep_4
}{T-\wtau_3} k_4(\eta_s) \ud s.
\end{align}
We first show  that  the first order approximation of $J$, evaluated at $(v, \ep_3, \ep_4) =  (\wh u, 0,0)$  is null, so that the second variation is intrinsically well defined. We point out that, thanks to  \eqref{eq: solution lambda}, $\ud \wh{S}_T^0(\wq_1) + \ud\wh\gamma_T(\wq_1) = - \well_1$, so that 
\begin{equation}
\begin{split}  
\Big\langle\dfrac{\partial J}{\partial v},\delta v(\cdot)\Big\rangle &=  
\dfrac{\partial \eta_T^0}{\partial v}\delta v(\cdot)  - \scal{\well_1 }{\dfrac{\partial \eta_T}{\partial v} }  \delta v(\cdot) 
=    \int_{\wtau_1}^{\wtau_2} \delta v(s) \left( 
\wh{\psi}_s(\wq_1) - L_{g^1_s} \wh{S}_s^0(\wq_1)
\right) \ud s + 
\int_{\wtau_1}^{\wtau_2} \delta v(s) 
\scal{\well_1}{g^1_s(\wq_1)}
\ud s \\
&=    \int_{\wtau_1}^{\wtau_2} \delta v(s) \left( 
\psi(\wxi(s)) - L_{g^1_s} \wh{S}_s^0(\wq_1)
- F_1(\wla(s)) + L_{g^1_s} \wh{S}_s^0(\wq_1)
\right)\ud s  = 0 .
\end{split}
\end{equation}
Since only variations where $\ep_4 = -\ep_3 $ are admissible, it suffices to show that  $\dfrac{\partial J}{\partial \ep_3}  = \dfrac{\partial J}{\partial \ep_4} $.
Notice that 
\begin{equation}\label{eq:truccoquartoarco}
L_{k_4} \big(\wh{S}^0_T-\wh{S}^0_{\wtau_2}\big)(\wq_1)=  L_{k_4} \big(\wh{S}^0_T-\wh{S}^0_{\wtau_3}\big)(\wq_1)=\int_{\wtau_3}^TL_{k_4} \wh{\psi}_s(\wq_1) \ud s= \wh{\psi}_T(\wq_1)-\wh{\psi}_3(\wq_1),
\end{equation}
since $\wh{S}^0_{\wtau_2}=\wh{S}^0_{\wtau_3}$ and, for every $s\in[\wtau_3,T]$, $\frac{\ud}{\ud s}\wh{\psi}_s=L_{k_4}\wh{\psi}_s$.
Finally, we get
\begin{align}
\begin{split}  
\dfrac{\partial J}{\partial  \ep_3} &= 
 \dfrac{\partial \eta_T^0}{\partial  \ep_3}  + \scal{\well_1 }{\dfrac{\partial \eta_T}{\partial  \ep_3} }  
 =   - L_{k_3} \wh{S}_{\wtau_2}^0(\wq_1) - \scal{\well_1}{k_3(\wq_1)} 
 =   - L_{k_3} \wh{S}_{\wtau_3}^0(\wq_1) - \scal{\well_1}{k_3(\wq_1)} 
\end{split}\\
\begin{split}  
\dfrac{\partial J}{\partial \ep_4} &= 
\dfrac{\partial \eta_T^0}{\partial\ep_4}  + \scal{\well_1 }{\dfrac{\partial \eta_T}{\partial \ep_4} }  
=   - L_{k_4} \wh{S}_{T}^0(\wq_1) + \wpsi{T}(\wq_1) - \scal{\well_1}{k_4(\wq_1)}
=   - L_{k_4} \wh{S}_{\wtau_3}^0(\wq_1) + \wpsi{\wtau_3}(\wq_1) - \scal{\well_1}{k_4(\wq_1)}
\end{split}
\end{align}
so that 
\begin{equation}
\begin{split}
 \dfrac{\partial J}{\partial\ep_3} -  \dfrac{\partial J}{\partial \ep_4} 
&=   L_{k_4 - k_3} \wh{S}_{\wtau_3}^0(\wq_1) 
 - \wpsi{\wtau_3}(\wq_1) - \scal{\well_1}{(k_4 - k_3)(\wq_1)} \\ 
 &=   L_{k_4 - k_3} \wh{S}_{\wtau_3}^0(\wq_1) 
 - \wpsi{\wtau_3}(\wq_1) 
 - \scal{- \uD \wh{S}^0_{\wtau_3} \wh{S}_{\wtau_3 *}^{-1} +  \well_3}{f_1(\wq_3)} 
 = - \Phi^+(\well_3) = 0.
\end{split}
\end{equation}

Let us now compute the second variation.
By some long computations, it is possible to obtain
\begin{equation}\label{eq:varsecappendix}
\begin{split}
J\se[\delta v,\varepsilon_3,\varepsilon_4]^2 &:= 
 \int_{\wtau_1}^{\wtau_2}  \int_{\wtau_1}^{s} \delta v(s) \delta v(r) 
L_{g^1_r}\left( 
{ \wpsi{s}}  + L_{g^1_s}
{ \left(\wh\gamma_T + \wS{T}^0 - \wS{s}^0\right)} \right)(\wxu) \ud r\ud s \\
& + \ep_3 \int_{\wtau_1}^{\wtau_2}  \delta v(s) \liederder{g^1_s}{k_3}{\left(\wh\gamma_T +  \wS{T}^0 - \wS{\wtau_2}^0\right)}\ud s 
 + \ep_4 \int_{\wtau_1}^{\wtau_2} \delta v(s) \lieder{g^1_s}{\left( \abs{\wpsi{T}} + L_{k_4}{\wh\gamma_T} \right) } \ud s \\
 & + \dfrac{\ep_3^2}{2}\liederdo{k_3}{ \left(\wh\gamma_T + \wS{T}^0 - \wS{\wtau_2}^0\right)}  
+ \dfrac{\ep_4^2}{2}L_{k_4}\left( \abs{\wpsi{T}} +  L_{k_4}{ \wh\gamma_T }  \right)(\wxu) \\
& + {\ep_3\ep_4} L_{k_3}{
\left(	\abs{\wpsi{\wtau_3}} + L_{k_4}{ \wh\gamma_T   } \right)
}(\wxu) \\
&= 
 \int_{\wtau_1}^{\wtau_2}  \delta v(s)
L_{\delta \eta_r}\left( 
{ \wpsi{s}}  + L_{g^1_s}
{ \left(\wh\gamma_T + \wS{T}^0 - \wS{s}^0\right)} \right)(\wxu) \ud s \\
& - \ep  \liederder{\delta \eta(\wtau_2)-\delta \eta(\wtau_1)}{k_3}{\left(\wh\gamma_T +  \wS{T}^0 - \wS{\wtau_2}^0\right)} 
 + \ep  \lieder{\delta \eta(\wtau_2)-\delta \eta(\wtau_1)}{\left( \abs{\wpsi{T}} + L_{k_4}{\wh\gamma_T} \right) } \ud s \\
 & + \dfrac{\ep^2}{2}\liederdo{k_3}{ \left(\wh\gamma_T + \wS{T}^0 - \wS{\wtau_2}^0\right)}  
+ \dfrac{\ep^2}{2}L_{k_4}\left( \abs{\wpsi{T}} +  L_{k_4}{ \wh\gamma_T }  \right)(\wxu) \\
& - {\ep^2} L_{k_3}{
\left(	\abs{\wpsi{\wtau_3}} + L_{k_4}{ \wh\gamma_T   } \right)
}(\wxu),\\
\end{split}
\end{equation}
where 
we set $\ep_4=-\ep_3=\ep$.  
Integrating backward in time system \eqref{eq: delta eta}, and applying the constraints, we see that $\delta \eta_{\wtau_2} -\delta \eta_{\wtau_1}=-\ep k(\wq_1)$.
Finally, plugging equation \eqref{eq:truccoquartoarco} into \eqref{eq:varsecappendix} we obtain equation~\eqref{eq: secvar 1}.

\section{Some technical results and proofs} \label{app:  proofs}

\begin{proposition}\label{prop: preserva Phi}
For every smooth function $\upsilon (t,\ell) \colon  \R\times T^*M \to \R$,  the Hamiltonian flow 
associated with $H_0(\ell)+\upsilon(t,\ell) \Phi^-(\ell)$ preserves $\vPhi-$ on $\Sigma^-$, that is,   
if $\eulF_t$ is the flow associated with $H_0(\ell)+\upsilon(t,\ell) \Phi^-(\ell)$ from  $\wtau_1$ to time $t$, then
\begin{equation} \label{eq: preserva Phi}
\eulF_{t*}^{-1} \vPhi-\circ \eulF_t(\ell)=\vPhi-(\ell),
\qquad \forall \ell\in \cO_1.
\end{equation}

\end{proposition}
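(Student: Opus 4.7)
The plan is to exploit the symplectic nature of $\eulF_t$ and to reduce the statement to a Poisson-bracket computation on $\Sigma^-$. Since $\eulF_t$ is the flow of the (time-dependent) Hamiltonian $K_t := H_0 + \upsilon(t,\cdot)\Phi^-$, it is a symplectomorphism, so $\eulF_t^*\vPhi- = \overrightarrow{\Phi^-\circ\eulF_t}$. Writing $Y(t,\ell) := \eulF_{t*}^{-1}\vPhi-\circ\eulF_t(\ell)$, the standard Lie-derivative formula for a time-dependent flow yields the initial-value problem
\begin{equation*}
\frac{d}{dt}Y(t,\ell) = \eulF_{t*}^{-1}\big[\vec K_t,\vPhi-\big]\circ\eulF_t(\ell), \qquad Y(\wtau_1,\ell) = \vPhi-(\ell),
\end{equation*}
so it suffices to control $[\vec K_t, \vPhi-]$ along the orbit $t\mapsto\eulF_t(\ell)$, which is contained in $\Sigma^-$ by invariance (Proposition~\ref{pro: H0}).

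For the bracket, I would use $[\vec F,\vec G] = \overrightarrow{\poissonbr{F}{G}}$ together with the Leibniz rule for Poisson brackets. The contribution $\poissonbr{H_0}{\Phi^-} = -L_{\vPhi-}H_0$ vanishes on $\cU$ by the very construction of $H_0$ as constant along integral lines of $\vPhi-$ (see Proposition~\ref{pro: H0}). The remaining piece factors as $\poissonbr{\upsilon(t,\cdot)\Phi^-}{\Phi^-} = \Phi^-\poissonbr{\upsilon(t,\cdot)}{\Phi^-}$, which vanishes on $\Sigma^-$; a further application of Leibniz at the vector-field level gives, for $\ell\in\Sigma^-$,
\begin{equation*}
[\vec K_t,\vPhi-](\ell) = \poissonbr{\upsilon(t,\cdot)}{\Phi^-}(\ell)\,\vPhi-(\ell) = -\big(L_{\vPhi-}\upsilon(t,\cdot)\big)(\ell)\,\vPhi-(\ell).
\end{equation*}
Hence the bracket is proportional to $\vPhi-$ all along the orbit, and the vectorial ODE for $Y$ collapses to a scalar linear equation, whose solution is of the form $Y(t,\ell) = \alpha(t,\ell)\,\vPhi-(\ell)$ with $\alpha(\wtau_1,\ell)=1$.

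The final --- and only delicate --- step is to conclude $\alpha\equiv 1$, which is tantamount to $L_{\vPhi-}\upsilon(t,\cdot)\equiv 0$ along the orbit. This infinitesimal $\vPhi-$-invariance of $\upsilon$ is built into the two instances in which Proposition~\ref{prop: preserva Phi} is applied in the paper: in $K = H_0 + \nu\Phi^-$ the feedback $\nu$ is extended by construction to be constant along integral lines of $\vPhi-$, while in $\wh H_t = H_0 + \wh u(t)\Phi^-$ the coefficient $\wh u(t)$ is $\ell$-independent. The main obstacle in extending the statement to a truly arbitrary $\upsilon$ is precisely the appearance of this scalar factor: the coisotropy of $\Sigma^-$, whose characteristic line is spanned by $\vPhi-$, only forces the flow to preserve the direction $\mathbb{R}\vPhi-$, and the additional hypothesis on $\upsilon$ above is what upgrades the preservation from a line to the vector field itself.
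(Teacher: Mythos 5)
Your work is correct and, in carrying out the bracket computation carefully, you have actually uncovered a small gap in the paper's own proof. Both arguments reduce the claim, via the time-dependent Lie-derivative formula, to the vanishing of $[\vec{K}_t,\vPhi-]$ along the orbit in $\Sigma^-$, with $\vec{K}_t = \vec{H}_0 + \upsilon\vPhi- + \Phi^-\vec{\upsilon}$. For $[\vec{H}_0,\vPhi-]$ you both arrive at zero, but by different means: the paper unpacks \eqref{eq: wF} and computes the bracket explicitly (a step that, strictly speaking, would require \eqref{eq: wF} to hold on a full neighbourhood, whereas the paper establishes it only on $\Sigma^-$), while you note that $L_{\vPhi-}H_0\equiv 0$ on the open set $\cU$, as already stated in the proof of Proposition~\ref{pro: H0}, so that $[\vec{H}_0,\vPhi-]=\overrightarrow{\poissonbr{H_0}{\Phi^-}}\equiv 0$ on $\cU$; your route is cleaner and rests on an already-proven fact. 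The real divergence is in the remaining term: the paper discards $[\upsilon\vPhi-+\Phi^-\vec{\upsilon},\vPhi-]$ on $\Sigma^-$, with the sole justification that $\Phi^-(\eulF_t(\ell))=0$. But while $[\Phi^-\vec{\upsilon},\vPhi-]=\Phi^-[\vec{\upsilon},\vPhi-]-(L_{\vPhi-}\Phi^-)\vec{\upsilon}$ does vanish on $\Sigma^-$, the term $[\upsilon\vPhi-,\vPhi-]=-(L_{\vPhi-}\upsilon)\vPhi-$ does not, unless $L_{\vPhi-}\upsilon=0$ there. Your scalar ODE for the $\vPhi-$-component and your observation that for a generic $\upsilon$ the flow preserves only the characteristic line $\R\,\vPhi-$ of $\Sigma^-$, not the vector field itself, are both correct. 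As you also verify, the two choices of $\upsilon$ to which the proposition is actually applied ($\upsilon=\nu$, which the paper deliberately extends to be constant along the integral lines of $\vPhi-$, and $\upsilon=\wh u(t)$, independent of $\ell$) do satisfy $L_{\vPhi-}\upsilon=0$, so Proposition~\ref{pro: K tangente}, Lemma~\ref{lemma:  K e H uguale} and the downstream invertibility arguments are unaffected. The statement of Proposition~\ref{prop: preserva Phi} should nonetheless carry the explicit hypothesis $L_{\vPhi-}\upsilon(t,\cdot)\vert_{\Sigma^-}\equiv 0$, and the phrase ``since $\Phi^-(\eulF_t(\ell))=0$'' should be replaced by the Leibniz computation you give.
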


\begin{proof}
We notice that, for every $\ell\in \Sigma^-$,
\begin{align*}
\frac{\partial }{\partial t} \eulF_{t*}^{-1} \vPhi-\circ \eulF_t(\ell) &=
\eulF_{t*}^{-1} [\vec{H}_0+\upsilon(t,\cdot)\vPhi-+\Phi^-(\cdot) \vec{\upsilon}(t,\cdot), \vPhi-]\circ \eulF_t(\ell)\\
&=\eulF_{t*}^{-1} [\vec{H}_0, \vPhi-]\circ \eulF_t(\ell),
\end{align*}
since $\Phi^-(\eulF_t(\ell))=0$.

Using equation~\eqref{eq: wF}, for every $\ell\in \Sigma^-$ we have
\begin{align}
[\vec{H}_0,\vPhi-](\ell)&= [\exp(-t\vPhi-)_*\vec{F}_0\circ \exp(t(\ell)\vPhi-)|_{t=\vartheta(\ell)},\vPhi-]|_{\ell}\\
&=[\exp(-t\vPhi-)_*\vec{F}_0\circ \exp(t\vPhi-),\vPhi-](\ell)|_{
t=\vartheta(\ell) 
}\\
&+\big(\langle d\vartheta,\vPhi{-}\rangle[\vec{F}_0,\vPhi-]\big) (\exp(\vartheta(\ell)\vPhi-))\\
&=\big(1+\langle d\vartheta,\vPhi{-}\rangle)[\vec{F}_0,\vPhi-]\big)|(\exp(\vartheta(\ell)\vPhi-)),
\end{align}
which is null by \eqref{dtheta}.

\end{proof}

\subsubsection*{Proof of Lemma~\ref{lemma:  K e H uguale}}

Set $\cG_t=\wh{\cH}_t^{-1}\circ\cK_t$ and notice that $\cG_t(\well_1)=\well_1$ for every $t\in[\wtau_1,\wtau_2]$.
For every $\ell\in\Sigma^-$, 
\begin{equation}
\frac{\partial}{\partial t}\cG_t(\ell) =\big((\nu-\wu(t))\wh{\cH}_{t*}\vPhi-\big) \circ \wh{\cH}_t|_{\cG_t(\ell)}=\big(\nu(\wh{\cH}_t\circ\cG_t(\ell))-\wu(t)\big)\vPhi-(\cG_t(\ell)),
\end{equation}
thanks to Proposition~\ref{prop: preserva Phi}.
Since $\vPhi-$ is tangent to $\Lambda_1$,  we obtain that 
$\cG_{t}(\Lambda_1)\subset\Lambda_1$. $\cG_{t*}(T_{\well_1}\Lambda_1) = T_{\well_1}\Lambda_1$ for every $t\in[\wtau_1,\wtau_2]$,
that is,  $\wh{\cH}_{t*}(T_{\well_1}\Lambda_1)=\cK_{t*}(T_{\well_1}\Lambda_1)$  and claim (1) is proved.

By simple computations, we can prove that $\cG_t$ is the Hamiltonian flow associated with the Hamiltonian 
$G_t=(K-\wh H_t)\circ \wh{\cH}_t$. In particular, from the fact that $dG_t|_{\well_1}=0$, we obtain that 
\begin{equation} \label{G secondo}
G_t\se :=\frac{1}{2}\uD^2 G_t|_{\well_1} =\frac{1}{2} \big(d\Phi^-\otimes d\nu+d\nu \otimes d\Phi^-  \big)|_{\wla(t)}
[\wh{\cH}_{t*}\cdot]^2 =\langle d\Phi^-|_{\wla(t)},\wh{\cH}_{t*}\cdot\rangle\langle d\nu|_{\wla(t)},\wh{\cH}_{t*}\cdot\rangle
\end{equation}
is a well  defined Hamiltonian function on $T_{\well_1}(T^*M)$,
and that $\cG_{t*}$ is the Hamiltonian flow associated with $G_t\se$, see \cite{mrr94}.

We now restrict ourselves to vectors $\dl\in T_{\well_1} \Sigma^-$. 
By definition, and using the fact that $\langle d\Phi^-|_{\wla(t)},\wh{\cH}_{t*}\dl\rangle=0$,
it follows that  $\vec{G}_t\se(\dl)=\langle d\nu|_{\wla(t)},\wh{\cH}_{t*}\dl\rangle
\overrightarrow{\langle d\Phi^-|_{\wla(t)},\wh{\cH}_{t*}\dl\rangle}$.
To compute this quantity, we set $\varphi(\dl)=\langle d\Phi^-|_{\wla(t)},\wh{\cH}_{t*}\dl\rangle$, and choose some vector 
$X\in T_{\dl} (T_{\well_1}(T^*M))\simeq T_{\well_1}(T^*M)$; then
\begin{align*}
\boldsymbol{\sigma}_{\dl}(X,\vec{\varphi}(\dl)) = \langle d\varphi|_{\dl},X\rangle= \langle d\Phi^-|_{\wla(t)},
\wh{\cH}_{t*}X \rangle&=
\boldsymbol{\sigma}_{\wla(t)} \big(\wh{\cH}_{t*}X,\vPhi-(\wh{\cH}_{t}(\well_1))\big)\\
&=\boldsymbol{\sigma}_{\well_1} \big(X,\wh{\cH}_{t*}^{-1}\vPhi-(\wh{\cH}_{t}(\well_1)) \big),
\end{align*}
so that,  by \eqref{eq: preserva Phi},
\begin{equation} \label{eq: vec G''}
\vec{G}_t\se(\dl)= \scal{ d\nu|_{\wla(t)}}{\wh{\cH}_{t*}\dl} \vPhi-(\well_1).  
\end{equation}
Let us  now assume that $\ker (\pi\wh\cH_t)_{*}|_{T_{\well_1}\Lambda_1}=0 $ for  some  $t\in[\wtau_1,\wtau_2]$.

Set $\lambda(t) :=\cG_{t*}\dl$, for some $\dl\in T_{\well_1}\Lambda_1$. Thanks to \eqref{eq: tangent split}, 
there exist a unique $\dl_S\in T_{\well_1}S^-$ and a unique $a\in \mathbb{R}$ 
such that $\dl=\dl_S+a\vPhi-(\well_1)$. From equation \eqref{eq: vec G''}, we obtain that $\lambda(t)=\dl_S+
\mu(t) \vPhi-(\ell_1)$, for some real function $\mu(\cdot)$ satisfying $\mu(\wtau_1)=a$.

In particular, this implies that 
\begin{equation}
\cK_{t*} \dl = \cK_{t*} \dl_S +a \vPhi- (\wla(t))  =\wh{\cH}_{t*}\big( \dl_S+\mu(t) \vPhi-(\well_1)\big).  
\end{equation}
Thus, if  $(\pi\cK_{t})_{*} \dl=0$, then $(\pi\wh{\cH}_t)_{*}\big( \dl_S+\mu(t) \vPhi-(\well_1)\big)=0$,
which implies, by hypothesis, that $\dl_S+\mu(t) \vPhi-(\well_1)=0$, that is, $\dl_S=0$ and $\mu(t)= 0$. By \eqref{G secondo} $\mu(t) = a + \int_{\wtau_1}^t \scal{ \ud\nu|_{\wla(s)}}{\wh{\cH}_{s*}\dl}\ud s \equiv a$ since, by construction, $\nu$ is constant along the integral lines of $\vPhi{-}$. Thus $a = 0$, so that claim (2) is proved. 
\hfill $\square$ \medskip

\section{Useful formulas}
In this section, we recall classical formulas of differential geometry that we extensively use throughout the paper.

Let $f,g$ be two vector fields on some manifold $M$. Then, for every $t$ for which $\exp(tf)$ is defined, it holds
\begin{equation} \label{eq: pullback dei campi} 
\frac{d}{dt} \exp(-tf)_* g \circ \exp(tf)=
\exp(-tf)_* [f,g] \circ \exp(tf).
\end{equation}

Let $F,G  \colon T^*M \to \mathbb{R}$ be some Hamiltonian functions, and denote, as usual, with the script their flow from some time $t_0$.
Then 
\begin{equation}
L_{\vec{F}} G=\langle dG,\vec{F}\rangle= \boldsymbol{\sigma}(\vec{F},\vec{G}) , \qquad  
[\vec F,\vec G]
=\overrightarrow{\boldsymbol{\sigma}(\vec F, \vec G)}  
, \qquad 
\cF_{t*}^{-1}\vec{G}|_{\cF_{t}(\ell)}=\overrightarrow{G\circ \cF_t}|_{\ell}.
\label{eq: vettori trasporti}
\end{equation}

\bibliographystyle{alpha}
\bibliography{bibliocompleta}
\end{document}